\numberwithin{equation}{section}
\theoremstyle{plain}
        \newtheorem{theorem}{Theorem}[section]
        \newtheorem{lemma}[theorem]{Lemma}
        \newtheorem{proposition}[theorem]{Proposition}
        \newtheorem{corollary}[theorem]{Corollary}
\theoremstyle{definition}
        \newtheorem{definition}[theorem]{Definition}
        \newtheorem{remark}[theorem]{Remark}
        \newtheorem{example}[theorem]{Example}
\newcommand{\R}{\mathbb{R}}
\newcommand{\Q}{\mathbb{Q}}
\newcommand{\rat}{\mathbb{Q}}
\newcommand{\Z}{\mathbb{Z}}
\newcommand{\calC}{\mathcal{C}}
\newcommand{\calH}{\mathcal{H}}
\newcommand{\calF}{\mathcal{F}}
\newcommand{\calG}{\mathcal{G}}
\newcommand{\calI}{\mathcal{I}}
\newcommand{\ob}{\mathcal{O}}
\newcommand{\calU}{\mathcal{U}}
\newcommand {\codim} {\operatorname{codim}}
\newcommand{\ov}{\overline}
\newcommand{\im}{\mathrm{im}\hspace{1.5pt}}
\newcommand{\Hom}{\mathrm{Hom}}
\newcommand{\Homeo}{\mathrm{Homeo}}
\newcommand{\ft}[2]{\mathrm{ft}_{<#2} #1}
\newcommand{\cft}[2]{\mathrm{ft}_{\geq#2} #1}
\newcommand{\rH}[2]{H_{#2}\left(#1\right)}
\newcommand{\IH}[2]{I^{\ov{p}}H_{#2}\left(#1\right)}
\newcommand{\ICH}[2]{I_{\ov{p}}H^{#2}\left(#1\right)}
\newcommand{\ImH}[2]{IH_{#2}\left(#1\right)}
\newcommand{\IqH}[2]{I^{\ov{q}}H_{#2}\left(#1\right)}
\newcommand{\rrH}[2]{\widetilde{H}_{#2}\left(#1\right)}
\newcommand{\rrC}[2]{\widetilde{H}^{#2}\left(#1\right)}
\newcommand{\tr}[2]{#1_{< #2}}
\newcommand{\ctr}[2]{#1_{\geq #2}}
\newcommand{\HDim}[1]{\mathrm{Hdim} \left( #1\right)}
\newcommand{\rCH}[2]{H^{#2}\left(#1\right)}
\newcommand {\id} {\operatorname{id}}
\newcommand {\real} {\mathbb{R}}
\newcommand {\redH} {\widetilde{H}}
\newcommand {\smlhf} {\ensuremath{\mbox{$\frac{1}{2}$}}}
\newcommand {\Gr} {\operatorname{Gr}}
\newcommand {\pt} {\operatorname{pt}}
\newcommand {\EZ} {\operatorname{EZ}}
\newcommand {\tB} {\widetilde{B}}
\newcommand {\cplx} {\mathbb{C}}
\newcommand {\cp} {\mathbb{CP}}
\newcommand {\ftr} {\operatorname{ft}}
\newcommand {\rk} {\operatorname{rk}}
\newcommand {\intg} {\mathbb{Z}}
\newcommand {\SO} {\operatorname{SO}}
\newcommand {\ip}    {\ensuremath{I^{\overline{p}}}}
\newcommand {\iq}    {\ensuremath{I^{\overline{q}}}}
\begin{document}
\title[]{Intersection Spaces, Equivariant Moore Approximation and the Signature}

\author{Markus Banagl}
\address{Mathematisches Institut, Ruprecht-Karls-Universit\"at Heidelberg, Im Neuenheimer Feld 205,  69120 Heidelberg, Germany }
\email{banagl@mathi.uni-heidelberg.de} 
\thanks{The first author was in part supported by a research grant of the
 Deutsche Forschungsgemeinschaft.}

\author{Bryce Chriestenson}
\address{Department of Mathematics, Western Oregon University, 
  Monmouth OR 97361, USA}
\email{chriestensonb@mail.wou.edu} 
\date{\today}

\keywords{Stratified spaces, pseudomanifolds, intersection homology, Poincar\'e duality,
signature, fiber bundles}
\subjclass[2010]{55N33, 57P10, 55R10, 55R70}

\begin{abstract}
We generalize the first author's construction of intersection spaces to the case of 
stratified pseudomanifolds of stratification depth $1$ 
with twisted link bundles, assuming that each link 
possesses an equivariant Moore approximation for a suitable choice of structure group.
As a by-product, we find new characteristic classes for fiber bundles admitting
such approximations. For trivial bundles and flat bundles whose base has finite fundamental
group these classes vanish. For oriented closed pseudomanifolds, we 
prove that the reduced rational cohomology of the intersection spaces 
satisfies global Poincar\'e duality across complementary perversities if the characteristic classes vanish.
The signature of the intersection spaces agrees with the Novikov signature of
the top stratum.
As an application, these methods yield new results
about the Goresky-MacPherson intersection homology signature of pseudomanifolds. 
We discuss several nontrivial examples, such as the case of flat bundles and symplectic toric manifolds.  
\end{abstract}
\maketitle
\tableofcontents
\bibliographystyle{amsplain}

\section{Introduction}

Classical approaches to Poincar\'e duality on singular spaces are Cheeger's
$L^2$ cohomology with respect to suitable conical metrics on the regular part of the space
(\cite{cheeger1}, \cite{cheeger2}, \cite{cheeger3}),
and Goresky-MacPherson's intersection homology \cite{gm1}, \cite{gm2}, 
depending on a perversity parameter $\bar{p}$.
More recently, the first author has introduced and investigated a different, spatial perspective
on Poincar\'e duality for singular spaces (\cite{BA}).
This approach associates to certain classes of singular spaces $X$ a cell complex
$\ip X$, which depends on a perversity $\bar{p}$ and is called an \emph{intersection space}
of $X$. Intersection spaces are required to be generalized rational geometric Poincar\'e complexes in the
sense that when $X$ is a closed oriented pseudomanifold, there is a Poincar\'e duality isomorphism
$\redH^{i} (\ip X;\rat) \cong \redH_{n-i} (\iq X;\rat)$, where
$n$ is the dimension of $X$, $\bar{p}$ and $\bar{q}$ are complementary perversities in the sense
of intersection homology theory, and $\redH^*, \redH_*$ denote reduced singular (or cellular)
cohomology and homology.

The resulting homology theory $HI_*^{\bar{p}} (X) = H_* (\ip X;\rat)$ and cohomology theory
$HI^*_{\bar{p}} (X) = H^* (\ip X;\rat)$ 
is \emph{not} isomorphic to intersection (co)homology 
$I^{\bar{p}} H_* (X;\rat),$ $I_{\bar{p}} H^* (X;\rat).$ Since its inception, the 
theory $HI^*_{\bar{p}}$ has so far had applications in areas ranging from 
fiber bundle theory and computation of equivariant cohomology (\cite{BAGGD}),
K-theory (\cite[Chapter 2.8]{BA}, \cite{SP}),
algebraic geometry
(smooth deformation of singular varieties (\cite{bmjta}, \cite{bmjs}), perverse sheaves \cite{bbm},
mirror symmetry \cite[Chapter 3.8]{BA}),
to theoretical Physics (\cite[Chapter 3]{BA}, \cite{bbm}).
For example, the approach of intersection spaces makes it straightforward
to define intersection $K$-groups by $K^* (\ip X)$.
These techniques are not accessible to classical intersection cohomology.
There are applications to $L^2$-theory as well:
In \cite{bahu}, for every perversity $\bar{p}$ 
a Hodge theoretic description of the theory $\redH I^*_{\bar{p}} (X;\real)$ is found;
that is, a Riemannian metric on the top stratum (which is in fact a fiberwise scattering metric and thus 
very different from Cheeger's class of
metrics) and a suitable space of $L^2$ harmonic forms with respect to this metric
(the extended weighted $L^2$ harmonic forms for suitable weights) which is
isomorphic to $\redH I^*_{\bar{p}} (X;\real)$.
A de Rham description of $HI^*_{\bar{p}} (X;\real)$ has been given in \cite{Ba-new}
for two-strata spaces whose link bundle is flat with respect to the isometry group 
of the link.

At present, intersection spaces have been constructed for isolated singularities and for
spaces with stratification depth $1$ whose link bundles are a global product, \cite{BA}.
Constructions of $\ip X$ in some depth $2$ situations have been provided in
\cite{bandepth2}. The fundamental idea in all of these constructions
is to replace singularity links by their Moore approximations, a concept 
from homotopy theory Eckmann-Hilton dual to the concept of Postnikov approximations.
In the present paper, we undertake a systematic treatment of twisted link bundles.
Our method is to employ equivariant Moore approximations of links with respect to the
action of a suitable structure group for the link bundle.

Equivariant Moore approximations are introduced in Section \ref{sec.equivmooreapprox}.
On the one hand, the existence of such approximations is obstructed and we
give a discussion of some obstructions. For instance, if $S^{n-1}$ is the fiber sphere of a
linear oriented sphere bundle, then the structure group can be reduced so as to allow
for an equivariant Moore approximation to $S^{n-1}$ of degree $k$, $0<k<n,$
if and only if the Euler class of the sphere bundle
vanishes (Proposition \ref{eulercondition}). If the action of a group $G$ on a space $X$
allows for a $G$-equivariant map $X\to G$, then the existence of a $G$-equivariant
Moore approximation to $X$ of positive degree $k$ implies that the rational homological
dimension of $G$ is at most $k-1$.
On the other hand, we present geometric situations where equivariant Moore approximations
exist.
 If the group acts trivially on a simply connected
CW complex $X$, then a Moore approximation of $X$ exists.
If the group acts cellularly and the cellular boundary operator in degree $k$ vanishes or
is injective, then $X$ has an equivariant Moore approximation.
Furthermore, equivariant Moore approximations exist often for the effective
Hamiltonian torus action of a symplectic toric manifold. For instance, we prove
(Proposition \ref{prop.toricmfddim4}) that $4$-dimensional symplectic toric manifolds
always possess $T^2$-equivariant Moore approximations of any degree.

In Section \ref{fiberwise}, we use equivariant Moore approximations to
construct fiberwise homology truncation and cotruncation. Throughout, we use
homotopy pushouts and review their properties (universal mapping property,
Mayer-Vietoris sequence) in Section \ref{pushout}.
Proposition \ref{relationtoIH} relates the homology of fiberwise (co)truncations
to the intersection homology of the cone bundle of the given bundle.
Of fundamental importance for the later developments is Lemma \ref{lem.flkcgkexact},
which shows how the homology of the total space of a bundle is built up from
the homology of the fiberwise truncation and cotruncation.
In order to prove these facts, we employ a notion of precosheaves together with
an associated local to global technique explained in Section \ref{sec.localtoglobal}.
Proposition \ref{bundle duality} establishes Poincar\'e duality between fiberwise
truncations and complementary fiberwise cotruncations. 

At this point, we discover a new set of \emph{characteristic classes}
\[ \ob_i (\pi,k,l) \subset H^d (E;\rat),~ d=\dim E,~ i=0,1,2,\ldots, \]
defined for fiber bundles $\pi: E\to B$ which
possess degree $k,l$ fiberwise truncations (Definition \ref{def.locdualobs}).
We show that these characteristic classes
vanish if the bundle is a global product (Proposition \ref{prop.globalprodobvan}).
Furthermore, they vanish for flat bundles if the fundamental group of the base is finite
(Theorem \ref{thm.obvanflatfinitefund}). On the other hand, we construct in
Example \ref{expl.obnonzero} a bundle $\pi$ for which $\ob_2 (\pi,2,1)$ does not vanish.
The example shows also that the characteristic classes $\ob_*$ seem to be rather subtle,
since the bundle of the example is such that all the differentials of its Serre
spectral sequence do vanish.

Now the relevance of these characteristic classes vis-\`a-vis Poincar\'e duality is
the following: While, as mentioned above, there is always a Poincar\'e duality
isomorphism between truncation and complementary cotruncation,
this isomorphism is not determined uniquely and may not commute with 
Poincar\'e duality on the given total space $E$.
Proposition \ref{prop.dob0} states that the duality isomorphism in degree $r$ between
fiberwise truncation and cotruncation can be chosen to commute with
Poincar\'e duality on $E$ if and only if $\ob_r (\pi,k,l)$ vanishes. In this case,
the duality isomorphism is uniquely determined by the commutation requirement.
Thus, we refer to the classes $\ob_*$ as \emph{local duality obstructions}, since in the subsequent
application to singular spaces, these classes are localized at the singularities.

The above bundle-theoretic analysis is then applied in Section \ref{sec.intspacespd} 
in constructing intersection
spaces $\ip X$ for Thom-Mather stratified pseudomanifolds of stratification depth 1,
whose link bundles allow for structure groups with equivariant Moore approximations.
The central definition is \ref{IX};
the requisite Thom-Mather theory is reviewed in Section \ref{sec.thommather}.
The main result here, Theorem \ref{JXPD}, establishes generalized Poincar\'e duality
\begin{equation} \label{equ.pdisoipiq}
\redH^r (\ip X;\rat)\cong \redH_{n-r} (\iq X;\rat) 
\end{equation}
for complementary perversity intersection spaces,
provided the local duality obstructions of the link bundle vanish.

In the Sections \ref{sec.ihsign},
\ref{sec.signintsp}, we investigate the signature and Witt element of intersection forms.
We show first that if a Witt space allows for middle-degree equivariant Moore approximation,
then its intersection form on intersection homology agrees with the intersection form
of the top stratum as an element in the Witt group $W(\rat)$ of the rationals
(Corollary \ref{IHsignature}). Section \ref{sec.signintsp} shows that the duality isomorphism
(\ref{equ.pdisoipiq}), where we now use the (lower) middle perversity,
 can in fact be constructed so that the associated middle-degree intersection form
is symmetric when the dimension $n$ is a multiple of $4$. Let $\sigma (IX)$ denote the
signature of this symmetric form. 
Theorem \ref{HIsignature} asserts that $\sigma (IX)=\sigma (M,\partial M),$
where $\sigma (M,\partial M)$ denotes the signature of the top stratum.
In particular then, $\sigma (IX)$ agrees with the intersection homology signature.
For the rather involved proof of this theorem, we build
on the method of Spiegel \cite{SP}, which in turn is partially based
on the methods introduced in the proof of \cite[Theorem 2.28]{BA}.
It follows from all of this that there are interesting global signature obstructions
to fiberwise homology truncation in bundles. For instance,
viewing complex projective space $\cp^2$ as a stratified space with bottom stratum
$\cp^1 \subset \cp^2,$ the signature of $\cp^2$ is $1$, whereas the signature
of the top stratum $D^4$ vanishes. Indeed, the normal circle bundle of $\cp^1,$
i.e. the Hopf bundle, does not have a degree $1$ fiberwise homology truncation,
as can of course be verified directly.

\emph{On notation}: Throughout this paper, all homology and cohomology groups are taken with rational coefficients.
Reduced homology and cohomology will be denoted by $\widetilde{H}_*$ and $\widetilde{H}^*$.
The linear dual of a $K$-vector space $V$ is denoted by $V^\dagger = \Hom (V,K)$.

\section{Properties of Homotopy Pushouts}
\label{pushout}

This paper uses homotopy pushouts in many constructions. We recall here
their definition, as well as the two properties we will need: their universal mapping property and
the associated Mayer-Vietoris sequence.
\begin{definition}\label{pushoutdef}
Given continuous maps $\xymatrix{ Y_1 & X \ar[l]_{f_1} \ar[r]^{f_2} & Y_2}$ between topological spaces we define the 
\emph{homotopy pushout} of $f_1$ and $f_2$ to be the topological space $Y_1 \cup_X Y_2$, the quotient 
of the disjoint union $X\times [0,1] \sqcup Y_1 \sqcup Y_2$ by the smallest equivalence relation generated by  
$$ \left\{\left(x,0\right)\sim f_1\left(x\right)|~ x\in X\right\}\cup\left\{ \left(x,1\right) \sim f_2\left(x\right) |~ x \in X\right\}$$
We denote $\xi_i : Y_i \rightarrow Y_1 \cup_X Y_2$, for $i=1,2$, and 
$\xi_0:X\times I \rightarrow Y_1 \cup_X Y_2$, to be the inclusions into the disjoint union followed by the quotient map,
where $I=[0,1]$.
\end{definition}

\begin{remark}\label{pushoutuniversal}
The homotopy pushout satisfies the following universal mapping property:
Given any topological space $Z$, continuous maps $g_i : Y_i \rightarrow Z$, and homotopy 
  $h:X\times I \rightarrow Z$ satisfying    $h\left(x,i\right) = g_{i+1}\circ f_{i+1}\left(x\right)$ for $x\in X$, and $i=0,1$, 
  then there exists a unique continuous map $g: Y_1 \cup_X Y_2 \rightarrow Z$ such that $g_i = g \circ \xi_i$ for $i=1,2,$
  and $h=g\circ \xi_0$.\\

From the data of a homotopy pushout we get a long exact sequence of homology groups
	\begin{equation}\label{pushoutMV}\xymatrix{ \cdots \ar[r] & \rH{X}{r} \ar[r]^(.375){\left(f_{1 *}, f_{2 *}\right)} & \rH{Y_1}{r}\oplus \rH{Y_2}{r} \ar[r]^(.55){\xi_{1 *} - \xi_{2 *}} & \rH{Y_1 \cup_X Y_2}{r} \ar[r]^(.625)\delta & \cdots}\end{equation}
This is the usual Mayer-Vietoris sequence applied to $Y_1 \cup_X Y_2$ when it is decomposed into the union of 
$(Y_1 \cup_X Y_2) \setminus Y_i$ for $i=1,2$, whose overlap is $X$ crossed with the open interval. 
If $X$ is not empty, then there is also a version for reduced homology:
\begin{equation} \label{pushoutMVred}
\xymatrix{ 
\cdots \ar[r] & \rrH{X}{r} \ar[r]^(.375){\left(f_{1 *}, f_{2 *}\right)} & 
\rrH{Y_1}{r}\oplus \rrH{Y_2}{r} \ar[r]^(.55){\xi_{1 *} - \xi_{2 *}} & \rrH{Y_1 \cup_X Y_2}{r} \ar[r]^(.625)\delta & \cdots}
\end{equation}
\end{remark}

\section{Equivariant Moore Approximation}
\label{sec.equivmooreapprox}

Our method to construct intersection spaces for twisted link bundles rests on the concept
of an equivariant Moore approximation. The transformation group of the general abstract
concept will eventually be a suitable reduction of the structure group of a fiber bundle,
which will enable fiberwise truncation and cotruncation. The basic idea behind degree-$k$
Moore approximations of a space $X$ is to find a space $X_{<k}$, whose homology agrees
with that of $X$ below degree $k$, and vanishes in all other degrees. It is well-known that
Moore-approximations cannot be made functorial on the category of all topological spaces and
continuous maps, as explained in \cite{BA}. The equivariant Moore space problem
was raised in 1960 by Steenrod, who asked whether given a group $G$, a right
$\intg[G]$-module $M$ and an integer $k>1$, there exists a topological space $X$ such that
$\pi_1 (X)=G,$ $H_i (\widetilde{X};\intg)=0,$ $i\not= 0,k,$
$H_0 (\widetilde{X};\intg)=\intg,$ and $H_k (\widetilde{X};\intg)=M,$ where
$\widetilde{X}$ is the universal cover of $X$, equipped with the $G$-action by covering translations.
The first counterexample was due to Gunnar Carlsson, \cite{carlsson}.
Further work on Steenrod's problem has been done by Douglas Anderson,
James Arnold, Peter Kahn, Frank Quinn, and Justin Smith.

\begin{definition}\label{Gspace}
Let $G$ be a topological group.  A \emph{$G$-space} is a pair 
$\left(X,\rho_X\right)$, where $X$ is a topological space and 
$\rho_X:G \rightarrow \Homeo\left(X\right)$ is a continuous group homomorphism. 
A \emph{morphism} between $G$-spaces $f:\left(X,\rho_X\right)\rightarrow \left(Y,\rho_Y\right)$ 
is a continuous map $f:X\rightarrow Y$ that satisfies 
\[ \rho_Y (g)\circ f = f \circ \rho_X (g), \text{ for every } g \in G. \]
We denote the set of morphisms by $\Hom_G (X,Y)$.
Morphisms are also called \emph{$G$-equivariant maps}.
We will write $g\cdot x = \rho_X (g)(x),$ $x\in X,$ $g\in G$.
\end{definition}
Let $cX$ be the closed cone $X\times [0,1] / X\times \left\{0\right\}$.  
If $X$ is a $G$-space, then the cone $cX$ becomes a $G$-space in a natural way: the cone point
is a fixed point and for $t\in (0,1],$
$g\in G$ acts by $g\cdot (x,t) = (g\cdot x,t)$.
More generally, 
given $G$-equivariant maps $\xymatrix{ Y_1 & X \ar[l]_{f_1} \ar[r]^{f_2} & Y_2},$
the homotopy pushout $Y_1 \cup_X Y_2$ is a $G$-space in a natural way.

\begin{definition}\label{EqMooreApx}
Given a $G$-space $X$ and an integer $k\geq 0$, a \emph{$G$-equivariant Moore approximation} 
to $X$ of degree $k$ is a $G$-space $\tr{X}{k}$ together with a continuous $G$-equivariant map 
$\tr{f}{k}: \tr{X}{k}\rightarrow X$, satisfying the following properties:
\begin{itemize}
\item $\rH{\tr{f}{k}}{r} : \rH{\tr{X}{k}}{r}\rightarrow \rH{X}{r}$ is an isomorphism for all $r<k$, and
\item $\rH{\tr{X}{k}}{r} = 0$ for all $r\geq k$.
\end{itemize}
\end{definition}

\begin{definition}
Let $X$ be a nonempty topological space.  The ($\mathbb{Q}$-coefficient) \emph{homological dimension} of $X$ is the number 
$$\HDim{X} = \min\left\{ n \in \Z : \mbox{ $\rH{X}{m} = 0$ for all $m > n$} \right\},$$
if such an $n$ exists.
If no such $n$ exists, then we say that $X$ has infinite homological dimension.
\end{definition}

\begin{example}\label{trivialexamples}
There are two extreme cases, in which equivariant Moore approximations are trivial to construct.
For $k=0$, any Moore approximation must satisfy $\rH{\tr{X}{0}}{i} = 0$, for all $i\geq 0$.  
This forces $X_{<0} = \emptyset$, and $\tr{f}{0}$ is the empty function.  If $X$ has $\HDim{X} = n$, then for $k\geq n+1$ set $\tr{X}{k} = X$ and $\tr{f}{k} = \id_X$.  Hence, any space of homological dimension $n$ has an equivariant Moore approximation of degrees $k\leq 0$ and $k>n$.
\end{example}

\begin{example}
If $G$ acts trivially on a simply connected CW complex $X$, then Moore approximations of $X$ exist in
every degree.
For spatial homology truncation in the nonequivariant case, see Chapter 1 of \cite{BA}, which also
contains a discussion of functoriality issues arising in connection with Moore approximations. The simple connectivity
condition is sufficient, but far from necessary.
\end{example}

\begin{example}
Let $G$ be a compact Lie group acting smoothly on a smooth manifold $X$.
Then, according to \cite{IL},
one can arrange a CW structure on $X$ in such a way that $G$ acts cellularly.
Now suppose that $X$ is any $G$-space equipped with a CW structure such that $G$ acts cellularly.
If the $k$-th boundary operator $\partial_k: C_k (X)\to C_{k-1} (X)$ in the cellular chain complex of 
$X$ vanishes, then the $(k-1)$-skeleton of $X$, together with its inclusion into $X$ and endowed 
with the restricted $G$-action, is an
equivariant Moore-approximation $\tr{X}{k} = X^{k-1}$.
This condition is for example satisfied for the standard minimal CW structure on complex projective
spaces and tori. However, in order to make a given action cellular, one may of course be forced
to endow spaces with larger, nonminimal, CW structures.
Similarly,
if $\partial_k$ is injective, then $\tr{X}{k} = X^{k}$
is an equivariant Moore-approximation.
\end{example}

The following observation can sometimes be used to show that certain $G$-spaces and degrees do not
allow for an equivariant Moore approximation.
\begin{proposition}\label{homobstruction}
Let $G$ be a topological group and $X$ a nonempty $G$-space.  Let $G_\lambda$ be the $G$-space $G$ 
with the action by left translation.  If 
$$\Hom_G \left( X , G_\lambda \right) \neq \emptyset$$
and $X$ has a $G$-equivariant Moore approximation of degree $k>0$, then
$$k-1 \geq \HDim{G}.$$
\end{proposition}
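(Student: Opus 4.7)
The plan is to exploit the $G$-equivariant map $\phi \colon X \to G_\lambda$ together with orbit maps in order to factor a homeomorphism $G \to G_\lambda$ through $\tr{X}{k}$. This will force the relevant piece of $H_*(G)$ to embed into $H_*(\tr{X}{k})$, and the vanishing property of a Moore approximation will then cut off $H_*(G)$ above degree $k-1$.

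First, I would check that $\tr{X}{k}$ is nonempty. Since $k>0$, the defining property of a Moore approximation gives $H_0(\tr{X}{k}) \cong H_0(X)$, and $X$ being nonempty forces $H_0(X) \ne 0$, so $\tr{X}{k}$ is nonempty as well. Pick any $\tilde{x}_0 \in \tr{X}{k}$, set $x_0 = \tr{f}{k}(\tilde{x}_0) \in X$, and pick an equivariant map $\phi \in \Hom_G(X, G_\lambda)$. Define the orbit map $\alpha \colon G \to \tr{X}{k}$ by $\alpha(g) = g \cdot \tilde{x}_0$; this is continuous because the $G$-action on $\tr{X}{k}$ is. The key computation is that the composition
\[
G \xrightarrow{\;\alpha\;} \tr{X}{k} \xrightarrow{\;\tr{f}{k}\;} X \xrightarrow{\;\phi\;} G_\lambda
\]
sends $g$ to $\phi(g \cdot x_0) = g \cdot \phi(x_0) = g \, \phi(x_0)$, using $G$-equivariance of $\tr{f}{k}$ and $\phi$ together with the fact that the $G$-action on $G_\lambda$ is left translation. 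Hence the composition is right translation by $\phi(x_0)$, which is a homeomorphism of $G$.

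Applying $H_r(-)$, the composition becomes an isomorphism, so $\alpha_* \colon H_r(G) \to H_r(\tr{X}{k})$ is injective for every $r$. Since $H_r(\tr{X}{k}) = 0$ for $r \geq k$, this forces $H_r(G) = 0$ for all $r \geq k$, i.e.\ $\HDim{G} \leq k-1$, as claimed. There is no real obstacle here: the only subtlety is recognizing that equivariance plus transitivity of the left-translation action on $G_\lambda$ turns any orbit map $G \to X$ into a section (up to translation) of $\phi$, so that $G$ sits homologically inside $\tr{X}{k}$.
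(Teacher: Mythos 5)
Your proof is correct and follows essentially the same route as the paper: both use the orbit map $g \mapsto g\cdot \tilde{x}_0$ into $\tr{X}{k}$ composed with the equivariant map to $G_\lambda$ to split $H_*(G)$ off of $H_*(\tr{X}{k})$, which vanishes in degrees $\geq k$. The only cosmetic difference is that the paper normalizes the basepoint so that the composite is $\id_G$, whereas you observe it is a right translation, hence a homeomorphism --- an equally valid way to get injectivity on homology.
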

\begin{proof}
Let $\tr{f}{k}: \tr{X}{k} \to X$ be a $G$-equivariant Moore approximation, $k>0$.
Precomposition with $\tr{f}{k}$ induces a map 
$$\tr{f}{k}^\sharp : \Hom_G\left(X,G_\lambda\right)\rightarrow \Hom_G\left(\tr{X}{k},G_\lambda\right).$$
As $k>0$ and $X$ is not empty, we have
$H_0 (\tr{X}{k})\cong H_0 (X)\not= 0$. Thus $\tr{X}{k}$ is not empty.
For each $\phi \in \Hom_G \left(\tr{X}{k},G_\lambda\right)$, we note that $\phi$ is surjective 
since $\tr{X}{k}$ is not empty, 
left translation is transitive and $\phi$ is equivariant.  Choose $x\in \tr{X}{k}$ such that $\phi\left(x\right)= e $.  Define $h_x : G \rightarrow \tr{X}{k}$ by $h_x\left(g\right)= g\cdot x$.  Then $\phi\circ h_x = \id_G$, since 
	$$\phi\left(h_x\left(g\right)\right)= \phi\left(g\cdot x\right) = g\phi\left(x\right) = ge = g.$$
Therefore the map induced by $\phi$ on homology has a splitting induced by $h_x$, so there is an isomorphism
	$$\rH{\tr{X}{k}}{r}\cong A_r \oplus \rH{G}{r}$$
for some subgroup $A_r\subset \rH{\tr{X}{k}}{r}$ and every $r$.  Since by definition $\rH{\tr{X}{k}}{r}=0$ for $r\geq k$, 
then if such a $\phi$ exists we must have $\HDim{G} \leq k-1$.  
The condition $\Hom_G\left(X,G_\lambda\right) \neq \emptyset$ is sufficient to guarantee the existence of such a $\phi$.
\end{proof}

\begin{example}
By Proposition \ref{homobstruction}, the action of $S^1$ on itself by rotation does 
not have an equivariant Moore space approximation of degree $1$.

Consider $S^1$ acting on $X=S^1\times S^2$ by rotation in the first coordinate and trivially in the second coordinate. 
Example \ref{trivialexamples} shows that for $k\leq 0$ and $k\geq 4$, $S^1$-equivariant Moore approximations
exist trivially. By Proposition \ref{homobstruction}, there is no such approximation for $k=1$. 
We shall now construct an approximation for degree $k=2$.
Fix a point $y_0\in S^2$.  let $i:S^1\rightarrow X,$ $\theta \mapsto \left(\theta,y_0\right),$ be the inclusion at $y_0$.  
Let $S^1$ act on itself by rotation, then the map $i$ is equivariant.  
Furthermore, by the K\"unneth theorem we know that $\rH{X}{1}\cong \Q$ is generated 
by the class $[S^1 \times y_0]$, and $\rH{i}{1}:\rH{S^1}{1}\rightarrow \rH{X}{1}$ is an isomorphism 
taking $[S^1]$ to $[S^1\times y_0]$.  Thus since both $S^1$ and $X$ are connected, we have that the map $i$ gives a $S^1$-equivariant Moore space approximation of degree $2$.
\end{example}

Further positive results asserting the existence of Moore approximations in geometric situations
such as symplectic toric manifolds are discussed in Section \ref{sec.sphsymptorman}.

\section{Precosheaves and Local to Global Techniques}
\label{sec.localtoglobal}

The material of this section is fairly standard; 
we include it in order to fix terminology and notation.
Let $B$ be a topological space and let $VS_{\Q}$ denote the category of rational vector spaces and linear maps.
\begin{definition}
A covariant functor $\calF: \tau B \rightarrow VS_\Q$ from the category $\tau B$ of open sets on $B$, 
with inclusions for morphisms, to the category $VS_\Q$, is called a \emph{precosheaf} on $B$.  
For open sets $U\subset V\subset B$ we denote the result of applying $\calF$ to the inclusion map $U\subset V$ 
by $i_{U,V}^{\calF}:\calF\left(U\right)\rightarrow \calF\left(V\right)$.
A \emph{morphism} $f:\calF \rightarrow \calG$ of precosheaves on $B$ is a natural transformation of functors.
\end{definition}

Let $\calU=\left\{ U_\alpha \right\}_{\alpha \in \Lambda}$ be an open cover of $B$, and let $\tau\calU$ be the 
category whose objects are unions of finite intersections of open sets in $\calU$ and 
whose morphisms are inclusions.  There is a natural inclusion functor $u:\tau\calU \rightarrow \tau B,$ 
regarding an open set in $\tau \calU$ as an object of $\tau B$.  
This realizes $\tau \calU$ as a full subcategory of $\tau B$.

\begin{definition}\label{Ulocallyconstant}
A precosheaf $\calF$ on $B$ is \emph{$\calU$-locally constant} if for any $U_\alpha \in \calU$ and any $U$ 
which is a finite intersection of elements of $\calU$ and intersects $U_\alpha$ nontrivially, the map 
$$i_{U_\alpha \cap U, U_\alpha}^\calF:\calF\left(U_\alpha  \cap U \right) \rightarrow \calF\left(U_\alpha\right)$$
is an isomorphism.
\end{definition}

Consider the product category $\tau \calU \times \tau \calU$ whose objects are 
pairs $\left(U,V\right)$ with $U,V \in \tau \calU$, and whose morphism are pairs of 
inclusions $\left(U,V\right)\rightarrow \left(U',V'\right)$ given by $U\subset U'$ and $V\subset V'$.  
Define the functors $\cap,\cup : \tau \calU \times \tau \calU \rightarrow \tau \calU$ that take the 
object $\left(U,V\right)$ to  $U\cap V$ and $U\cup V$, respectively, and the morphism 
$\left(U,V\right)\rightarrow \left(U',V'\right)$ to the inclusions $U\cap V \subset U'\cap V'$ and 
$U\cup V \subset U'\cup V'$.  Similarly we have the projection functors 
$p_i:\tau \calU \times \tau \calU \rightarrow \tau \calU$, for $i=1,2$ where $p_i$ projects 
onto the $i$-th factor.  The inclusions $U,V \subset U\cup V$ and $U\cap V \subset U,V$ 
induce natural transformations of functors $j_i : p_i \to \cup$, and $\iota_i:\cap \to p_i$ for $i=1,2$.  
Applying a precosheaf $\calF$ to the $j_i (U,V)$, we obtain linear maps
$\calF (U)\to \calF (U\cup V),$ $\calF (V)\to \calF (U\cup V),$ which we will again denote
by $j_1, j_2$ (rather than $\calF (j_i (U,V))$). Similarly for the $\iota_i$.
Thus for any precosheaf $\calF$ on $B$ we have the morphisms 
$$\xymatrix{ \calF\left(U\cap V\right)\ar[r]^{\left(\iota_1,\iota_2\right)}& 
\calF\left(U\right)\oplus\calF\left(V\right)\ar[r]^{j_1-j_2} & \calF\left(U\cup V\right)}$$
for any object $\left(U,V\right)$ in $\tau \calU \times \tau \calU$.
The functoriality of $\calF$ implies 
that $\left(j_1 - j_2\right)\circ \left(\iota_1,\iota_2\right) = 0$.

Any morphism of precosheaves $f:\calF \rightarrow \calG$ gives a commutative diagram
\begin{equation} \label{f commutes}
\xymatrix{ \calF\left(U\cap V\right)\ar[r]^{\left(\iota_1,\iota_2\right)} \ar[d]^{f\left(U\cap V\right)}& \calF\left(U\right)\oplus\calF\left(V\right)\ar[r]^{j_1-j_2} \ar[d]^{f\left(U\right)\oplus f\left(V\right)}& 
\calF\left(U\cup V\right) \ar[d]^{f\left(U\cup V\right)}\\ \calG\left(U\cap V\right)\ar[r]^{\left(\iota_1,\iota_2\right)}& \calG\left(U\right)\oplus\calG\left(V\right)\ar[r]^{j_1-j_2} & \calG\left(U\cup V\right).}
\end{equation}

\begin{definition}\label{UMV}
Let $\calF_r$ be a collection of precosheaves on $B$, for $r\geq 0$, and let $\calU$ be an open cover of $B$.  We say that the sequence $\calF_r$ satisfies the \emph{$\calU$-Mayer-Vietoris property} 
if there is a natural transformation of functors on $\tau\calU\times \tau \calU$, 
$$\delta_i^\calF : \calF_{i} \circ \cup \longrightarrow \calF_{i-1}\circ \cap,$$
for each $i$, such that for every pair of open sets $U,V \in \tau \calU$ the following sequence is exact:
$$\xymatrix@C=20pt{
 \ar[r] & \calF_{i+1}\left(U\cup V\right) \ar[r]^{\delta_{i+1}^{\calF}} & 
\calF_i\left(U\cap V\right) \ar[r]^(.45){\left(\iota_1^i,\iota_2^i\right)} & 
\calF_i \left(U\right) \oplus \calF_i \left(V\right) \ar[r]^(.55){j_1^i-j_2^i} & 
\calF_i\left(U\cup V\right) \ar[r]^(.65){\delta_i^{\calF}} &. }$$
A collection of morphisms $f_r :\calF_r \rightarrow \calG_r$, for $r\geq 0$, is called \emph{$\delta$-compatible} 
if for each pair of open sets $U,V\in \tau \calU$ the following diagram commutes for all $i\geq 0$:
\begin{equation}\label{f compatible}\xymatrix@C=50pt{
 \calF_{i+1}\left(U\cup V\right) \ar[r]^(.525){\delta_{i+1}^{\calF}\left(U,V\right)} 
  \ar[d]_{f_{i+1}\left(U\cup V\right)} & 
  \calF_i\left(U\cap V\right)  \ar[d]^{f_{i}\left(U\cap V\right)} \\   
\calG_{i+1}\left(U\cup V\right) \ar[r]^(.525){\delta_{i+1}^{\calG}\left(U,V\right)} & \calG_i\left(U\cap V\right). } 
\end{equation}
\end{definition}

\begin{proposition}\label{local to global}
Let $B$ be a compact topological space and
let $\calU$ be an open cover of $B$.  Let $f_i : \calF_i \rightarrow \calG_i$ be a sequence of $\delta$-compatible morphisms between $\calU$-locally constant precosheaves on $B$ that satisfy the $\calU$-Mayer-Vietoris property.  If $f_i\left(U\right):\calF_i\left(U\right)\rightarrow \calG_i\left(U\right)$ is an isomorphism for every $U \in \calU$ and for every $i\geq 0$, then $f_i\left(B\right):\calF_i\left(B\right)\rightarrow \calG_i\left(B\right)$ is an isomorphism for all $i\geq 0$.
\end{proposition}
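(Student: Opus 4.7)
The plan is a Mayer--Vietoris/compactness argument carried out in two stages. First, I show that $f_i(W)$ is an isomorphism for every nonempty finite intersection $W$ of elements of $\calU$: writing $W = U_{\alpha_1}\cap\cdots\cap U_{\alpha_m}$ with $m\geq 1$, the $\calU$-locally constant hypothesis (applied with $U = U_{\alpha_2}\cap\cdots\cap U_{\alpha_m}$, or trivially when $m=1$) forces both $i_{W,U_{\alpha_1}}^{\calF}: \calF_i(W)\to \calF_i(U_{\alpha_1})$ and the corresponding map for $\calG$ to be isomorphisms, since $W$ meets $U_{\alpha_1}$ nontrivially. Naturality of $f_i$ combined with the hypothesis that $f_i(U_{\alpha_1})$ is an isomorphism then forces $f_i(W)$ to be an isomorphism for every $i$.

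Next, I induct on the number $n$ of summands in a decomposition $W = V_1\cup\cdots\cup V_n\in \tau\calU$ as a union of nonempty finite intersections of $\calU$-elements. The base case $n=1$ is handled by the preceding step. For the inductive step, set $U = V_1\cup\cdots\cup V_n$ and $V = V_{n+1}$, and use the distributive identity
\[ U\cap V = \bigcup_{j=1}^{n}(V_j\cap V_{n+1}) \]
to express $U\cap V$ as a union of at most $n$ (possibly empty) finite intersections of elements of $\calU$. After discarding empty terms and fixing the convention $\calF(\emptyset) = \calG(\emptyset) = 0$, the induction hypothesis applies to both $U$ and $U\cap V$, while $f_i(V)$ is covered by Step 1. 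The $\calU$-Mayer--Vietoris property together with the $\delta$-compatibility of $\{f_i\}$ assembles into a commutative ladder between the long exact Mayer--Vietoris sequences of $\calF$ and $\calG$ associated with $(U,V)$; the Five Lemma, applied using the established isomorphisms $f_i(U\cap V)$, $f_i(U)\oplus f_i(V)$, $f_{i-1}(U\cap V)$, and $f_{i-1}(U)\oplus f_{i-1}(V)$, forces $f_i(U\cup V) = f_i(W)$ to be an isomorphism for every $i$.

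Finally, compactness of $B$ yields a finite subcover $B = U_{\alpha_1}\cup\cdots\cup U_{\alpha_N}$ with each $U_{\alpha_j}\in \calU$, so $B$ itself lies in $\tau\calU$; applying the induction of the previous paragraph to this finite union completes the proof. The main obstacle is really just bookkeeping in the inductive step: one must verify that $U\cap V$ genuinely decomposes into at most $n$ nonempty finite intersections so that the induction on $n$ is well-founded, and one must consistently adopt the normalization $\calF(\emptyset) = \calG(\emptyset) = 0$ so that the Mayer--Vietoris ladder continues to make sense in degenerate situations where intersections become empty.
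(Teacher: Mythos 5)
Your argument is correct and is essentially the paper's own proof: both proceed by induction on the number $n$ of pieces in a presentation of an element of $\tau\calU$ as a union of finite intersections of members of $\calU$, settle the base case by combining $\calU$-local constancy with the hypothesis on $f_i(U)$ for $U\in\calU$, run the $\delta$-compatible Mayer--Vietoris ladder through the five lemma for the pair ``first $n$ pieces, last piece'' (the paper removes one piece $U_j$ and uses $U^j$ and $V^j=U^j\cap U_j$, which is the same distributivity trick), and conclude by compactness. The only point to flag is your ``convention'' $\calF(\emptyset)=\calG(\emptyset)=0$: the precosheaves are given, so their value on $\emptyset$ cannot be normalized away, and when $U\cap V=\emptyset$ the five lemma needs $f_i(\emptyset)$ to be an isomorphism, which the stated hypotheses do not provide (local constancy is only imposed for nonempty intersections); indeed, for a cover by two disjoint open sets one can cook up $\calU$-locally constant precosheaves satisfying the Mayer--Vietoris property for which the conclusion fails, so some assumption such as $\calF_i(\emptyset)=0=\calG_i(\emptyset)$ is genuinely needed there. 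The paper's proof passes over this degenerate case just as silently (its $V^j$ may be empty), and it is harmless in all of the paper's applications, where the precosheaves are homology-type and vanish on $\emptyset$; so treat this as a standing hypothesis to be recorded rather than a convention to be adopted, and with that proviso your proof is complete.
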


\begin{proof}
We shall prove the following statement by induction on $n$:
For every $U\in \tau \calU$ which can be written as a union
$U= U_1 \cup \cdots \cup U_n$ of $n$ open sets $U_j \in \tau \calU,$
each of which is a finite intersection of open sets in $\calU,$
the map $f_i (U): \calF_i (U)\to \calG_i (U)$ is an isomorphism for all $i\geq 0$.
The base case $n=1$ follows from the fact that $\calF_i, \calG_i$ are $\calU$-locally constant together with
the assumption on $f_i (U)$ for $U\in \calU$.  
Denote $U^j = U_1\cup \cdots \cup \hat{U_j}\cup \cdots \cup U_n$ and 
$V^j = \left(U_1 \cap U_j \right) \cdots \cup \hat{U_j}\cup \cdots \cup \left( U_n \cap U_j\right)$; then 
$U=U^j \cup U_j$ and $V^j = U^j \cap U_j$.  Since the $f_i$ are $\delta$-compatible, by 
\eqref{f compatible} and \eqref{f commutes} we have the commutative diagram below, whose rows are the $\calU$-Mayer-Vietoris sequences associated to the pair $U^j$ and $U_j$:
	$$\xymatrix{ \ar[r] & \calF_i \left( V^j\right) \ar[r] \ar[d]^{f_i\left(V^j\right)}& 
  \calF_i\left(U^j\right)\oplus\calF_i\left(U_j\right) \ar[r] \ar[d]^{f_i\left(U^j\right)\oplus f_i\left(U_j\right)} & 
  \calF_i\left(U\right) \ar[r]^\delta \ar[d]^{f_i\left(U\right)} & 
  \calF_{i-1}\left(V^j\right) \ar[r]\ar[d]^{f_{i-1}\left(V^j\right)} & \\ 
 \ar[r] & \calG_i \left( V^j\right) \ar[r] & \calG_i\left(U^j\right)\oplus\calG_i\left(U_j\right) \ar[r] & 
  \calG_i\left(U\right) \ar[r]^\delta &\calG_{i-1}\left(V^j\right) \ar[r] &.}$$
Each of $V^j, U^j,$ and $U_j$ is a union of less than $n$ open sets, each of which is a finite intersection of elements of $\calU$.  
Thus by induction hypothesis, $f_i (V^j),$ $f_i (U^j)$ and $f_i (U_j)$ are isomorphisms for all $i$.
By the $5$-lemma, $f_i (U)$ is an isomorphism for all $i$, which concludes the induction step.
Since $B$ is compact, there is a finite number of open sets in $\calU$ which cover $B$. Thus the induction yields the desired result.
\end{proof}

\section{Examples of Precosheaves}

Throughout this section we consider a fiber bundle $\pi:E\rightarrow B$ with fiber $L$ and 
topological structure group $G$.  
We assume that $B,E$, and $L$ are compact oriented manifolds such that $E$ is compatibly oriented with respect
to the orientation of $B$ and $L$.  Set $n=\dim E$, $b=\dim B$ and $c=\dim L= n-b$.  We may form the fiberwise 
cone of this bundle, $DE$, by defining $DE$ to be the homotopy pushout, Definition \ref{pushoutdef}, of the pair of maps 
$\xymatrix{ B & E \ar[l]_{\pi} \ar[r]^{\id} & E.}$  By Remark \ref{pushoutuniversal}, the map $\pi$ induces a map 
$\pi_D : DE \rightarrow B$, given by $\id_B$ on $B$ and $(x,t)\mapsto \pi (x)$ for
$(x,t)\in E\times I$.  
This makes $DE$ into a fiber bundle whose fiber is $cL$, the cone on $L$, and whose 
structure group is $G$.  We point out, for $U\subset B$ open, that $\pi_D^{-1} U \rightarrow U$ is obtained as the 
homotopy pushout of the pair of maps $\xymatrix{ U & \pi^{-1} U \ar[l]_{\pi|_{\pi^{-1} U}} \ar[r]^{\id} & \pi^{-1} U }$.  
One more fact 
that will be needed is that the pair  $\left(DE,E\right)$, where $E$ is identified with $E\times \left\{1\right\} \subset DE$, 
along with a stratification of $DE$ given by $B\subset DE,$ is a compact $\Q$-oriented $\partial$-stratified 
topological pseudomanifold, 
in the sense of Friedman and McClure \cite{FM}.  Here we have identified $B$ with the image $\sigma\left(B\right)$ of the 
``zero section'' $\sigma:B\rightarrow DE,$ 
sending $x\in B$ to the cone point of $cL$ over $x$.
Similarly for any open $U\subset B,$ the 
pair $\left(\pi^{-1}_D U, \pi^{-1} U\right)$ is a $\Q$-oriented $\partial$-stratified 
pseudomanifold, though it will not be compact 
unless $U$ is compact. We write $\partial \pi^{-1}_D U = \pi^{-1} U.$

\begin{example}\label{singularhomology}
For each $r\geq 0,$ there are precosheaves $\pi_*\calH_r$ on $B$ defined by
	$$U\mapsto \rH{\pi^{-1}\left(U\right)}{r}.$$
By the Eilenberg-Steenrod axioms, these are $\calU$-locally constant, and satisfy the $\calU$-Mayer-Vietoris 
property for any good open cover $\calU$ of $B$.  
(An open cover $\calU$ of a $b$-dimensional manifold is \emph{good}, if every nonempty finite intersection of sets in
$\calU$ is homeomorphic to $\real^b$. Such a cover exists if the manifold is smooth or PL.)

Let $\pi':E' \rightarrow B$ be another fiber bundle, and $f:E \rightarrow E'$ a morphism of fiber bundles.  Then $f$ induces a morphism of precosheaves $f_* : \pi_* \calH_r \rightarrow \pi'_* \calH_r$, given on any open set $U\subset B$ by
$$f_* (U) := \left( f|_{\pi^{-1}U}\right)_*: 
    \rH{\pi^{-1}U}{r}\rightarrow \rH{\pi'^{-1}U}{r}.$$
Furthermore, for any pair of open sets $U,V \subset B,$ we have the following commutative diagram whose rows are exact Mayer-Vietoris sequences:
\begin{equation}\label{Bundle MV} 
\xymatrix@C=14pt{ \ar[r] &\rH{\pi^{-1} (U\cap V)}{r} \ar[r] \ar[d]^{f_r\left(U\cap V\right)}& 
\rH{\pi^{-1}U}{r}\oplus \rH{\pi^{-1}V}{r} \ar[r] \ar[d]^{f_r\left(U\right)\oplus f_r\left(V\right)}& 
\rH{\pi^{-1}(U\cup V)}{r} \ar[r]^>>>>\delta \ar[d]^{f_r\left(U\cup V\right)}& \\
\ar[r] & \rH{\pi'^{-1} (U\cap V)}{r} \ar[r] & 
\rH{\pi'^{-1}U}{r}\oplus \rH{\pi'^{-1}V}{r} \ar[r] & \rH{\pi'^{-1}(U\cup V)}{r} \ar[r]^>>>>\delta & }
\end{equation}
Thus, for any good open cover $\calU,$ the map $f$ induces a $\delta$-compatible sequence of morphisms between 
precosheaves which satisfy the $\calU$-Mayer-Vietoris property, and are $\calU$-locally constant.
\end{example}

\begin{example}\label{intersectionhomology}
Define the precosheaf of intersection homology groups, $\pi_{D*} \calI^{\ov{p}} \calH_r$ for each $r\geq0$, and 
each perversity $\ov{p}$, by assigning to the open set $U\subset B$ the vector space, $\IH{\pi_{D}^{-1} U}{r}$. 
We use the definition of intersection homology via finite 
singular chains as in \cite{FM}.  This is a slightly more general definition than that of King,\cite{K}, 
and Kirwan-Woolf \cite{KW}.  For our situation the definitions all agree with the exception 
that the former allows for more 
general perversities, see the comment after Prop. 2.3 in \cite{FM} for more details.  In Section $4.6$ of 
Kirwan-Woolf \cite{KW} it is shown that each $\pi_{D*} \calI^{\ov{p}} \calH_r$ is a precosheaf for each $r\geq 0$, and 
that this sequence satisfies the $\calU$-Mayer-Vietoris property for any open cover $\calU$ of $B$.  Furthermore, these 
are all $\calU$-locally constant for any good cover $\calU$ of $B$.  

Let $f:E \to E'$ be a bundle morphism with $\dim E \geq \dim E'$.
Using the levelwise map $E\times I \to E\times I,$ $(e,t)\mapsto (f(e),t),$ and the identity map on $B$,
$f$ induces a bundle morphism $f_D: DE \to DE'$. Recall that a continuous map between stratified spaces
is called \emph{stratum-preserving} if the image of every pure stratum of the source 
is contained in a single pure stratum of the target. A stratum-preserving map $g$ is called \emph{placid}
if $\codim g^{-1} (S)\geq \codim S$ for every pure stratum $S$ of the target. Placid maps induce covariantly
linear maps on intersection homology (which is not true for arbitrary continuous maps).
The map $f_D$ is indeed stratum-preserving and, since $\dim E \geq \dim E'$, placid and thus
induces maps
\[ (f_D|_{\pi^{-1}_D (U)})_*: \IH{\pi_{D}^{-1} U}{r} \longrightarrow \IH{{\pi'_D}^{-1} U}{r} \]
for each open set $U\subset B$.
This way, we obtain a sequence of $\delta$-compatible morphisms 
$f_{D*}: \pi_{D*} \calI^{\ov{p}} \calH_r \rightarrow \pi'_{D*} \calI^{\ov{p}} \calH_r$.

With $I^{\ov{p}} C_* (X)$ the singular rational intersection chain complex as in \cite{FM},
we define intersection cochains by $I_{\ov{p}} C^* (X) = \Hom (I^{\ov{p}} C_* (X), \rat)$ and 
intersection cohomology by $I_{\ov{p}} H^* (X) = H^* (I_{\ov{p}} C^* (X))$. Then the universal coefficient
theorem
\[ I_{\ov{p}} H^* (X) \cong \Hom(I^{\ov{p}} H_* (X),\rat) \]
holds.
Theorem 7.10 of \cite{FM} establishes Poincar\'e-Lefschetz duality for compact $\Q$-oriented $n$-dimensional $\partial$-stratified pseudomanifolds $\left(X,\partial X\right)$. Some important facts are established there in the proof:
\begin{enumerate}
\item For complementary perversities $\ov{p}+\ov{q} = \ov{t}$, there is a commutative diagram whose rows are exact:
\begin{equation}\label{boundary PD}
\xymatrix{ \ar[r]^<<<<<<{j_{\partial}^r}  & 
\ICH{X}{r} \ar[r]^{i_{\partial}^r} \ar[d]^{D_{X}^r}_{\cong} & 
\ICH{\partial X}{r} \ar[r]^{\delta^r_{\partial}} \ar[d]^{D^r_{\partial X}}_{\cong} & 
\ICH{X,\partial X}{r+1} \ar[r] \ar[d]^{D_{n-r-1}^{X}}_{\cong} & \\
					 \ar[r]^<<<<<<{j^{\partial}_{n-r}} & \IqH{X,\partial X}{n-r} \ar[r]^{\delta^{\partial}_{n-r}} & 
\IqH{\partial X}{n-r-1} \ar[r]^{i_{n-r-1}^{\partial}} &	\IqH{X}{n-r-1} \ar[r] & }
\end{equation}

\item The inclusion $X\setminus \partial X\rightarrow X$ induces an isomorphism
	\begin{equation}\label{openclosed} \IqH{X\setminus \partial X}{n-r} \cong \IqH{X}{n-r}.\end{equation}
\end{enumerate}

Consider the smooth oriented $c$-dimensional manifold $L$.  
The closed cone $cL$ is a compact $\Q$-oriented $(c+1)$-dimensional $\partial$-stratified pseudomanifold.  
Thus the long exact sequence coming from the bottom row of diagram \eqref{boundary PD} gives
\begin{equation}\label{closed cone}
\xymatrix{ \ar[r] & \IH{cL,L}{r+1}\ar[r]^(.575){\delta_{r+1}^{\partial}} & \IH{L}{r} \ar[r]^{i^{\partial}_r} & 
\IH{cL}{r} \ar[r]^(.45){j_r^{\partial}} & \IH{cL,L}{r} \ar[r] & }.
\end{equation}
\end{example}

\begin{proposition}\label{Cone Formula}
Let $\ov{p}$ be a perversity and let $k = c - \ov{p}\left(c+1\right)$.  
Then for the maps in the exact sequence \eqref{closed cone} we have an isomorphism
	$$i^{\partial}_r :\rH{L}{r} \rightarrow \IH{cL}{r}\!,$$
when $r<k$, and an isomorphism
	$$\delta^{\partial}_{r+1} : \IH{cL,L}{r+1} \rightarrow \rH{L}{r}\!,$$
when $r\geq k$.
\end{proposition}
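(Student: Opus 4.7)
The plan is to reduce to the standard cone formula in intersection homology and then read the two claims off of the exact sequence \eqref{closed cone}. First, since $L$ is a smooth oriented manifold, intersection homology coincides with ordinary singular homology, so $\IH{L}{r} = \rH{L}{r}$ for every perversity and every $r$; this identification is already implicit in the statement (the map $i^{\partial}_r$ has source $\rH{L}{r}$).

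The essential input is the cone formula for the closed cone on $L$: with $k = c - \ov{p}(c+1)$,
\begin{equation*}
\IH{cL}{r} \cong \begin{cases} \rH{L}{r}, & r < k, \\ 0, & r \geq k, \end{cases}
\end{equation*}
where in the low range the isomorphism is induced by the inclusion $L = L\times \{1\} \hookrightarrow cL$, that is, it is precisely the map $i^{\partial}_r$ appearing in \eqref{closed cone}. I would cite this directly from \cite{FM} (whose chain-complex conventions are in force here) or from \cite{KW}. Conceptually, any $\ov{p}$-allowable $r$-cycle in $cL$ meeting the cone point can be coned off to an allowable $(r+1)$-chain exactly when the allowability inequality $r+1-(c+1) \leq \ov{p}(c+1)$ holds, i.e.\ when $r \geq k$; for $r < k$ the coning is blocked by allowability and one instead represents the class by a cycle avoiding the cone point, hence lying in $L$.

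Granting the cone formula, both assertions drop out of \eqref{closed cone}. For $r < k$ the map $i^{\partial}_r : \rH{L}{r} \to \IH{cL}{r}$ is the cone-formula isomorphism by construction, so it is an isomorphism. For $r \geq k$, both $\IH{cL}{r}$ and $\IH{cL}{r+1}$ vanish (since $r+1 > k$), so the relevant fragment of \eqref{closed cone} collapses to
\begin{equation*}
0 \longrightarrow \IH{cL,L}{r+1} \xrightarrow{\;\delta^{\partial}_{r+1}\;} \rH{L}{r} \longrightarrow 0,
\end{equation*}
forcing $\delta^{\partial}_{r+1}$ to be an isomorphism.

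The only mildly delicate point is confirming that the low-range isomorphism in the cone formula is literally the inclusion-induced map $i^{\partial}_r$, not merely some abstract isomorphism of the same source and target; this is, however, transparent from the standard proof, where the forward map is built from the inclusion $L \hookrightarrow cL$ at the chain level, so no work beyond quoting the reference is required.
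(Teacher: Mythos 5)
Your proposal is correct and follows essentially the same route as the paper: quote the standard cone formula (inclusion-induced isomorphism $\rH{L}{r}\cong\IH{cL}{r}$ for $r<k$, vanishing for $r\geq k$), note via \eqref{openclosed} that it applies to the closed cone, and read off the second isomorphism from the vanishing of the adjacent terms in the exact sequence \eqref{closed cone}. No gaps; the remark that the low-range isomorphism is literally $i^{\partial}_r$ is exactly the point the paper also relies on.
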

\begin{proof}
The standard cone formula for intersection homology asserts that for a closed
$c$-dimensional manifold $L$, the inclusion $L\hookrightarrow cL$ as the boundary induces an isomorphism
$\IH{L}{r} \cong \IH{cL}{r}$ for $r<c-\ov{p}(c+1)$, whereas
$\IH{cL}{r}=0$ for $r\geq c-\ov{p}(c+1).$
(By (\ref{openclosed}) above, this holds both for the closed and the open cone.)
This already establishes the first claim. The second one follows from the cone formula together
with the exact sequence (\ref{closed cone}).
\end{proof}

\section{Fiberwise Truncation and Cotruncation}
\label{fiberwise}

Let $\pi: E\rightarrow B$ be a fiber bundle of closed manifolds with closed manifold fiber $L$ and structure group $G$
such that $B,E$ and $L$ are compatibly oriented.  
Suppose that a $G$-equivariant Moore approximation $L_{<k}$ of degree $k$ exists for the fiber $L$.
The bundle $E$ has an underlying principal $G$-bundle $E_P \to B$
such that $E=E_P \times_G L$. Using the $G$-action on $\tr{L}{k}$, we set
\[ \ft{E}{k} = E_P \times_G \tr{L}{k}. \]
Then $\ft{E}{k}$ is the total space of a fiber bundle
$\tr{\pi}{k}:\ft{E}{k}\rightarrow B$ with fiber $\tr{L}{k},$ structure group $G$ and
underlying principal bundle $E_P$.
The equivariant structure map $\tr{f}{k}:\tr{L}{k} \to L$ defines a morphism of bundles 
\[ \tr{F}{k}:\ft{E}{k}=E_P \times_G \tr{L}{k}\rightarrow E_P \times_G L =E. \]
\begin{definition}
The pair $\left(\ft{E}{k},\tr{F}{k}\right)$ is called the \emph{fiberwise $k$-truncation} of the bundle $E$. 
\end{definition}

\begin{definition}\label{cotruncationdef}
The \emph{fiberwise $k$-cotruncation} $\cft{E}{k}$ is the homotopy pushout of the pair of maps 
$$\xymatrix{ B & \ft{E}{k} \ar[l]_{\tr{\pi}{k}} \ar[r]^{\tr{F}{k}} & E}.$$
Let $\ctr{c}{k}:E\rightarrow \cft{E}{k}$, and $\sigma:B\rightarrow \cft{E}{k}$ be the maps $\xi_2$ and $\xi_1,$ respectively, 
appearing in Definition \ref{pushoutdef}.
\end{definition}

Since $\tr{F}{k}$ satisfies $\tr{\pi}{k} = \pi \circ \tr{F}{k}$ we have, 
by the universal property of Remark \ref{pushoutuniversal}, using the constant homotopy, 
a unique map $\ctr{\pi}{k}:\cft{E}{k}\rightarrow B$ satisfying 
$\pi = \ctr{\pi}{k}\circ \ctr{c}{k}$, $\ctr{\pi}{k} \circ \sigma = \id_B$
and $(\pi_{\geq k} \circ \xi_0)(x,t)=\pi_{<k} (x)$ for all $t\in I$, where
$\xi_0: \ft{E}{k} \times I \to \cft{E}{k}$ is induced by the inclusion
(as in Definition \ref{pushoutdef}).
The map $\ctr{\pi}{k}:\cft{E}{k}\rightarrow B$ is a fiber bundle projection 
with fiber the homotopy pushout of
$$\xymatrix{ \star & L_{<k} \ar[l] \ar[r]^{\tr{f}{k}} & L},$$
i.e. the mapping cone of $\tr{f}{k}$.
Note that this mapping cone is a $G$-space in a natural way (with $\star$
as a fixed point), since $\tr{f}{k}$ is equivariant.
The map
$\ctr{c}{k}:E\rightarrow \cft{E}{k}$ is a morphism of fiber bundles.  
Furthermore, the bundle $\pi_{\geq k}$ has a canonical section $\sigma$, sending
$x\in B$ to $\star$ over $x$.

\begin{definition}\label{PandQdef}
Define the space $\ctr{Q}{k}E$ to be the homotopy pushout of the pair of maps 
	$$\xymatrix{ \star & B \ar[l] \ar[r]^{\sigma} & \cft{E}{k} }.$$
This is the mapping cone of $\sigma$ and hence
\[ \redH_* (\ctr{Q}{k}E) \cong H_* (\cft{E}{k},B), \]
where we identified $B$ with its image under 
the embedding $\sigma$.
Define the maps $\ctr{\xi}{k}:\cft{E}{k}\rightarrow \ctr{Q}{k}E$ and 
$[c] : \star \rightarrow \ctr{Q}{k}E$ to be the maps $\xi_2$ and $\xi_1,$ respectively 
(Definition \ref{pushoutdef}). Set
$$C_{\geq k} = \ctr{\xi}{k}\circ \ctr{c}{k}: E \rightarrow \ctr{Q}{k}E.$$
\end{definition}

For each open set $U\subset B,$ the space $\ctr{\pi}{k}^{-1}U$ is the pushout of the 
pair of maps $\xymatrix{ U & \pi_{<k}^{-1} U \ar[l]_{\pi_{<k}|} \ar[r]^{F_{<k}|} & \pi^{-1}U }$
and the restrictions of $c_{\geq k}$ induce a morphism of fiber bundles
$c_{\geq k}(U):\pi^{-1} U \to \pi^{-1}_{\geq k} U$.
Define the precosheaf $\pi_*^Q \calH_r$ by the assignment 
$U \mapsto H_r (\pi^{-1}_{\geq k} U,U)$ (again identifying $U$ with its image under $\sigma$).
That this assignment is indeed a precosheaf follows from the functoriality of homology applied to the
commutative diagram of inclusions
\[ \xymatrix{
(\pi^{-1}_{\geq k} U, U) \ar[r] \ar[rd] & (\pi^{-1}_{\geq k} V, V) \ar[d] \\
& (\pi^{-1}_{\geq k} W,W)
} \]
associated to nested open sets $U\subset V\subset W$.
The maps $C^k_r (U): H_r (\pi^{-1} U)\to H_r (\pi^{-1}_{\geq k} U,U),$ given by the composition
\[ H_r (\pi^{-1} U) \stackrel{c_{\geq k}(U)_*}{\longrightarrow}     
  H_r (\pi^{-1}_{\geq k} U) \longrightarrow H_r (\pi^{-1}_{\geq k} U,U),  \]
define a morphism of precosheaves 
	$$\calC^k_r : \pi_*\calH_r \rightarrow \pi_*^Q \calH_r$$
for all $r\geq 0$.
The following lemma justifies the terminology ``cotruncation''. 
\begin{lemma}  \label{lem.ckrforrbtimesl}
For $U\cong \real^b,$ the map $C^k_r (U)$ 
is an isomorphism for $r\geq k,$ while
$H_r (\pi^{-1}_{\geq k} U,U)=0$ for $r<k$.
\end{lemma}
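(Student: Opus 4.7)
The plan is to trivialize everything over the contractible base $U$ and identify the fiberwise cotruncation with the product of $U$ and the mapping cone of $\tr{f}{k}:\tr{L}{k}\to L$; the claims then follow immediately from the mapping cone long exact sequence and the defining properties of a Moore approximation. Since $U\cong\real^b$ is contractible, the restriction of the underlying principal $G$-bundle $E_P|_U$ is trivializable, and a chosen trivialization yields compatible identifications $\pi^{-1}U\cong U\times L$ and $\tr{\pi}{k}^{-1}U\cong U\times \tr{L}{k}$ under which $\tr{F}{k}|$ becomes $\mathrm{id}_U\times \tr{f}{k}$ and $\tr{\pi}{k}|$ becomes the projection.

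Using the explicit description of the homotopy pushout in Definition \ref{pushoutdef}, one sees that the pushout of the diagram $U\leftarrow U\times \tr{L}{k}\xrightarrow{\mathrm{id}\times \tr{f}{k}}U\times L$ is, as a quotient of the disjoint union, literally $U\times \cone{\tr{f}{k}}$, where $\cone{\tr{f}{k}}$ denotes the homotopy pushout of $\star\leftarrow \tr{L}{k}\xrightarrow{\tr{f}{k}}L$, i.e.\ the mapping cone of $\tr{f}{k}$. Under this identification the section $\sigma$ restricts to $u\mapsto(u,\star)$, where $\star$ is the cone point, and the restriction of $\ctr{c}{k}$ to $\pi^{-1}U$ is $\mathrm{id}_U\times \iota$ for the canonical inclusion $\iota:L\hookrightarrow \cone{\tr{f}{k}}$. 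From the K\"unneth theorem (using contractibility of $U$) together with excision one obtains
\[
H_r\bigl(\ctr{\pi}{k}^{-1}U,\,U\bigr)\;\cong\;H_r\bigl(U\times \cone{\tr{f}{k}},\,U\times\{\star\}\bigr)\;\cong\;\rrH{\cone{\tr{f}{k}}}{r},
\]
and $C^k_r(U)$ is identified, modulo $\rH{\pi^{-1}U}{r}\cong \rH{L}{r}$, with the inclusion-induced map $\iota_\ast:\rH{L}{r}\to\rrH{\cone{\tr{f}{k}}}{r}$.

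It then suffices to consult the mapping cone long exact sequence
\[
\cdots\to\rH{\tr{L}{k}}{r}\xrightarrow{\rH{\tr{f}{k}}{r}}\rH{L}{r}\xrightarrow{\iota_\ast}\rrH{\cone{\tr{f}{k}}}{r}\to\rH{\tr{L}{k}}{r-1}\to\cdots
\]
and to feed in the defining properties of Definition \ref{EqMooreApx}. For $r<k$ both $\rH{\tr{f}{k}}{r}$ and $\rH{\tr{f}{k}}{r-1}$ are isomorphisms, so exactness forces $\rrH{\cone{\tr{f}{k}}}{r}=0$, which is the vanishing claim. For $r\geq k$ the group $\rH{\tr{L}{k}}{r}$ vanishes; moreover $\rH{\tr{L}{k}}{r-1}$ either vanishes (when $r>k$) or injects into $\rH{L}{r-1}$ via the isomorphism $\rH{\tr{f}{k}}{k-1}$ (when $r=k$). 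In either case exactness forces $\iota_\ast$ to be an isomorphism, which is the claim for $C^k_r(U)$.

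The only genuine technical point is the first step: one has to verify that the fiberwise pushout defining $\cft{E}{k}$ via the associated-bundle construction $E_P\times_G(-)$ commutes with restriction and trivialization over $U$, so that choosing compatible trivializations of $E$ and $\ft{E}{k}$ from a single trivialization of $E_P|_U$ turns $\tr{F}{k}|$ into $\mathrm{id}_U\times \tr{f}{k}$ on the nose. Once this is in place, the homeomorphism between the pushout over $U$ and $U\times \cone{\tr{f}{k}}$ is a direct check on equivalence classes from Definition \ref{pushoutdef}.
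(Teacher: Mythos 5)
Your proposal is correct and follows essentially the same route as the paper's proof: trivialize both bundles compatibly over the contractible $U$, identify $C^k_r(U)$ with the fiber-level map $H_r(L)\to\rrH{\cone{\tr{f}{k}}}{r}$, and conclude from the mapping cone long exact sequence together with the defining properties of the Moore approximation. The only difference is that you spell out the identification of the restricted pushout with $U\times\cone{\tr{f}{k}}$, which the paper treats as immediate.
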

\begin{proof}
Let $L_{\geq k}$ denote the mapping cone of $f_{<k}: L_{<k} \to L$.
Since the bundles $\pi$ and $\pi_{\geq k}$ both (compatibly) trivialize over $U\cong \real^b$, the
map $C^k_r (U)$ can be identified with the composition
\[ H_r (\real^b \times L) \longrightarrow
  H_r (\real^b \times L_{\geq k}) \longrightarrow H_r (\real^b \times (L_{\geq k},\star)),  \]
which can further be identified with
\[ H_r (L) \longrightarrow \redH_r (L_{\geq k}). \]
This map fits into a long exact sequence
\[ H_r (L_{<k}) \stackrel{f_{<k*}}{\longrightarrow} H_r (L)
 \longrightarrow \redH_r (L_{\geq k}) \longrightarrow H_{r-1} (L_{<k}). \]
The result then follows from the defining properties of the Moore approximation $f_{<k}$.
\end{proof}

As in Example \ref{singularhomology}, the map 
$\tr{F}{k,r}:\rH{\ft{E}{k}}{r}\rightarrow \rH{E}{r}$ is $\tr{\calF}{k,r}\left(B\right)$ for the 
morphism of precosheaves $\tr{\calF}{k,r}:\pi_{<k *} \calH_r\rightarrow \pi_*\calH_r$ 
given by $F_{<k}|_*: H_r (\pi^{-1}_{<k} U)\to H_r (\pi^{-1} U)$
for each $r\geq 0$.

For each open set $U$ we have the long exact sequence of perversity $\ov{p}$-intersection homology groups 
	\begin{equation}\label{disk local}
\xymatrix{\cdots \ar[r] & \IH{\pi^{-1}_D U,\partial \pi^{-1}_D U}{r+1} 
\ar[r]^(.675){\delta_{r+1}^{\partial}\left(U\right)} & 
\rH{\pi^{-1}U}{r}\ar[r]^(.45){i^{\partial}_r\left(U\right)} & 
\IH{\pi^{-1}_D U}{r} \ar[r]^(.675){j^{\partial}_r\left(U\right)} \ar[r] & 
\cdots}
\end{equation}
(Recall that $\pi_D: DE\to B$ is the projection of the cone bundle.)  
When $U$ varies, this exact sequence forms a
precosheaf of acyclic chain complexes.  
In particular the morphisms $i^{\partial}_r$ and $\delta^{\partial}_{r+1}$ are morphisms of precosheaves for every $r\geq 0$. From now on, in order to have good open covers, we assume that $B$ is either smooth or at least PL.

\begin{proposition}\label{relationtoIH}
Fix a perversity $\ov{p}$.  Let $n-1=\dim E$, $b=\dim B$, $c=n-b-1$, and $k=c - \ov{p}\left(c+1\right)$.  
Assume that $B$ is compact and that an equivariant Moore approximation 
$\tr{f}{k}:\tr{L}{k}\rightarrow L$ to $L$ of degree $k$ exists. Then the compositions 
	$$i^{\partial}_r\left(B\right) \circ \tr{F}{k *} : \rH{\ft{E}{k}}{r}\rightarrow \IH{DE}{r}$$
and
	$$C^k_r \circ \delta^{\partial}_{r+1}\left(B\right): \IH{DE,E}{r+1}\rightarrow 
              H_r (\cft{E}{k},B)\cong \rrH{\ctr{Q}{k}E}{r}$$
are isomorphisms for all $r\geq 0$.
\end{proposition}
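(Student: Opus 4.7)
The strategy is to apply the local-to-global Proposition \ref{local to global} to both maps, viewing each side as the value on $B$ of a $\calU$-locally constant precosheaf satisfying $\calU$-Mayer-Vietoris (for a good open cover $\calU$ of $B$, which exists since $B$ is smooth or PL), and reducing to verification on trivializing open sets $U\cong \real^b$.

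For the first map, the relevant precosheaves are $U\mapsto H_r(\pi^{-1}_{<k} U)$ and $U\mapsto IH_r(\pi^{-1}_D U)$; both are $\calU$-locally constant and satisfy Mayer-Vietoris by Examples \ref{singularhomology} and \ref{intersectionhomology}. The morphism $\tr{F}{k*}$ is a morphism of precosheaves by Example \ref{singularhomology}, and $i^\partial_r$ is a morphism of precosheaves because the long exact sequence (\ref{disk local}) is natural in $U$; $\delta$-compatibility of the composition follows from the naturality of Mayer-Vietoris and of the pair sequence. For the second map, the source precosheaf $U\mapsto IH_{r+1}(\pi^{-1}_D U,\pi^{-1} U)$ and target precosheaf $U\mapsto H_r(\pi^{-1}_{\geq k} U,U)$ are $\calU$-locally constant and satisfy $\calU$-Mayer-Vietoris, by applying the five-lemma and a braid argument to the long exact sequences of the pairs $(\pi^{-1}_D U,\pi^{-1} U)$ and $(\pi^{-1}_{\geq k} U,U)$ (whose absolute terms satisfy these properties). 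The morphism $\delta^\partial_{r+1}$ is the connecting map of the natural long exact sequence of the pair in intersection homology, and $\calC^k_r$ was constructed as a morphism of precosheaves above Lemma \ref{lem.ckrforrbtimesl}; again $\delta$-compatibility is routine from naturality.

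The local verification on $U\cong \real^b$ uses the bundle trivializations $\pi^{-1}U\simeq U\times L$, $\pi^{-1}_{<k}U\simeq U\times \tr{L}{k}$, $\pi^{-1}_{\geq k}U\simeq U\times L_{\geq k}$, and $\pi^{-1}_D U\simeq U\times cL$, together with homotopy invariance of intersection homology on product stratifications with a Euclidean factor, to reduce everything to the fiber. For the first composition one then gets, for $r<k$, the two-step isomorphism $H_r(\tr{L}{k}) \xrightarrow{\tr{f}{k*}} H_r(L) \xrightarrow{i^\partial_r} IH_r(cL)$, where the first arrow is an iso by the Moore approximation property and the second by Proposition \ref{Cone Formula}, while for $r\geq k$ both ends vanish (by the Moore approximation property and by the cone formula). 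For the second composition, Proposition \ref{Cone Formula} gives $IH_{r+1}(cL,L)=0$ for $r<k$ (from the long exact sequence (\ref{closed cone}) together with the fact that $i^\partial_{r+1}$ is an iso and $i^\partial_r$ is injective below degree $k$), matching the vanishing $\rrH{L_{\geq k}}{r}=0$ from Lemma \ref{lem.ckrforrbtimesl}; for $r\geq k$ one composes the iso $\delta^\partial_{r+1}:IH_{r+1}(cL,L)\to H_r(L)$ of Proposition \ref{Cone Formula} with the iso $H_r(L)\to \rrH{L_{\geq k}}{r}$ supplied by Lemma \ref{lem.ckrforrbtimesl}.

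With local isomorphism, $\delta$-compatibility, $\calU$-local constancy and $\calU$-Mayer-Vietoris in hand, Proposition \ref{local to global} (applicable since $B$ is compact) delivers the isomorphism on $B$. The main obstacle I expect is the careful verification that the relative precosheaf $U\mapsto IH_{r+1}(\pi^{-1}_D U,\pi^{-1} U)$ really does satisfy $\calU$-Mayer-Vietoris with a connecting homomorphism $\delta$-compatible with that of $U\mapsto H_r(\pi^{-1}_{\geq k} U,U)$; this is essentially bookkeeping, but requires combining the Mayer-Vietoris sequences of the absolute pieces via the long exact sequence of pairs, and then checking that the resulting connecting maps intertwine correctly with $\delta^\partial_{r+1}$ and with $\calC^k_r$.
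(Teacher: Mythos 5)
Your proposal is correct and follows essentially the same route as the paper: both arguments set up the two sides as $\calU$-locally constant precosheaves satisfying the $\calU$-Mayer-Vietoris property over a finite good trivializing cover, verify the base case on $U\cong\real^b$ via the Moore approximation property, Proposition \ref{Cone Formula} and (for the second map) Lemma \ref{lem.ckrforrbtimesl}, and then invoke Proposition \ref{local to global}. The only difference is that you spell out the bookkeeping for the relative precosheaves $U\mapsto \IH{\pi_D^{-1}U,\pi^{-1}U}{r+1}$ and $U\mapsto H_r\left(\pi_{\geq k}^{-1}U,U\right)$, which the paper compresses into ``an analogous argument.''
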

\begin{proof}
We use our local to global technique.  
Let $\calU$ be a finite good open cover of $B$ which trivializes $E$. 
The map $\tr{F}{k}$ induces (by restrictions to preimages of open subsets) a map of precosheaves 
as demonstrated in Example \ref{singularhomology}. 
Both $i^{\partial}_r$ and $\tr{F}{k,*}$ are sequences of $\delta$-compatible morphisms of 
$\calU$-locally constant precosheaves that satisfy the $\calU$-Mayer-Vietoris property.  
Let $U\in \calU$, then $\rH{\tr{\pi}{k}^{-1} U}{r} \cong \rH{\tr{L}{k}}{r}$ and 
$\tr{\calF}{k,r}=\tr{f}{k*}$ is an isomorphism in degrees $r<k$ and $0$ in degrees $r\geq k$.  
Likewise by Proposition \ref{Cone Formula}, the map $i^{\partial}_r$ induces an isomorphism 
$\rH{L}{r} \cong \IH{\pi^{-1}_{D}U}{r}$ in degrees $r<k$ and $0$ in degrees $r\geq k$, since
$\pi^{-1}_D U \cong U \times cL\cong \real^b \times cL$, $\IH{\real^b \times cL}{r} \cong \IH{cL}{r}$, 
and we can identify $i^{\partial}_r\left(U\right)$ with $i^\partial_r$ from \eqref{closed cone}. 
Thus, the composition is an isomorphism in every degree.  We can now apply Proposition \ref{local to global} 
to obtain the desired result. 

A analogous argument gives the desired result for the second statement, using
Lemma \ref{lem.ckrforrbtimesl} in conjunction with Proposition \ref{Cone Formula} to establish
the base case.
\end{proof}

It follows from Proposition \ref{relationtoIH} that 
$i^\partial_r (B): H_r (E)\to \IH{DE}{r}$ is surjective for all $r$,
$F_{<k*}: H_r (\ft{E}{k})\to H_r (E)$ is injective for all $r$,
$C^k_r: H_r (E)\to H_r (\cft{E}{k},B)$ is surjective for all $r$, and
$\delta^\partial_{r+1} (B): \IH{DE,E}{r+1} \to H_r (E)$ is injective for all $r$.
We may use the isomorphisms in Proposition \ref{relationtoIH} to identify 
$\rH{\ft{E}{k}}{r}$ with $\IH{DE}{r}$
and $\rrH{\ctr{Q}{k}E}{r}$ with $\IH{DE,E}{r+1}\!$.  In doing so, we may consider the exact sequence 
	\begin{equation}\label{DE,E}\xymatrix{ \ar[r]&\IH{DE,E}{r+1} 
\ar[r]^(.65){\delta^{\partial}_{r+1}} & \rH{E}{r} \ar[r]^(.4){i^{\partial}_r}& 
\IH{DE}{r} \ar[r]^(.65){j^{\partial}_r} & },\end{equation}
and identify $\tr{F}{k,r}$ as a section of $i^{\partial}_r$, and $C^k_r$ as a 
section of $\delta^{\partial}_{r+1}$.  Thus we see that $j^\partial_r = 0$ for every $r\geq 0$, 
and we have a split short exact sequence
	\begin{equation}\label{splitexact}\xymatrix{ 0 \ar[r] & 
\IH{DE,E}{r+1} \ar@/^/[r]^(.625){\delta^{\partial}_{r+1}} & 
\rH{E}{r} \ar@/^/[r]^(.475){i^\partial_r} \ar@/^/[l]^(.375){C^k_r}& 
\IH{DE}{r} \ar[r] \ar@/^/[l]^(.525){\tr{F}{k,r}}& 0.}\end{equation}

\begin{lemma} \label{lem.flkcgkexact}
The sequence
\[ 0\rightarrow 
H_r (\ft{E}{k}) \stackrel{F_{<k,*}}{\longrightarrow}
H_r (E) \stackrel{C_{\geq k,*}}{\longrightarrow}
\redH_r (Q_{\geq k} E)
\rightarrow 0
\]
is exact.
\end{lemma}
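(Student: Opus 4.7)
The plan is to extract the lemma from the split short exact sequence \eqref{splitexact} established in the preceding proposition, after matching up the arrows. Under the identifications $H_r(\ft{E}{k}) \cong \IH{DE}{r}$ and $\redH_r(\ctr{Q}{k}E) \cong H_r(\cft{E}{k},B)$ coming from Proposition \ref{relationtoIH} and Definition \ref{PandQdef}, the map $F_{<k,*}$ in the displayed sequence becomes the splitting of $i^\partial_r$ appearing in \eqref{splitexact}, while $C_{\geq k,*}=(\xi_{\geq k})_*\circ(c_{\geq k})_*$ becomes $C^k_r(B)$. Injectivity of $F_{<k,*}$ and surjectivity of $C_{\geq k,*}$ then follow immediately from Proposition \ref{relationtoIH}, since $i^\partial_r(B)\circ F_{<k,*}$ and $C^k_r(B)\circ \delta^\partial_{r+1}(B)$ are isomorphisms.

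The substantive step is to establish the ``complex property'' $C_{\geq k,*}\circ F_{<k,*}=0$. I would prove the stronger space-level statement that $C_{\geq k}\circ F_{<k}:\ft{E}{k}\to \ctr{Q}{k}E$ is null-homotopic. By Definition \ref{cotruncationdef}, $\cft{E}{k}$ is the homotopy pushout of $\pi_{<k}$ and $F_{<k}$, so the canonical pushout homotopy supplied by $\xi_0$ gives a homotopy $c_{\geq k}\circ F_{<k}\simeq \sigma\circ \pi_{<k}$. By Definition \ref{PandQdef}, $\ctr{Q}{k}E$ is the mapping cone of $\sigma$, so $\xi_{\geq k}\circ \sigma$ is null-homotopic through the cone. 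Concatenating,
\[ C_{\geq k}\circ F_{<k}=\xi_{\geq k}\circ c_{\geq k}\circ F_{<k}\;\simeq\; \xi_{\geq k}\circ \sigma\circ \pi_{<k}\;\simeq\; \mathrm{const}, \]
and the induced map on reduced homology vanishes.

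With injectivity, surjectivity, and the complex property in hand, exactness in the middle is a purely algebraic consequence of the splitting \eqref{splitexact}: any $x\in H_r(E)$ decomposes uniquely as $x=F_{<k,*}(y)+\delta^\partial_{r+1}(z)$, and if $C_{\geq k,*}(x)=0$ then $C^k_r(B)(\delta^\partial_{r+1}(z))=0$, forcing $z=0$ by Proposition \ref{relationtoIH} and hence $x\in\mathrm{im}(F_{<k,*})$. The main obstacle I anticipate is the verification that $C_{\geq k}\circ F_{<k}$ is null-homotopic; everything else is just unwinding the identifications of Proposition \ref{relationtoIH} and routine bookkeeping with the splittings in \eqref{splitexact}.
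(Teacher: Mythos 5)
Your proposal is correct, but it reaches exactness in the middle by a different route than the paper. The paper's proof also takes injectivity of $F_{<k,*}$ and surjectivity of $C_{\geq k,*}$ from Proposition \ref{relationtoIH} and then handles the middle term purely homotopy-theoretically: it identifies $\ctr{Q}{k}E$ with the mapping cone of $F_{<k}$ via compatible collapse equivalences $\operatorname{cone}(F_{<k})\simeq \cft{E}{k}/B \simeq \ctr{Q}{k}E$, so that $\ker C_{\geq k,*}=\im F_{<k,*}$ (both inclusions at once) follows from the standard cofibration sequence of $F_{<k}$, with no further input from intersection homology. You instead prove the space-level null-homotopy $C_{\geq k}\circ F_{<k}\simeq \mathrm{const}$ directly from the two homotopy-pushout structures (the cylinder in $\cft{E}{k}$ gives $c_{\geq k}\circ F_{<k}\simeq\sigma\circ\pi_{<k}$, and the cone on $B$ inside $\ctr{Q}{k}E$ kills $\xi_{\geq k}\circ\sigma$), which is a correct and rather clean observation, and then you obtain the reverse inclusion $\ker C_{\geq k,*}\subseteq\im F_{<k,*}$ algebraically from the decomposition $H_r(E)=\im F_{<k,*}\oplus\im\delta^{\partial}_{r+1}$ furnished by \eqref{splitexact} together with the injectivity of $C^k_r\circ\delta^{\partial}_{r+1}$; since \eqref{splitexact} is established before the lemma, there is no circularity. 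The trade-off: your middle-exactness step leans on the full strength of Proposition \ref{relationtoIH} (hence on the cone formula and the local-to-global argument), whereas the paper's mapping-cone identification makes that step independent of the intersection-homology machinery; on the other hand, your argument makes the vanishing $C_{\geq k,*}\circ F_{<k,*}=0$ visibly geometric rather than a by-product of the identification. The only point you assert without proof is that $C_{\geq k,*}$ corresponds to $C^k_r(B)$ under $\redH_r(\ctr{Q}{k}E)\cong H_r(\cft{E}{k},B)$, i.e.\ that $\xi_{\geq k,*}$ becomes the map to relative homology; this is the standard mapping-cone compatibility (it is precisely what the paper checks with its diagram through $\cft{E}{k}/B$) and deserves a sentence, but it is not a gap.
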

\begin{proof}
Only exactness in the middle remains to be shown.
The standard sequence
\[ \ft{E}{k} \stackrel{F_{<k}}{\longrightarrow} E
 \hookrightarrow \operatorname{cone} (F_{<k}) \]
induces an exact sequence
\begin{equation} \label{equ.coneflk}
H_r (\ft{E}{k}) \stackrel{F_{<k,r}}{\longrightarrow} H_r (E)
 \longrightarrow \redH_r (\operatorname{cone} (F_{<k})). 
\end{equation}
Collapsing appropriate cones yields homotopy equivalences
\[ \operatorname{cone}(F_{<k}) \stackrel{\simeq}{\longrightarrow}
 \cft{E}{k}/B \stackrel{\simeq}{\longleftarrow} Q_{\geq k} E \]
such that the diagram
\[ \xymatrix{
E \ar@{^{(}->}[r] \ar@{^{(}->}[d]_{c_{\geq k}} & 
    \operatorname{cone}(F_{<k}) \ar[r]^{\simeq} & \cft{E}{k}/B \ar@{=}[d] \\
\cft{E}{k} \ar@{^{(}->}[r]^{\xi_{\geq k}} & Q_{\geq k} E \ar[r]^{\simeq} & \cft{E}{k}/B
} \]
commutes (even on the nose, not just up to homotopy).
The induced diagram on homology,
\[ \xymatrix{
H_r (E) \ar[r] \ar[d]_{c_{\geq k*}} & 
    \redH_r (\operatorname{cone}(F_{<k})) \ar[r]^{\cong} & \redH_r (\cft{E}{k}/B) \ar@{=}[d] \\
H_r (\cft{E}{k}) \ar[r]^{\xi_{\geq k*}} & \redH_r (Q_{\geq k} E) \ar[r]^{\cong} & \redH_r (\cft{E}{k}/B),
} \]
shows that the homology kernel of $E\to \operatorname{cone}(F_{<k})$ equals the kernel
of $\xi_{\geq k*} c_{\geq k*} = C_{\geq k*}$, but it also equals the image of $F_{<k,r}$ by
the exactness of (\ref{equ.coneflk}).
\end{proof}

\begin{proposition}\label{bundle duality}
Let $n-1=\dim E$, $b=\dim B$ and $c=n-b-1$.
For complementary perversities $\ov{p}+\ov{q}=\ov{t}$, let 
$k=c-\ov{p}\left(c+1\right)$ and $l=c-\ov{q}\left(c+1\right)$.
Assume that an 
equivariant Moore approximation to $L$ exists of degree $k$ and of degree $l$. Then there is a 
Poincar\'e duality isomorphism 
\[ D_{k,l}: H^r (\ft{E}{k}) \cong \redH_{n-r-1} (Q_{\geq l} E). \]
\end{proposition}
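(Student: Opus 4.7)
The plan is to chain together three isomorphisms already at our disposal. Set $X = DE$, which we already know is a compact $\Q$-oriented $n$-dimensional $\partial$-stratified pseudomanifold with boundary $\partial X = E$ (since $\dim X = b + (c+1) = n$). The roles of $\bar{p}$ and $\bar{q}$ will enter through $k$ on the truncation side and $l$ on the cotruncation side, precisely matching the complementary perversities of intersection homology.

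First, I would invoke Proposition \ref{relationtoIH} twice. Since $k = c - \bar{p}(c+1)$, applying the proposition with perversity $\bar{p}$ gives an isomorphism
\[ H_r(\ft{E}{k}) \;\cong\; \IpH{DE}{r}. \]
Since $l = c - \bar{q}(c+1)$, applying the proposition with perversity $\bar{q}$ (and replacing $r$ by $n-r-1$) yields
\[ \rrH{\ctr{Q}{l}E}{n-r-1} \;\cong\; \IqH{DE,E}{n-r}. \]

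Second, I would dualize the first identification. Since we work over $\Q$, the ordinary universal coefficient theorem gives $H^r(\ft{E}{k}) \cong \Hom(H_r(\ft{E}{k}),\rat)$, and combining with the universal coefficient isomorphism for intersection (co)homology recalled in Example \ref{intersectionhomology}, namely $I_{\bar{p}}H^r(X) \cong \Hom(I^{\bar{p}}H_r(X),\rat)$, we obtain
\[ H^r(\ft{E}{k}) \;\cong\; I_{\bar{p}}H^r(DE). \]

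Third, I would apply Poincar\'e--Lefschetz duality for compact $\Q$-oriented $n$-dimensional $\partial$-stratified pseudomanifolds (Theorem 7.10 of \cite{FM}, whose statement is reproduced as the vertical isomorphism $D_X^r$ in diagram \eqref{boundary PD}) to the pair $(DE, E)$ with complementary perversities $\bar{p} + \bar{q} = \bar{t}$:
\[ D_{DE}^r:\; I_{\bar{p}}H^r(DE) \;\stackrel{\cong}{\longrightarrow}\; I^{\bar{q}}H_{n-r}(DE, E). \]
Composing the three isomorphisms in order produces the required $D_{k,l}: H^r(\ft{E}{k}) \cong \redH_{n-r-1}(Q_{\geq l} E)$.

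The proof is almost entirely bookkeeping, so there is no single hard computational step; the one point demanding care is the matching of indices and perversities. I must check that $k$ really is the correct truncation degree for perversity $\bar{p}$ (which is built into the statement of Proposition \ref{relationtoIH}), that $l$ plays the analogous role for $\bar{q}$, and that the dimensional shift is consistent: $\dim DE = n$, $\dim E = n-1$, so Lefschetz duality sends degree $r$ cohomology of $DE$ to degree $n-r$ relative homology of $(DE, E)$, which Proposition \ref{relationtoIH} then converts to degree $(n-r) - 1 = n-r-1$ reduced homology of $Q_{\geq l} E$, exactly as asserted. A secondary subtlety worth flagging is that this construction produces \emph{some} duality isomorphism $D_{k,l}$, but not a canonical one compatible with Poincar\'e duality on $E$; the obstruction to such compatibility is presumably the object of the local duality classes $\ob_r(\pi,k,l)$ introduced later.
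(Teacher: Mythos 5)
Your proposal is correct and follows essentially the same route as the paper: the paper defines $D_{k,l}$ as the unique isomorphism making the square with $F_{<k}^*\circ i^*$ (the $\rat$-dual of the first isomorphism of Proposition \ref{relationtoIH}), the Friedman--McClure duality $D_{DE}$ for $(DE,E)$, and $C^l_{n-r-1}\circ\delta$ (the second isomorphism of Proposition \ref{relationtoIH} for $\ov{q}$) commute, which is exactly your chain of three isomorphisms with the universal-coefficient dualization made explicit. Your closing remark that this $D_{k,l}$ need not be compatible with duality on $E$, the obstruction being $\ob_r(\pi,k,l)$, is also precisely the point the paper makes afterwards.
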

\begin{proof}
We use the isomorphisms in Proposition \ref{relationtoIH} 
and the Poincar\'e-Lefschetz duality of \cite{FM}, 
as described here in \eqref{boundary PD}, applied to the $\partial$-stratified 
pseudomanifold $\left(DE,E\right)$.
By definition, $D_{k,l}$ is the unique isomorphism such that
\[ \xymatrix@C=40pt{
I_{\bar{p}} H^r (DE) \ar[r]^{F_{<k}^* \circ i^*}_\cong \ar[d]_{\cong}^{D_{DE}} &
  H^r (\ft{E}{k}) \ar@{..>}[d]^{D_{k,l}} \\
I^{\bar{q}} H_{n-r} (DE,E) \ar[r]^{C^l_{n-r-1} \circ \delta}_\cong &
  \redH_{n-r-1} (Q_{\geq l} E)
} \]
commutes.
\end{proof}

It need not be true, however, that the diagram
\begin{equation}\label{goal}
		\xymatrix{ \rCH{E}{r} \ar[r]^{\tr{F}{k}^*} \ar[d]_{D_E}^{\cong} & 
  \rCH{\ft{E}{k}}{r} \ar[d]^{D_{k,l}}_{\cong} \\ 
 \rH{E}{n-r-1} \ar[r]^{\ctr{C}{l *}} & \rrH{\ctr{Q}{l}E}{n-r-1} }
	\end{equation}
commutes, see Example \ref{expl.obnonzero} below. It turns out that there is an obstruction to
the existence of any isomorphism $H^r (\ft{E}{k}) \cong \redH_{n-r-1} (Q_{\geq l} E)$ such that
the diagram (\ref{goal}) commutes.
\begin{definition} \label{def.locdualobs}
Let $k,l$ be two integers.
Given $G$-equivariant Moore approximations 
$f_{<k}: L_{<k}\to L,$ $f_{<l}: L_{<l}\to L,$ the
\emph{local duality obstruction} in degree $i$ is defined to be
\[ \ob_i (\pi, k,l) = \{ C^*_{\geq k} (x)\cup C^*_{\geq l} (y) ~|~ x\in \redH^i (Q_{\geq k} E),~
   y\in \redH^{n-1-i} (Q_{\geq l} E) \} \subset H^{n-1} (E). \]
\end{definition}
Locality of this obstruction refers to the fact that in the context of stratified spaces, the
obstruction arises only near the singularities of the space. Clearly, the definition of $\ob_i (\pi,k,l)$
does not require any smooth or PL structure on $B$ and thus is available
for topological base manifolds.
The obstruction set $\ob_i (\pi,k,l)$ is a cone: If $z =C^*_{\geq k} (x)\cup C^*_{\geq l} (y)$
is in $\ob_i (\pi,k,l)$ then for any $\lambda \in \rat,$
\[ \lambda z = C^*_{\geq k} (\lambda x)\cup C^*_{\geq l} (y) \in \ob_i (\pi,k,l). \]
If $E$ is connected, then $H^{n-1}(E)\cong \rat$ is one-dimensional, so either
$\ob_i (\pi,k,l)=0$ or $\ob_i (\pi,k,l)\cong \rat$.

\begin{proposition}  \label{prop.dob0}
There exists an isomorphism $D: H^r (\ft{E}{k}) \cong \redH_{n-r-1} (Q_{\geq l} E)$ such that
\[
		\xymatrix{ \rCH{E}{r} \ar[r]^{\tr{F}{k}^*} \ar[d]_{D_E}^{\cong} & 
  \rCH{\ft{E}{k}}{r} \ar[d]^{D}_{\cong} \\ 
 \rH{E}{n-r-1} \ar[r]^{\ctr{C}{l *}} & \rrH{\ctr{Q}{l}E}{n-r-1} }
\]
commutes if and only if the local duality obstruction $\ob_r (\pi,k,l)$ vanishes.
In this case, $D$ is uniquely determined by the diagram.
\end{proposition}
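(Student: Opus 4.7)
The plan is to transport the target square through the isomorphisms of Proposition \ref{relationtoIH} into the Poincar\'e-Lefschetz duality diagram \eqref{boundary PD} for the $\partial$-stratified pseudomanifold $(DE,E)$, and then to recognize the leftover obstruction as a cup-product pairing on the closed oriented manifold $E$. Uniqueness is essentially automatic: since $\tr{F}{k,*}$ is injective (as noted after Proposition \ref{relationtoIH}), $\tr{F}{k}^*$ is surjective, and the relation $D \circ \tr{F}{k}^* = \ctr{C}{l*} \circ D_E$ determines $D$ on all of $\rCH{\ft{E}{k}}{r}$.

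Next I would pin down the only possible candidate for $D$. Under the identifications $\rCH{\ft{E}{k}}{r} \cong \ICH{DE}{r}$ and $\rrH{\ctr{Q}{l}E}{n-r-1} \cong \IqH{DE,E}{n-r}$ from Proposition \ref{relationtoIH}, the map $\tr{F}{k}^*$ is a retraction of $i^*$ and $\ctr{C}{l*}$ is a retraction of $\delta^\partial$. Composing the target equation with $i^* \colon \ICH{DE}{r} \hookrightarrow \rCH{E}{r}$ and invoking the left square $D_E \circ i^* = \delta^\partial \circ D_{DE}$ of \eqref{boundary PD} forces $D$ to equal the isomorphism $D_{k,l}$ of Proposition \ref{bundle duality}.

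With $D = D_{k,l}$ fixed, I would decompose
\[ \rCH{E}{r} \;=\; i^*\bigl(\ICH{DE}{r}\bigr) \;\oplus\; C^*_{\geq k}\bigl(\rrC{\ctr{Q}{k}E}{r}\bigr) \]
via the splitting dual to \eqref{splitexact}. On the first summand the big square commutes by the very construction of $D_{k,l}$. On the second summand, $\tr{F}{k}^* \circ C^*_{\geq k} = 0$ — indeed, the composition $\ft{E}{k} \to E \to \ctr{E}{k}$ factors through $B$ and therefore dies in the quotient $\ctr{Q}{k}E$, which gives $C^k_r \circ F_{<k,*} = 0$ and hence the dual identity. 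Thus, for every $x \in \rrC{\ctr{Q}{k}E}{r}$, commutativity reduces to the single condition
\[ \ctr{C}{l*}\bigl(C^*_{\geq k}(x) \cap [E]\bigr) \;=\; 0 \quad \text{in } \rrH{\ctr{Q}{l}E}{n-r-1}. \]

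The most delicate step, and the real heart of the proof, is the translation of this vanishing into a statement in $H^{n-1}(E)$. Pairing with an arbitrary $y \in \rrC{\ctr{Q}{l}E}{n-r-1}$, the projection formula $f_*(f^*(\alpha) \cap c) = \alpha \cap f_*(c)$ applied to $f = \ctr{C}{l}$ together with the standard cup/cap adjunction gives
\[ \bigl\langle y,\; \ctr{C}{l*}(C^*_{\geq k}(x) \cap [E])\bigr\rangle
   \;=\; \bigl\langle C^*_{\geq k}(x) \cup C^*_{\geq l}(y),\; [E]\bigr\rangle. \]
Because $E$ is a closed oriented $(n-1)$-manifold, evaluation against $[E]$ is a perfect pairing on $H^{n-1}(E)$, and Kronecker duality over $\rat$ ensures that vanishing for all $y$ detects the vanishing of $\ctr{C}{l*}(C^*_{\geq k}(x) \cap [E])$ itself. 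Consequently, the reduced commutativity condition is equivalent to $C^*_{\geq k}(x) \cup C^*_{\geq l}(y) = 0$ for every $x$ and $y$, which by Definition \ref{def.locdualobs} is exactly the statement $\ob_r(\pi,k,l) = 0$. This gives both implications and completes the proof.
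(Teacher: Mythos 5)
Your proof is correct, but it follows a genuinely different route from the paper's. The paper's argument is a short piece of linear algebra: since $F^*_{<k}$ and $C_{\geq l\,*}$ are surjective with targets of equal rank, some isomorphism $D$ filling the square exists if and only if $D_E(\ker F^*_{<k})=\ker C_{\geq l\,*}$; by Lemma \ref{lem.flkcgkexact}, $\ker F^*_{<k}=\operatorname{im} C^*_{\geq k}$, so the condition becomes $C_{\geq l\,*}D_E C^*_{\geq k}(x)=0$ for all $x$, which is then rewritten via the universal coefficient theorem as the cup-product condition of Definition \ref{def.locdualobs}; uniqueness is the same lifting argument you give. The paper never mentions $D_{k,l}$ at this point and defers the identification $D=D_{k,l}$ to Proposition \ref{prop.obzerodisdkl}. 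You instead first force $D=D_{k,l}$ by composing with $i^*$ and using the ladder \eqref{boundary PD} together with Proposition \ref{relationtoIH}, then test commutativity on the decomposition $H^r(E)=\operatorname{im} i^*\oplus \operatorname{im} C^*_{\geq k}$, where the first summand is handled by the construction of $D_{k,l}$ and the second reduces (via $F^*_{<k}\circ C^*_{\geq k}=0$) to the same condition $C_{\geq l\,*}\bigl(C^*_{\geq k}(x)\cap[E]\bigr)=0$, which you convert to the obstruction by the projection formula and Kronecker duality. The end condition is identical, and both arguments share the same mild imprecision at the last step (strictly, evaluation against $[E]$ detects vanishing in $H^{n-1}(E)$ componentwise, i.e.\ one should argue component by component or assume $E$ connected, exactly as the paper implicitly does). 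What your route buys is that it proves Proposition \ref{prop.obzerodisdkl} as a byproduct and makes the geometric candidate $D_{k,l}$ explicit, at the cost of invoking Propositions \ref{relationtoIH} and \ref{bundle duality} and the ladder \eqref{boundary PD}, where the paper's kernel-image argument gets by with the surjectivity and rank statements alone.
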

\begin{proof}
We have seen that both $F^*_{<k}$ and $C_{\geq l*}$ are surjective and their respective images
have equal rank. Thus by linear algebra
$D$ exists if and only if $D_E (\ker F^*_{<k}) =  \ker C_{\geq l*}$.
By Lemma \ref{lem.flkcgkexact}, $\ker F^*_{<k} = \im C^*_{\geq k}$.
Thus the condition translates to:
For every $x\in \redH^r (Q_{\geq k} E),$ $C_{\geq l*} D_E C^*_{\geq k} (x) =0$.
Rewriting this entirely cohomologically using the universal coefficient theorem,
this translates further to
\[ C^*_{\geq k} (x)\cup C^*_{\geq l} (y)=0 \]
for all $x,y$.

The uniqueness of $D$ is standard: If $x\in H^r (\ft{E}{k})),$ then
$D(x) = C_{\geq l*} D_E (x'),$ where $x' \in H^r (E)$ is any element with $F^*_{<k} (x')=x$.
By the condition on the kernels, this is independent of the choice of $x'$.
\end{proof}

\begin{proposition} \label{prop.obzerodisdkl}
If $\ob_i (\pi,k,l)=0,$ then the unique $D$ given by Proposition \ref{prop.dob0} equals the $D_{k,l}$
constructed in Proposition \ref{bundle duality}.
\end{proposition}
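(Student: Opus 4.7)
The plan is to verify that $D_{k,l}$ itself satisfies the characterizing identity of $D$ given in Proposition~\ref{prop.dob0}, namely
\[ D_{k,l}\circ \tr{F}{k}^{*} = \ctr{C}{l *}\circ D_E. \]
The uniqueness clause of Proposition~\ref{prop.dob0} will then force $D=D_{k,l}$. Dualizing the exact sequence of Lemma~\ref{lem.flkcgkexact} gives a short exact sequence
\[ 0 \to \rrC{\ctr{Q}{k}E}{r} \stackrel{\ctr{C}{k}^{*}}{\longrightarrow} \rCH{E}{r} \stackrel{\tr{F}{k}^{*}}{\longrightarrow} \rCH{\ft{E}{k}}{r} \to 0, \]
in which $\ker(\tr{F}{k}^{*}) = \im(\ctr{C}{k}^{*})$; moreover, since $\tr{F}{k}^{*}\circ i^{*}$ is an isomorphism, $\im(i^{*})$ is a linear complement to this kernel in $\rCH{E}{r}$. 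It therefore suffices to check the desired identity separately on each of the two summands $\im(i^{*})$ and $\im(\ctr{C}{k}^{*})$.

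On $\im(\ctr{C}{k}^{*})$ the left-hand side $D_{k,l}\circ \tr{F}{k}^{*}$ vanishes by construction, while the right-hand side $\ctr{C}{l *}\circ D_E$ vanishes precisely by the obstruction-vanishing argument used in the proof of Proposition~\ref{prop.dob0}: for any $x \in \rrC{\ctr{Q}{k}E}{r}$ and any $y \in \rrC{\ctr{Q}{l}E}{n-r-1}$, the pairing identity
\[ \langle \ctr{C}{l}^{*}(y),\; D_E(\ctr{C}{k}^{*}(x))\rangle = \langle \ctr{C}{k}^{*}(x)\cup \ctr{C}{l}^{*}(y),\; [E]\rangle \]
expresses the left-hand pairing in terms of an element of $\ob_r(\pi,k,l)$, which is zero by hypothesis, whence $\ctr{C}{l *}(D_E(\ctr{C}{k}^{*}(x))) = 0$ by the universal coefficient theorem.

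On $\im(i^{*})$, write a general element as $\alpha = i^{*}(\beta)$ with $\beta \in \ICH{DE}{r}$. The defining square of $D_{k,l}$ from Proposition~\ref{bundle duality} gives
\[ D_{k,l}\bigl(\tr{F}{k}^{*}(i^{*}(\beta))\bigr) = C^{l}_{n-r-1}\bigl(\delta^{\partial}_{n-r}(D_{DE}(\beta))\bigr), \]
while the middle square of the Poincar\'e--Lefschetz duality diagram \eqref{boundary PD} applied to the $\partial$-stratified pseudomanifold $(DE,E)$ reads $D_E\circ i^{*} = \delta^{\partial}_{n-r}\circ D_{DE}$. Applying $\ctr{C}{l *}$, which at the level of $B$ coincides with $C^{l}_{n-r-1}$ under the identification $H_{n-r-1}(\cft{E}{l},B)\cong \rrH{\ctr{Q}{l}E}{n-r-1}$, yields
\[ \ctr{C}{l *}\bigl(D_E(i^{*}(\beta))\bigr) = C^{l}_{n-r-1}\bigl(\delta^{\partial}_{n-r}(D_{DE}(\beta))\bigr) = D_{k,l}\bigl(\tr{F}{k}^{*}(i^{*}(\beta))\bigr). \]
Hence the two maps agree on $\im(i^{*})$, and therefore on all of $\rCH{E}{r}$; uniqueness gives $D=D_{k,l}$. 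No step poses a real obstacle: the argument is a short diagram chase that merely assembles the definition of $D_{k,l}$, the standard compatibility between Poincar\'e--Lefschetz duality and the connecting homomorphism, and the obstruction calculation already recorded in Proposition~\ref{prop.dob0}.
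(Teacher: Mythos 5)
Your proof is correct, but it runs in the opposite direction from the paper's. The paper takes the $D$ furnished by Proposition \ref{prop.dob0}, concatenates the ladder square $D_E\circ i^* = \delta\circ D_{DE}$ from \eqref{boundary PD} with the square defining $D$, observes that the horizontal composites $F_{<k}^*\circ i^*$ and $C^l_{n-r-1}\circ\delta$ are exactly the isomorphisms in the defining diagram of $D_{k,l}$, and concludes $D=D_{k,l}$ from the \emph{uniqueness of $D_{k,l}$} — a two-line diagram concatenation that only needs commutativity over the image of $i^*$. You instead verify that $D_{k,l}$ satisfies the characterizing identity $D_{k,l}\circ F_{<k}^* = C_{\geq l*}\circ D_E$ of Proposition \ref{prop.dob0} on all of $H^r(E)$, and invoke the \emph{uniqueness of $D$}. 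This forces you to introduce the decomposition $H^r(E)=\im(i^*)\oplus\im(C_{\geq k}^*)$ (correctly justified via the dual of Lemma \ref{lem.flkcgkexact} and the isomorphism $F_{<k}^*\circ i^*$) and to re-run the obstruction-vanishing/universal-coefficient computation on the summand $\im(C_{\geq k}^*)$, which the paper's route gets for free since it never needs the identity off $\im(i^*)$. The trade-off: your argument is longer, but it directly establishes that diagram \eqref{goal} commutes with $D_{k,l}$ in place of $D$ whenever $\ob_r(\pi,k,l)=0$, which is precisely the geometric point emphasized in the discussion following the proposition; the paper's argument is more economical but leaves that commutativity as a consequence rather than exhibiting it. All the individual steps you use — the ladder square applied to $(DE,E)$, the defining square of $D_{k,l}$, the identification of $C_{\geq l*}$ with $C^l_{n-r-1}$, and the pairing translation of the obstruction — are sound and consistent with the paper's conventions.
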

\begin{proof}
This follows from the diagram
\[ \xymatrix{
I_{\ov{p}} H^r (DE) \ar[r]^{i^*} \ar[d]_{D_{DE}}^\cong & H^r (E) \ar[r]^{F^*_{<k}} \ar[d]_{D_E}^\cong 
  & H^r (\operatorname{ft}_{<k} E) \ar[d]^D_\cong \\
I^{\ov{q}} H_{n-r} (DE,E) \ar[r]^{\delta}  & H_{n-r-1} (E) \ar[r]^{C_{\geq l*}}  
  & \redH_{n-r-1} (Q_{\geq l} E). 
} \]
The left hand square is part of the commutative ladder (\ref{boundary PD}).
The right hand square commutes by the construction of $D$.
Since the horizontal compositions are isomorphisms, $D=D_{k,l}$.
\end{proof}
Although superficially simple, this proposition has rather interesting geometric ramifications:
Since $D_{k,l}$ can \emph{always} be defined, even when the duality obstruction is not zero,
the proposition implies that in such a case, diagram (\ref{goal}) cannot commute.
This means that $D_{k,l}$ is not always a geometrically ``correct'' duality isomorphism,
and the duality obstructions govern when it is and when it is not.

It was already shown in \cite[Section 2.9]{BA} that if the link bundle is a global product, then
Poincar\'e duality holds for the corresponding intersection spaces. This suggests that the
duality obstruction vanishes for a global product. We shall now verify this directly:
\begin{proposition} \label{prop.globalprodobvan}
For complementary perversities $\ov{p}+\ov{q}=\ov{t}$, let 
$k=c-\ov{p}\left(c+1\right)$ and $l=c-\ov{q}\left(c+1\right)$.
If $\pi: E=B\times L \to B$ is a global product, then $\ob_i (\pi,k,l)=0$ for all $i$.
\end{proposition}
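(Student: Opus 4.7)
The plan is to reduce the claim to a simple degree count on $L$ after identifying all spaces and maps Künneth-theoretically in the product case. The key observation will be that for complementary perversities, the cotruncation degrees $k$ and $l$ satisfy $k+l = c+1$, which exceeds the dimension of $L$.

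First, I would make explicit the spaces. For a product bundle $E = B \times L$ the principal bundle $E_P$ is trivial, so $\ft{E}{k} = B \times \tr{L}{k}$ and $\cft{E}{k}$ is the homotopy pushout of $B \leftarrow B \times \tr{L}{k} \to B \times L$, which is canonically homotopy equivalent to $B \times L_{\geq k}$, where $L_{\geq k} := \operatorname{cone}(\tr{f}{k})$. Under this identification the section $\sigma$ becomes $b \mapsto (b,\star)$ for $\star$ the cone point, so $\ctr{Q}{k}E \simeq (B \times L_{\geq k})/(B\times\{\star\})$ and the relative Künneth theorem gives
\[ \redH^*(\ctr{Q}{k}E) \;\cong\; H^*(B \times L_{\geq k},\,B\times\{\star\}) \;\cong\; H^*(B) \otimes \redH^*(L_{\geq k}). \]
Moreover, $C_{\geq k}: E \to \ctr{Q}{k}E$ factors as $B\times L \xrightarrow{\id\times c_{\geq k}} B\times L_{\geq k} \to \ctr{Q}{k}E$, so on cohomology
\[ C^*_{\geq k}(\alpha \otimes \beta) \;=\; \alpha \otimes c_{\geq k}^*(\beta) \;\in\; H^*(B) \otimes H^*(L). \]
Analogous statements hold with $k$ replaced by $l$.

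Second, I would establish the basic vanishing range $\redH^r(L_{\geq k}) = 0$ for $r<k$. This follows from the long exact cohomology sequence of the cofiber sequence $\tr{L}{k} \xrightarrow{\tr{f}{k}} L \to L_{\geq k}$, together with the defining property of an equivariant Moore approximation: $\tr{f}{k}^*: H^r(L) \to H^r(\tr{L}{k})$ is an isomorphism for $r<k$ and $H^r(\tr{L}{k}) = 0$ for $r\geq k$.

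Third, and this is the heart of the argument, I would compute the cup product. For $x=\alpha\otimes\beta$ and $y=\gamma\otimes\delta$ with $\beta \in \redH^*(L_{\geq k}),$ $\delta \in \redH^*(L_{\geq l}),$ the multiplicativity of the Künneth identification yields
\[ C^*_{\geq k}(x)\cup C^*_{\geq l}(y) \;=\; \pm(\alpha\cup\gamma)\otimes\bigl(c_{\geq k}^*\beta \cup c_{\geq l}^*\delta\bigr) \]
in $H^*(B)\otimes H^*(L) \cong H^*(B\times L)$. By Step 2 we have $|\beta|\geq k$ and $|\delta|\geq l$, so the second tensor factor lives in $H^{|\beta|+|\delta|}(L)$ with $|\beta|+|\delta|\geq k+l$. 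Now complementarity $\ov{p}+\ov{q}=\ov{t}$ gives $\ov{p}(c+1)+\ov{q}(c+1)=c-1$, and therefore
\[ k+l \;=\; 2c - \bigl(\ov{p}(c+1)+\ov{q}(c+1)\bigr) \;=\; c+1 \;>\; c \;=\; \dim L. \]
Since $L$ is a closed manifold of dimension $c$, we conclude $c_{\geq k}^*\beta \cup c_{\geq l}^*\delta = 0$, and hence $C^*_{\geq k}(x)\cup C^*_{\geq l}(y)=0$ on pure tensors. Extending by bilinearity gives $\ob_i(\pi,k,l)=0$ for all $i$.

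There is no serious obstacle here; the only care needed is in verifying the Künneth identification of $\redH^*(\ctr{Q}{k}E)$ and in tracking the factorization of $C_{\geq k}$, after which the degree count $k+l=c+1$ does all the work.
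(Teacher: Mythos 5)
Your proof is correct and takes essentially the same route as the paper's: identify $\operatorname{ft}_{\geq k}E = B\times L_{\geq k}$, use the relative K\"unneth theorem and the naturality (multiplicativity) of the cross product, and kill all products by a degree count resting on complementarity. The only difference is cosmetic: you invoke $\redH^r(L_{\geq k})=0$ for $r<k$ up front so that the single estimate $k+l=c+1>\dim L$ suffices, whereas the paper splits into three cases according to the total $L$-degree and uses the cotruncation vanishing only in the case where that degree equals $\dim L$.
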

\begin{proof}
We have $\operatorname{ft}_{\geq k} E = B\times L_{\geq k}$ and
by the K\"unneth theorem, the reduced cohomology of $Q_{\geq k} E$ is given by
\begin{align*} 
\redH^* (Q_{\geq k} E) &= H^* (\operatorname{ft}_{\geq k} E, B) = 
  H^* (B\times L_{\geq k}, B\times \star) = H^* (B\times (L_{\geq k}, \star)) \\
  &\cong H^* (B)\otimes H^* (L_{\geq k}, \star). 
\end{align*}
Let $f_{\geq k}: L\to L_{\geq k}$ be the structural map associated to the cotruncation.
By the naturality of the cross product, the square
\[ \xymatrix{
H^* (E) & H^* (B) \otimes H^* (L) \ar[l]_\times^\cong \\
\redH^* (Q_{\geq k} E) \ar[u]^{C^*_{\geq k}} & H^* (B) \otimes H^* (L_{\geq k}, \star) \ar[l]_\times^\cong 
 \ar[u]_{\id \otimes f^*_{\geq k}} 
} \]
commutes.
Let $x\in \redH^i (Q_{\geq k} E),$ $y\in \redH^{n-1-i} (Q_{\geq l} E).$
Their images under the Eilenberg-Zilber map are of the form
\[ \EZ (x) = \sum_r b_r \otimes e^{\geq k}_r,~ b_r \in H^* (B),~ e^{\geq k}_r \in H^* (L_{\geq k},\star), \]
\[ \EZ (y) = \sum_s b'_s \otimes e^{\geq l}_s,~ b'_s \in H^* (B),~ e^{\geq l}_s \in H^* (L_{\geq l},\star), \]
$\deg b_r + \deg e^{\geq k}_r = i,$
$\deg b'_s + \deg e^{\geq l}_s = n-1-i.$
Thus
\[ (\id \otimes f^*_{\geq k})\EZ (x)\cup (\id \otimes f^*_{\geq l})\EZ (y) =
\left( \sum_r b_r \otimes f^*_{\geq k} (e^{\geq k}_r) \right) \cup
   \left( \sum_s b'_s \otimes f^*_{\geq l} (e^{\geq l}_s) \right) \]
and
\begin{align*}
C^*_{\geq k} (x)\cup C^*_{\geq l} (y) &= 
  \times \circ (\id \otimes f^*_{\geq k})\EZ (x) \cup \times \circ (\id \otimes f^*_{\geq l})\EZ (y) \\
&= \left( \sum_r b_r \times f^*_{\geq k} (e^{\geq k}_r) \right) \cup
   \left( \sum_s b'_s \times f^*_{\geq l} (e^{\geq l}_s) \right) \\
&= \sum_{r,s} \pm (b_r \cup b'_s) \times (f^*_{\geq k} (e^{\geq k}_r) \cup f^*_{\geq l} (e^{\geq l}_s)).
\end{align*}
If $\deg f^*_{\geq k} (e^{\geq k}_r) + \deg f^*_{\geq l} (e^{\geq l}_s) <\dim L,$ then
$\deg b_r + \deg b'_s >\dim B$ and thus $b_r \cup b'_s =0$.
If $\deg f^*_{\geq k} (e^{\geq k}_r) + \deg f^*_{\geq l} (e^{\geq l}_s) >\dim L,$ then
trivially $f^*_{\geq k} (e^{\geq k}_r) \cup f^*_{\geq l} (e^{\geq l}_s)=0$.
Finally, if $\deg f^*_{\geq k} (e^{\geq k}_r) + \deg f^*_{\geq l} (e^{\geq l}_s) =\dim L,$
then $f^*_{\geq k} (e^{\geq k}_r) \cup f^*_{\geq l} (e^{\geq l}_s)=0$ by the defining properties
of cotruncation and the fact that $k$ and $l$ are complementary.
This shows that
\[ C^*_{\geq k} (x)\cup C^*_{\geq l} (y)=0. \]
\end{proof}

This result means that, as for other characteristic classes, the duality obstructions
of a bundle are a measure of how twisted a bundle is.
An important special case is $\ov{p}(c+1) = \ov{q}(c+1)$. Then $k=l,$ $Q_{\geq k} E = Q_{\geq l} E$,
and for $x\in \redH^i (Q_{\geq k} E),$ $y\in \redH^{n-1-i} (Q_{\geq k} E),$
\[ C^*_{\geq k} (x)\cup C^*_{\geq l} (y) = C^*_{\geq k} (x\cup y). \]
By the injectivity of $C^*_{\geq k},$ this product vanishes if and only if $x\cup y=0$.
So in the case $k=l$ the local duality obstruction $\ob_* (\pi,k,k)$ vanishes if and only
complementary cup products in $\redH^* (Q_{\geq k} E)$ vanish. For a global product this
is indeed always the case, by Proposition \ref{prop.globalprodobvan}.
\begin{example}
Let $B=S^2,$ $L=S^3$ and $E=B\times L = S^2 \times S^3$.
Then $c=3$ and, taking $\ov{p}$ and $\ov{q}$ to be lower and upper middle perversities,
\[ k= 3-\ov{m} (4) = 2 = 3- \ov{n} (4) = l. \]
The degree $2$ Moore approximation is $L_{<2} = \pt$ and the cotruncation is
$L_{\geq 2} \simeq S^3 =L$. Thus 
\[ \operatorname{ft}_{\geq 2} E = B\times L_{\geq 2} \simeq S^2 \times S^3 =E. \]
The reduced cohomology $\redH^i (Q_{\geq 2} E) = H^i (S^2 \times (S^3, \pt))$
is isomorphic to $\rat$ for $i=3,5$ and zero for all other $i$.
Thus all (and in particular, the complementary) cup products vanish and so the local duality obstruction
$\ob_* (\pi,2,2)$ vanishes.
\end{example}
Here is an example of a fiber bundle whose duality obstruction does not vanish.
\begin{example} \label{expl.obnonzero}
Let $Dh$ be the disc bundle associated to the Hopf bundle $h:S^3 \to S^2$, i.e.
$Dh$ is the normal disc bundle of $\cplx P^1$ in $\cplx P^2$. Now take two copies 
$Dh_+ \to S^2_+$ and $Dh_- \to S^2_-$ of this disc bundle and define $E$ as the double
\[   E = Dh_+ \cup_{S^3} Dh_-. \]
Then $E$ is the fiberwise suspension of $h$ and so an $L=S^2$-bundle over $B=S^2$,
with $L$ the suspension of a circle. Let $\sigma_+, \sigma_- \in L$ be the two suspension points.
The bundle $E$ is the sphere bundle of a real $3$-plane vector bundle $\xi$ over $S^2$
with $\xi = \eta \oplus \underline{\real}^1$, where $\eta$ is the real $2$-plane bundle whose
circle bundle is the Hopf bundle and $\underline{\real}^1$ is the trivial line bundle.
The points $\sigma_\pm$ are fixed points under the
action of the structure group on $L$.
Let $\ov{p}$ be the lower, and $\ov{q}$ the upper middle perversity. Here $n=5,$ $b=2$ and $c=2$.
Therefore, $k=2$ and $l=1$. Both structural sequences
\[ L_{<1} \stackrel{f_{<1}}{\longrightarrow} L \stackrel{f_{\geq 1}}{\longrightarrow} L_{\geq 1} \]
and
\[ L_{<2} \stackrel{f_{<2}}{\longrightarrow} L \stackrel{f_{\geq 2}}{\longrightarrow} L_{\geq 2} \]
are given by
\[ \{ \sigma_+ \} \hookrightarrow S^2 \stackrel{\id}{\longrightarrow} S^2. \]
The identity map is of course equivariant, but the inclusion of the suspension point is
equivariant as well, since this is a fixed point. It follows that the fiberwise (co)truncations
\[ \operatorname{ft}_{<1} E \longrightarrow E \longrightarrow \operatorname{ft}_{\geq 1} E \]
and
\[ \operatorname{ft}_{<2} E \longrightarrow E \longrightarrow \operatorname{ft}_{\geq 2} E \]
are both given by
\[ S^2_+ \stackrel{s_+}{\hookrightarrow} E \stackrel{\id}{\longrightarrow} E, \]
where $s_+$ is the section of $\pi:E\to S^2$ given by sending a point to the suspension point
$\sigma_+$ over it. Furthermore,
\[ Q_{\geq 1} E = Q_{\geq 2} E = E \cup_{S^2_+} D^3, \]
which is homotopy equivalent to complex projective space $\cp^2$. Indeed, a homotopy
equivalence is given by the quotient map
\[ Q_{\geq 1} E \stackrel{\simeq}{\longrightarrow}
  \frac{Q_{\geq 1} E}{D^3} \cong \frac{E}{S^2_+} \cong \frac{Dh_+ \cup_{S^3} Dh_-}{S^2_+}
  \cong D^4 \cup_{S^3} Dh_- = \cp^2. \]
The cohomology ring of $\cp^2$ is the truncated polynomial ring $\rat [x]/(x^3 =0)$
generated by 
\[ x\in H^2 (\cp^2) \cong \redH^2 (Q_{\geq 2} E) \cong \redH^{n-1-2} (Q_{\geq 1} E). \]
The square $x^2$ generates $H^4 (\cp^2)$, so by the injectivity of $C^*_{\geq 1} = C^*_{\geq 2},$
\[ C^*_{\geq 1} (x)\cup C^*_{\geq 2} (x) = C^*_{\geq 1} (x^2) \in H^4 (E) \]
is not zero. Thus the duality obstruction $\ob_2 (\pi,2,1)$ does not vanish.
It follows from Proposition \ref{prop.globalprodobvan} that $\pi:E\to S^2$ is in fact a nontrivial bundle, 
which can here of course also be seen directly.
Note that the Serre spectral sequence of any $S^2$-bundle over $S^2$ collapses at $E_2$.
Thus the obstructions $\ob_* (\pi,k,l)$ are able to detect twisting that is not detected by the
differentials of the Serre spectral sequence.
\end{example}

\section{Flat Bundles}

We have shown that the local duality obstructions vanish for product bundles.
We prove here that they also vanish for flat bundles, at least when the fundamental
group of the base is finite. The latter assumption can probably be relaxed, but we shall
not pursue this further here. A fiber bundle $\pi:E\to B$ with structure group $G$ is \emph{flat}
if its $G$-valued transition functions are locally constant.

\begin{theorem} \label{thm.obvanflatfinitefund}
Let $\pi:E \to B$ be a fiber bundle of  topological manifolds with structure group $G$, 
compact connected base $B$ and compact fiber $L$,
$\dim E=n-1,$ $b=\dim B,$ $c=n-b-1$.
For complementary perversities $\bar{p}, \bar{q},$ let
$k=c-\bar{p}(c+1),$ $l=c-\bar{q}(c+1).$ If
\begin{enumerate}
\item $L$ possesses $G$-equivariant Moore approximations of degree $k$ and of degree $l$,
\item $\pi$ is flat with respect to $G$, and
\item the fundamental group $\pi_1 (B)$ of the base is finite,
\end{enumerate}
then $\ob_i (\pi, k, l)=0$ for all $i$.
\end{theorem}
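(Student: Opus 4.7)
The plan is to reduce to the trivial-bundle case handled in Proposition \ref{prop.globalprodobvan} by passing to a finite cover of the base that kills the monodromy, and then to transfer the vanishing back to the original bundle using the injectivity of pullback on rational cohomology along a finite cover.

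First I would form the universal cover $p:\widetilde{B}\to B$, which is a finite-sheeted covering map since $\pi_1(B)$ is finite (and $\widetilde{B}$ is again a compact manifold). Let $\widetilde{\pi}:\widetilde{E}\to \widetilde{B}$ be the pullback of $\pi$, and let $p_E:\widetilde{E}\to E$ denote the induced finite covering map. Because $\pi$ is flat, it is classified by its holonomy representation $\rho:\pi_1(B)\to G$; pulling back to $\widetilde{B}$ makes the holonomy trivial, so $\widetilde{\pi}$ is a global product $\widetilde{B}\times L$ as $G$-bundles. The hypothesized equivariant Moore approximations $f_{<k}:L_{<k}\to L$ and $f_{<l}:L_{<l}\to L$ are the same on both sides, and since fiberwise truncation and cotruncation are constructed via the associated bundle construction together with a homotopy pushout over the base, any bundle map covering the base map induces maps on truncations and cotruncations. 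In particular $p_E$ fits into a commutative square
\[
\xymatrix{
\redH^*(Q_{\geq k} E) \ar[r]^{C^*_{\geq k}} \ar[d] & H^*(E) \ar[d]^{p_E^*} \\
\redH^*(Q_{\geq k} \widetilde{E}) \ar[r]^{C^*_{\geq k}} & H^*(\widetilde{E})
}
\]
and similarly for $l$, where the left vertical maps are induced by the evident pushout map $Q_{\geq k}\widetilde{E}\to Q_{\geq k}E$.

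Now I would invoke the transfer. Since $p_E$ is a finite covering of degree $d=|\pi_1(B)|$, there is a transfer homomorphism $\tau:H^*(\widetilde{E})\to H^*(E)$ satisfying $\tau\circ p_E^*=d\cdot\mathrm{id}$, so $p_E^*$ is injective on rational cohomology. Given $x\in\redH^i(Q_{\geq k}E)$ and $y\in\redH^{n-1-i}(Q_{\geq l}E)$, let $\widetilde{x},\widetilde{y}$ denote their images in $\redH^*(Q_{\geq k}\widetilde{E})$ and $\redH^*(Q_{\geq l}\widetilde{E})$. Using the square above and multiplicativity of the cup product,
\[
p_E^*\bigl(C^*_{\geq k}(x)\cup C^*_{\geq l}(y)\bigr)
= C^*_{\geq k}(\widetilde{x})\cup C^*_{\geq l}(\widetilde{y}).
\]
By Proposition \ref{prop.globalprodobvan} applied to the trivial bundle $\widetilde{\pi}$, the right-hand side vanishes, and by injectivity of $p_E^*$ the original class $C^*_{\geq k}(x)\cup C^*_{\geq l}(y)\in H^{n-1}(E)$ must vanish as well. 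Since $x,y$ were arbitrary, $\ob_i(\pi,k,l)=0$.

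The main technical point to nail down will be the naturality square: one must check that the cotruncation construction from Definition \ref{cotruncationdef}, performed on the pullback $\widetilde{E}$ with its inherited $G$-structure, maps correctly into the cotruncation of $E$ under $p_E$, and that this map is compatible with $C_{\geq k}$. This should follow formally from the universal mapping property of the homotopy pushout (Remark \ref{pushoutuniversal}) together with the fact that $p_E$ restricts to a bundle map $\operatorname{ft}_{<k}\widetilde{E}\to \operatorname{ft}_{<k}E$ commuting with the structural maps $F_{<k}$; nothing deeper than functoriality of associated bundles should be required. Once this naturality is in hand, the argument reduces entirely to the product case plus the finite-cover transfer, with no further obstacle.
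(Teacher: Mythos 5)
Your argument is correct, and it reaches the conclusion by a somewhat different route than the paper, although both proofs germinate from the same two ingredients: the finite universal cover $\widetilde{B}\to B$ and a rational transfer argument. You reduce directly to the product case: you pull the bundle back to $\widetilde{B}$, observe that flatness plus simple connectivity trivializes it as a $G$-bundle, check that the truncation--cotruncation machinery ($\operatorname{ft}_{<k}$, $\operatorname{ft}_{\geq k}$, $Q_{\geq k}$, $C_{\geq k}$) is natural under base change of the underlying principal bundle (which is indeed routine from the associated-bundle construction and the explicit double-mapping-cylinder description of the homotopy pushout, so the ``technical point'' you flag is harmless), and then you need only the \emph{injectivity} of $p_E^*$ on rational cohomology together with Proposition \ref{prop.globalprodobvan} quoted as a black box. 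The paper instead works with the quotient description $E=(\widetilde{B}\times L)/\pi_1$ and uses transfer to identify $H^*(E)$, $H^*(\operatorname{ft}_{\geq k}E)$ and $\redH^*(Q_{\geq k}E)$ with the $\pi_1$-\emph{invariant} subspaces of the corresponding cohomology upstairs; it then verifies equivariance of the cross product and Eilenberg--Zilber maps and re-runs the K\"unneth cup-product computation of Proposition \ref{prop.globalprodobvan} inside those invariant subspaces. Your version is shorter and more modular: it makes transparent that all one really needs is \emph{some} finite cover of $B$ over which the bundle becomes a product compatibly with the Moore approximation data, so it would apply verbatim, e.g., when the monodromy merely factors through a finite group. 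The paper's more elaborate bookkeeping with invariant classes is the natural starting point if one wants to relax the finiteness assumption on $\pi_1(B)$, as the authors indicate they expect to be possible. (A minor remark, not a gap: Proposition \ref{prop.globalprodobvan} is stated under the standing smooth/oriented hypotheses of that section, but its proof uses only the K\"unneth theorem and degree counting, so applying it to $\widetilde{B}\times L$ in the topological setting of Theorem \ref{thm.obvanflatfinitefund} is legitimate.)
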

\begin{proof}
Let $\tB$ be the (compact) universal cover of $B$ and $\pi_1 = \pi_1 (B)$ the fundamental group.
By the $G$-flatness of $E$, there exists a monodromy representation
$\pi_1 \to G$ such that
\[ E = (\tB \times L)/\pi_1, \]
where $\tB \times L$ is equipped with the diagonal action of $\pi_1$, which is free.
If $M$ is any compact space on which a finite group $\pi_1$ acts freely,
then transfer arguments (using the finiteness of $\pi_1$) show that the
orbit projection $\rho: M\to M/\pi_1$ induces an isomorphism on rational cohomology,
\[ \rho^*: H^* (M/\pi_1) \stackrel{\cong}{\longrightarrow} H^* (M)^{\pi_1}, \]
where $H^* (M)^{\pi_1}$ denotes the $\pi_1$-invariant cohomology classes.
Applying this to $M=\tB \times L$, we get an isomorphism
\[ \rho^*: H^* (E) \stackrel{\cong}{\longrightarrow} H^* (\tB \times L)^{\pi_1}. \]
Using the monodromy representation, the $G$-cotruncation $L_{\geq k}$ becomes a
$\pi_1$-space with
\[ \cft{E}{k} = (\tB \times L_{\geq k})/\pi_1. \]
The closed subspace $\tB \times \star \subset \tB \times L_{\geq k},$ where $\star \in L_{\geq k}$ 
is the cone point, is $\pi_1$-invariant, since $\star$ is a fixed point of $L_{\geq k}$. 
Then a relative transfer argument applied to the
pair $(\tB \times L_{\geq k}, B\times \star)$ yields an isomorphism
\[ \rho^*: \redH^* (Q_{\geq k} E) = H^* (\operatorname{ft}_{\geq k} E, B)
   \stackrel{\cong}{\longrightarrow} H^* (\tB \times L_{\geq k}, \tB \times \star)^{\pi_1}. \]

Using the structural map $f_{\geq k}: L\to L_{\geq k},$ we define a map
\[ p_{\geq k} = \id \times f_{\geq k}: \tB \times L \longrightarrow \tB \times L_{\geq k}. \]
Since $f_{\geq k}$ is equivariant, the map $p_{\geq k}$ is $\pi_1$-equivariant with respect to
the diagonal action.
The diagram
\[ \xymatrix{
\tB \times L \ar[r]^\rho \ar[d]_{p_{\geq k}} & E \ar[d]^{c_{\geq k}} \\
\tB \times L_{\geq k} \ar[r]^\rho & \cft{E}{k}
} \]
commutes and induces on cohomology the commutative diagram
\begin{equation} \label{equ.eblft}
\xymatrix{
H^* (E) \ar[r]^{\rho^*}_{\cong} & H^* (\tB \times L)^{\pi_1} \\
H^* (\cft{E}{k}) \ar[r]^<<<<<{\rho^*}_<<<<<{\cong} \ar[u]^{c^*_{\geq k}} & 
H^* (\tB \times L_{\geq k})^{\pi_1} \ar[u]_{p^*_{\geq k}}
} \end{equation}
as we shall now verify: If $a\in H^* (\tB \times L_{\geq k})$ satisfies
$g^* (a)=a$ for all $g\in \pi_1,$ then the equivariance of $p_{\geq k}$ implies that
\[ g^* p^*_{\geq k} (a) = p^*_{\geq k} (g^* a) = p^*_{\geq k} (a), \]
which shows that indeed $p^*_{\geq k} (a) \in H^* (\tB \times L)^{\pi_1}$.
Similarly, there is a commutative diagram
\begin{equation} \label{equ.fteblq}
\xymatrix{
H^* (\cft{E}{k}) \ar[r]^{\rho^*}_{\cong} & 
     H^* (\tB \times L_{\geq k})^{\pi_1} \\
\redH^* (Q_{\geq k} E) \ar[r]^<<<<<{\rho^*}_<<<<<{\cong} \ar[u]^{\xi^*_{\geq k}} &
   H^* (\tB \times (L_{\geq k}, \star))^{\pi_1}. \ar[u]
} \end{equation}
Concatenating diagrams (\ref{equ.eblft}) and (\ref{equ.fteblq}), we obtain the commutative diagram
\[ \xymatrix{
H^* (E) \ar[r]^{\rho^*}_{\cong} & H^* (\tB \times L)^{\pi_1} \\
\redH^* (Q_{\geq k} E) \ar[r]^<<<<<{\rho^*}_<<<<<{\cong} \ar[u]^{C^*_{\geq k}} &
   H^* (\tB \times (L_{\geq k}, \star))^{\pi_1}. \ar[u]_{P^*_{\geq k}}
} \]
By the K\"unneth theorem, the cross product $\times$ is an isomorphism
\[ \times: H^* (\tB)\otimes H^* (L) \stackrel{\cong}{\longrightarrow} H^* (\tB \times L) \]
whose inverse is given by the Eilenberg-Zilber map $\EZ$.
Define a $\pi_1$-action on the tensor product $H^* (\tB)\otimes H^* (L)$ by
\[ g^* (a) := (\EZ \circ g^* \circ \times)(a),~ g\in \pi_1. \]
This makes the cross-product $\pi_1$-equivariant:
\[ \times \circ g^* (a) = \times \circ \EZ \circ g^* \circ \times (a) = g^* \circ \times (a). \]
Therefore, the cross-product restricts to a map
\begin{equation} \label{equ.restrcross}
\times: (H^* \tB \otimes H^* L)^{\pi_1} \longrightarrow H^* (\tB \times L)^{\pi_1}. 
\end{equation}
The Eilenberg-Zilber map is equivariant as well, since
\[ g^* \EZ (b) = \EZ \circ g^* \circ \times \circ \EZ (b) = \EZ \circ g^* (b). \]
Consequently, the Eilenberg-Zilber map restricts to a map
\begin{equation} \label{equ.restrez}
\EZ: H^* (\tB \times L)^{\pi_1} \longrightarrow (H^* \tB \otimes H^* L)^{\pi_1}. 
\end{equation}
Since $\times$ and $\EZ$ are inverse to each other, this shows in particular that the
restricted cross-product (\ref{equ.restrcross}) and the restricted Eilenberg-Zilber map (\ref{equ.restrez})
are isomorphisms.
All of these constructions apply just as well to $(L_{\geq k}, \star)$ instead of $L$.
By the naturality of the cross product, the square
\[ \xymatrix{
H^* (\tB \times L) & H^* \tB \otimes H^* L \ar[l]_\times^\cong \\
H^* (\tB \times (L_{\geq k}, \star)) \ar[u]^{P^*_{\geq k}} & H^* \tB \otimes H^* (L_{\geq k}, \star) 
  \ar[l]_\times^\cong \ar[u]_{\id \otimes f^*_{\geq k}} 
} \]
commutes. As we have seen, this diagram restricts to the various $\pi_1$-invariant subspaces.
In summary then, we have constructed a commutative diagram
\[ \xymatrix{
H^* (E) \ar[r]^{\rho^*}_{\cong} & H^* (\tB \times L)^{\pi_1} & 
     (H^* \tB \otimes H^* L)^{\pi_1} \ar[l]_\times^\cong \\
\redH^* (Q_{\geq k} E) \ar[r]^<<<<<{\rho^*}_<<<<<{\cong} \ar[u]^{C^*_{\geq k}} & 
   H^* (\tB \times (L_{\geq k}, \star))^{\pi_1} \ar[u]^{P^*_{\geq k}} & 
  (H^* \tB \otimes H^* (L_{\geq k}, \star))^{\pi_1} \ar[l]_\times^\cong \ar[u]_{\id \otimes f^*_{\geq k}}  
} \]
An analogous diagram is, of course, available for $Q_{\geq l} E$.

Let $x\in H^i (\tB \times (L_{\geq k}, \star))^{\pi_1},$ 
$y\in H^{n-1-i} (\tB \times (L_{\geq l}, \star))^{\pi_1}.$
Their images under the Eilenberg-Zilber map are of the form
\[ \EZ (x) = \sum_r b_r \otimes e^{\geq k}_r,~ b_r \in H^* (\tB),~ e^{\geq k}_r \in H^* (L_{\geq k},\star), \]
\[ \EZ (y) = \sum_s b'_s \otimes e^{\geq l}_s,~ b'_s \in H^* (\tB),~ e^{\geq l}_s \in H^* (L_{\geq l}, \star), \]
$\deg b_r + \deg e^{\geq k}_r = i,$
$\deg b'_s + \deg e^{\geq l}_s = n-1-i.$
Thus
\[ (\id \otimes f^*_{\geq k})\EZ (x)\cup (\id \otimes f^*_{\geq l})\EZ (y) =
\left( \sum_r b_r \otimes f^*_{\geq k} (e^{\geq k}_r) \right) \cup
   \left( \sum_s b'_s \otimes f^*_{\geq l} (e^{\geq l}_s) \right) \]
and
\begin{align*}
P^*_{\geq k} (x)\cup P^*_{\geq l} (y) &= 
  \times \circ (\id \otimes f^*_{\geq k})\EZ (x) \cup \times \circ (\id \otimes f^*_{\geq l})\EZ (y) \\
&= \left( \sum_r b_r \times f^*_{\geq k} (e^{\geq k}_r) \right) \cup
   \left( \sum_s b'_s \times f^*_{\geq l} (e^{\geq l}_s) \right) \\
&= \sum_{r,s} \pm (b_r \cup b'_s) \times (f^*_{\geq k} (e^{\geq k}_r) \cup f^*_{\geq l} (e^{\geq l}_s)).
\end{align*}
If $\deg f^*_{\geq k} (e^{\geq k}_r) + \deg f^*_{\geq l} (e^{\geq l}_s) <\dim L,$ then
$\deg b_r + \deg b'_s >\dim B$ and thus $b_r \cup b'_s =0$.
If $\deg f^*_{\geq k} (e^{\geq k}_r) + \deg f^*_{\geq l} (e^{\geq l}_s) >\dim L,$ then
trivially $f^*_{\geq k} (e^{\geq k}_r) \cup f^*_{\geq l} (e^{\geq l}_s)=0$.
Finally, if $\deg f^*_{\geq k} (e^{\geq k}_r) + \deg f^*_{\geq l} (e^{\geq l}_s) =\dim L,$
then $f^*_{\geq k} (e^{\geq k}_r) \cup f^*_{\geq l} (e^{\geq l}_s)=0$ by the defining properties
of cotruncation and the fact that $k$ and $l$ are complementary.
This shows that
\[ P^*_{\geq k} (x)\cup P^*_{\geq l} (y)=0. \]
For $\xi \in \redH^i (Q_{\geq k} E),$ $\eta \in \redH^{n-1-i} (Q_{\geq l} E),$
we find
\[ \rho^* (C^*_{\geq k}(\xi)\cup C^*_{\geq l} (\eta)) =
  \rho^* C^*_{\geq k}(\xi)\cup \rho^* C^*_{\geq l} (\eta) =
  P^*_{\geq k} (\rho^* \xi)\cup P^*_{\geq l} (\rho^* \eta)=0. \]
As $\rho^*$ is an isomorphism, 
\[ C^*_{\geq k}(\xi)\cup C^*_{\geq l} (\eta) =0. \]
\end{proof}

\section{Thom-Mather Stratified Spaces}
\label{sec.thommather}

In the present paper, intersection spaces will be constructed using the framework of Thom-Mather stratified spaces.
Such spaces are locally compact,
second countable Hausdorff spaces $X$ together with a Thom-Mather $C^\infty$-stratification, \cite{mathertopstab}.
We are concerned with \emph{two-strata pseudomanifolds}, which, in more detail, are understood to be
pairs $(X,\Sigma),$ where
$\Sigma\subset X$ is a closed subspace and a connected smooth manifold, and $X\setminus \Sigma$ is a smooth manifold
which is dense in $X$. The singular stratum $\Sigma$ must have codimension at least $2$ in $X$.
Furthermore, $\Sigma$ possesses control data consisting of an open neighborhood $T\subset X$ of $\Sigma$,
a continuous retraction $\pi:T\rightarrow \Sigma$, and a continuous distance function
$\rho:T\rightarrow \left[0,\infty \right)$ such that $\rho^{-1}\left(0\right)=\Sigma$.
The restriction of $\pi$ and $\rho$ to $T\setminus \Sigma$ are required to be smooth and
$(\pi,\rho): T\setminus \Sigma \to \Sigma \times (0,\infty)$ is required to be a submersion.
(Mather's axioms do \emph{not} require $(\pi,\rho)$ to be proper.)
Without appealing to the method of controlled vector fields required by
Thom and Mather for general stratified spaces, we shall prove directly that for two-strata spaces,
the bottom stratum $\Sigma$ possesses a locally trivial link bundle whose projection is induced by $\pi$. 

\begin{lemma}  \label{lem.preimsubm}
Let $f:M\to N$ be a smooth submersion between smooth manifolds and let
$Q\subset N$ be a smooth submanifold. Then $P=f^{-1}(Q)\subset M$ is a smooth submanifold
and $f|: P\to Q$ is a submersion.
\end{lemma}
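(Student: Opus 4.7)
The plan is to invoke the standard preimage (transversality) theorem for the first assertion and then compute the differential of the restriction directly for the second. Both parts are local, so I will work in suitable charts.

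First I would observe that, since $f$ is a submersion, for every $x\in M$ the differential $df_x:T_x M\to T_{f(x)} N$ is surjective. For $x\in P$, this implies $df_x(T_x M)=T_{f(x)}N\supset T_{f(x)}Q$, so $f$ is transverse to $Q$ at every point of $P$. The preimage theorem for transverse maps then gives that $P=f^{-1}(Q)$ is a smooth embedded submanifold of $M$, with
\[ \codim_M P = \codim_N Q \]
and with tangent space
\[ T_x P = (df_x)^{-1}(T_{f(x)} Q) \]
at every $x\in P$. If one prefers a more hands-on argument, one can instead choose local submersion coordinates in which $f$ becomes a linear projection $\R^{m}=\R^{n-q}\times\R^{q}\times\R^{m-n}\to \R^n$ and $Q$ is locally the slice $\R^{n-q}\times\{0\}\times\{0\}$ in $N$; then $P$ is cut out locally by the vanishing of the middle coordinates, which visibly makes it a smooth submanifold.

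For the second claim, let $x\in P$ and set $g=f|_P:P\to Q$. Since $f(P)\subset Q$, the differential $dg_x$ is just the restriction $df_x|_{T_xP}:T_xP\to T_{f(x)}Q$. Using the identification $T_xP=(df_x)^{-1}(T_{f(x)}Q)$ from the previous step, we compute
\[ dg_x(T_xP)=df_x\bigl((df_x)^{-1}(T_{f(x)}Q)\bigr)=T_{f(x)}Q\cap \im(df_x)=T_{f(x)}Q, \]
where the last equality uses that $df_x$ is surjective. Hence $dg_x$ is surjective, so $g$ is a submersion.

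The argument is entirely routine; the only mild care needed is in identifying $T_xP$ as the preimage $(df_x)^{-1}(T_{f(x)}Q)$, which is the content of the preimage theorem and is what makes the restricted differential come out surjective. No genuine obstacle is expected.
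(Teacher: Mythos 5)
Your proof is correct. The first half coincides with the paper's argument: both note that a submersion is transverse to every submanifold and invoke the transversality preimage theorem to conclude that $P=f^{-1}(Q)$ is a smooth submanifold. For the surjectivity of the restricted differential the two arguments diverge in mechanism, though not in substance. You use the refined form of the preimage theorem, namely the identification $T_xP=(df_x)^{-1}\bigl(T_{f(x)}Q\bigr)$, and then a one-line linear-algebra computation $df_x\bigl((df_x)^{-1}(T_{f(x)}Q)\bigr)=T_{f(x)}Q\cap\im(df_x)=T_{f(x)}Q$, which is valid since $df_x$ is surjective. The paper instead avoids quoting the tangent-space formula: it only uses that $f_*$ carries $T_xP$ into $T_{f(x)}Q$, cites Br\"ocker--J\"anich for the fact that the induced map of normal spaces $T_xM/T_xP\to T_{f(x)}N/T_{f(x)}Q$ is an isomorphism, and then applies the four-lemma to the two short exact sequences to extract surjectivity of $f|_*$. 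Your route is slightly more self-contained and computational, provided one grants the tangent-space description of $P$ (which is standard and is essentially equivalent to the normal-space isomorphism the paper cites); the paper's route buys a clean diagram-chase argument at the cost of an external reference. Either is perfectly acceptable.
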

\begin{proof}
A submersion is transverse to any submanifold. Thus $f$ is transverse to $Q$ and
$P=f^{-1}(Q)$ is a smooth submanifold of $M$.
The differential $f_*:T_x M\to T_{f(x)} N$ at any point $x\in P$ maps
$T_x P$ into $T_{f(x)} Q$ and thus induces a map 
$TM/TP \to TN/TQ$ of normal bundles. This map is a bundle isomorphism
(cf. \cite[Satz (5.12)]{broeckerjaenich}). An application of the four-lemma to the
commutative diagram with exact rows
\[ \xymatrix{
0 \ar[r] & T_x P \ar[d]_{f|_*} \ar[r] & T_x M \ar@{->>}[d]_{f_*} \ar[r] & T_x M/T_x P \ar[d]^{\cong} \ar[r] & 0 \\
0 \ar[r] & T_{f(x)} Q \ar[r] & T_{f(x)} N \ar[r] & T_{f(x)} N/T_{f(x)} Q \ar[r] & 0 \\
} \]
shows that $f|_*: T_x P\to T_{f(x)} Q$ is surjective for every $x\in P$.
\end{proof}

\begin{proposition} \label{prop.linkbundle}
Let $(X,\Sigma)$ be a Thom-Mather $C^\infty$-stratified pseudomanifold with two strata and control data
$(T,\pi,\rho)$. Then there exists a smooth function $\epsilon: \Sigma \to (0,\infty)$
such that the restriction $\pi: E\to \Sigma$ to
\[ E= \{ x\in T ~|~ \rho (x) = \epsilon (\pi (x)) \} \]
is a smooth locally trivial fiber bundle with structure
group $G=\operatorname{Diff}(L),$ the diffeomorphisms of $L=\pi^{-1}(s)\cap E,$ where $s\in \Sigma$.
\end{proposition}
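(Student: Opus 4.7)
The plan is to choose $\epsilon$ so that $\pi|_E$ becomes a proper smooth submersion, and then to invoke Ehresmann's fibration theorem.

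For \emph{any} smooth function $\epsilon:\Sigma\to(0,\infty)$, the graph $\Gamma_\epsilon=\{(s,\epsilon(s))\mid s\in\Sigma\}$ is a smooth embedded submanifold of $\Sigma\times(0,\infty)$ carried diffeomorphically onto $\Sigma$ by the projection $\mathrm{pr}_1$. Since $(\pi,\rho):T\setminus\Sigma\to\Sigma\times(0,\infty)$ is a smooth submersion by hypothesis, Lemma \ref{lem.preimsubm} applied with $Q=\Gamma_\epsilon$ shows that $E=(\pi,\rho)^{-1}(\Gamma_\epsilon)$ is a smooth submanifold of $T\setminus\Sigma$ and that $(\pi,\rho)|_E:E\to\Gamma_\epsilon$ is a smooth submersion; composing with $\mathrm{pr}_1$ gives that $\pi|_E:E\to\Sigma$ is a smooth submersion. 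This part is essentially automatic and works for any smooth $\epsilon$.

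The main work is to select $\epsilon$ so that $\pi|_E$ is additionally proper, and this is where the hypotheses must be used carefully, since $(\pi,\rho)$ itself is not assumed proper (and controlled vector fields, which are the standard remedy, are to be avoided). I would proceed by a local-to-global patching. For each $s_0\in\Sigma$, local compactness of $X$ combined with openness of $T$ provides a compact neighborhood $K_{s_0}\subset T$ of $s_0$. By continuity of $\pi$ and $\rho$, one finds an open neighborhood $V_{s_0}\subset\Sigma$ of $s_0$ with $\overline{V_{s_0}}$ compact, together with $\delta_{s_0}>0$, such that $\pi^{-1}(\overline{V_{s_0}})\cap\rho^{-1}([0,\delta_{s_0}])\subset K_{s_0}$. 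Passing to a locally finite open refinement $\{V_\alpha\}$ of $\{V_{s_0}\}_{s_0\in\Sigma}$, with associated compact sets $K_\alpha$ and bounds $\delta_\alpha$, choose a subordinate smooth partition of unity $\{\psi_\alpha\}$ and set $\epsilon(s)=\sum_\alpha\psi_\alpha(s)\,\delta'_\alpha$, where the constants $\delta'_\alpha>0$ are picked small enough that $\delta'_\alpha<\delta_\beta$ whenever $V_\alpha\cap V_\beta\neq\emptyset$; local finiteness makes such a choice possible. Then on every $V_\alpha$ the value $\epsilon(s)$ is bounded above by $\delta_\alpha$, and consequently $\pi|_E^{-1}(\overline{V_\alpha})\subset K_\alpha$ is compact. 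Hence $\pi|_E$ has compact fibers and is proper over every compact subset of $\Sigma$.

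Finally, Ehresmann's fibration theorem asserts that a proper smooth submersion between smooth manifolds is a smooth locally trivial fiber bundle, whose transition maps over a trivializing cover are fiber-preserving diffeomorphisms and thus elements of $G=\mathrm{Diff}(L)$. Connectedness of $\Sigma$ guarantees that all fibers are diffeomorphic to the single manifold $L=\pi^{-1}(s)\cap E$ for any chosen $s\in\Sigma$. The hard part of the argument is the middle paragraph: the whole point of the construction is to choose $\epsilon$ decreasing fast enough near the frontier of $T$ that the level set $\{\rho=\epsilon\circ\pi\}$ is trapped inside compact tubes, which is precisely the function controlled vector fields play in the Thom--Mather theory for deeper stratifications.
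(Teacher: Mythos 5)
Your first and last paragraphs are fine and match the paper's strategy (Lemma \ref{lem.preimsubm} applied to the graph of $\epsilon$ to get a submersion, then Ehresmann), but the middle paragraph — which you correctly identify as the heart of the matter — has a genuine gap. The assertion that ``by continuity of $\pi$ and $\rho$'' one can find $V_{s_0}$ and $\delta_{s_0}$ with $\pi^{-1}(\overline{V_{s_0}})\cap\rho^{-1}([0,\delta_{s_0}])\subset K_{s_0}$ is not a consequence of continuity at all: continuity controls images of nearby points, whereas this is a statement about \emph{preimages}, i.e.\ a local properness statement for $(\pi,\rho)$ at $(s_0,0)$. Nothing in the Thom--Mather axioms as stated prevents a sequence $x_n\in T$ that leaves every compact subset of $T$ (drifting toward the frontier of $T$ in $X$, or off to infinity in $X$) while $\pi(x_n)$ stays near $s_0$ and $\rho(x_n)\to 0$; such a sequence destroys the desired inclusion for every choice of $V_{s_0}$, $\delta_{s_0}$, $K_{s_0}$. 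This is exactly the point the paper flags when it remarks that Mather's axioms do \emph{not} require $(\pi,\rho)$ to be proper, and it is the reason the paper does not argue by bare continuity but instead invokes Pflaum's Lemma 3.1.2(2) \cite{pflaum}: there exists a smooth $\epsilon$ such that $(\pi,\rho):T_\epsilon\to\Sigma\times[0,\epsilon)$ is proper and surjective, where $T_\epsilon=\{x\in T\mid \rho(x)<\epsilon(\pi(x))\}$. With that in hand, the paper takes $E$ to be the preimage of the graph of $\frac{1}{2}\epsilon$, observes that $E$ is closed in $T_\epsilon$, and gets properness of $\pi|_E$ by restricting a proper map to a closed subspace — the nontrivial point-set input is outsourced to Pflaum's lemma rather than rederived. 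So your partition-of-unity patching is structurally sound, but its base case is precisely the unproved (and, without further argument, unprovable-from-continuity) step.

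A second, smaller omission: you never show that $\pi|_E$ is surjective (or even that $E\neq\emptyset$). Properness plus submersion gives that the image is open and closed in the connected manifold $\Sigma$, but it could a priori be empty — nothing in your construction forces the value $\epsilon(s)$ to be attained by $\rho$ over $s$. The paper obtains surjectivity of $\pi|_E$ from the surjectivity of $(\pi,\rho):T_\epsilon\to\Sigma\times[0,\epsilon)$, which is part of the cited lemma; your proof needs an analogous argument before the fiber $L=\pi^{-1}(s)\cap E$ and the claimed structure group $\operatorname{Diff}(L)$ make sense.
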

\begin{proof}
If $\epsilon: \Sigma \to (0,\infty)$ is any function, we write
\[ T_\epsilon = \{ x\in T ~|~ \rho (x) < \epsilon (\pi (x)) \} \]
and
\[ \Sigma \times [0,\epsilon) = \{ (s,t)\in \Sigma \times [0,\infty) ~|~ 0\leq t < \epsilon (s) \}. \]
By \cite[Lemma 3.1.2(2)]{pflaum}, there exists a smooth $\epsilon$ such that
$(\pi,\rho): T_\epsilon \to \Sigma \times [0,\epsilon)$ is proper and surjective
(and still a submersion on $T_\epsilon \setminus \Sigma$ because $T_\epsilon \setminus \Sigma$ 
is open in $T \setminus \Sigma$).
(This involves only arguments of a point-set topological nature, but no controlled vector fields.
Pflaum's lemma provides only for a continuous $\epsilon$, but it is clear that on a smooth $\Sigma$,
one may take $\epsilon$ to be smooth.)
Setting
\[ E= \{ x\in T ~|~ \rho (x) = \smlhf \epsilon (\pi (x)) \} \subset T_\epsilon \setminus \Sigma, \]
we claim first that $\pi: E\to \Sigma$ is proper.
Let $\Gr \subset \Sigma \times [0,\infty)$ be the graph of $\smlhf \epsilon$. 
The continuity of $\epsilon$ implies that $\Gr$ is closed in $\Sigma \times [0,\infty)$ and
the smoothness of $\epsilon$ implies that $\Gr$ is a smooth submanifold.
From the description $E= (\pi, \rho)^{-1} (\Gr)$ we deduce that $E$ is closed in $T_\epsilon$.
The inclusion of a closed subspace is a proper map, and the composition of proper maps
is again proper. Hence the restriction of a proper map to a closed subspace is proper.
It follows that $(\pi,\rho):E\to \Sigma \times [0,\infty)$ is proper and then that
$(\pi,\rho):E\to \Gr$ is proper. The first factor projection $\pi_1 : \Sigma \times [0,\infty)\to \Sigma$
restricts to a diffeomorphism $\pi_1: \Gr \to \Sigma$, which is in particular a proper map.
The commutative diagram
\begin{equation} \label{dia.pirhopi1}
\xymatrix{
E \ar[rd]_{\pi} \ar[r]^{(\pi,\rho)} & \Gr \ar[d]_{\cong}^{\pi_1} \\
& \Sigma
} \end{equation}
shows that $\pi:E \to \Sigma$ is proper.

We prove next that $\pi:E\to \Sigma$ is surjective:
Given $s\in \Sigma$, the surjectivity of
$(\pi,\rho): T_\epsilon \to \Sigma \times [0,\epsilon)$ implies that there
is a point $x\in T_\epsilon$ such that $(\pi (x),\rho (x))=(s,\smlhf \epsilon (s))$, that is,
$\rho (x) =\smlhf \epsilon (\pi (x))$. This means that $x\in E$ and $\pi (x)=s$. 

By Lemma \ref{lem.preimsubm}, applied to the smooth map
$(\pi,\rho): T \setminus \Sigma \to \Sigma \times (0,\infty)$ and $Q=\Gr$,
$E= (\pi, \rho)^{-1} (\Gr)$ is a smooth submanifold and
$(\pi, \rho):E\to \Gr$ is a submersion. Using the diagram (\ref{dia.pirhopi1}),
$\pi: E\to \Sigma$ is a submersion. 

Applying Ehresmann's fibration theorem (for a modern exposition see \cite{dundas}) to the
proper, surjective, smooth submersion $\pi:E \to \Sigma$ yields the desired conclusion.
\end{proof}

We call the bundle given by Proposition \ref{prop.linkbundle} the \emph{link bundle} of $\Sigma$ in $X$.
The fiber is the \emph{link} of $\Sigma$. In this manner, $\Sigma$ becomes the base space
$B$ of a bundle and thus we will also use the notation $\Sigma =B$.
More generally, this construction evidently applies to the following class of spaces:

\begin{definition}\label{depth1}
A \emph{stratified pseudomanifold of depth $1$} is a tuple 
$\left(X,\Sigma_1,\cdots, \Sigma_r\right)$ such that the $\Sigma_i$ are mutually disjoint subspaces of $X$ such that $\left(X\setminus\left( \bigcup_{j\neq i}\Sigma_j\right), \Sigma_i\right)$ is a two strata pseudomanifold for every $i=1,\ldots,r$.
\end{definition}
In a depth $1$ space, every $\Sigma_i$ possesses its own link bundle.
\begin{definition}\label{Witt}
A stratified pseudomanifold of depth $1$, $\left(X,\Sigma_1,\cdots, \Sigma_r\right)$, is a \emph{Witt space} 
if the top stratum $X\setminus \bigcup \Sigma_i$ is oriented and the following condition is satisfied:
\begin{itemize}
\item For each $1\leq i \leq r$ such that $\Sigma_i$ has odd codimension $c_i$ in $X$, 
  the middle dimensional homology of the link $L_i$ vanishes:
	$$\rH{L_i}{\frac{c_i-1}{2}} = 0.$$
\end{itemize}
\end{definition}
Witt spaces were introduced by P. Siegel in \cite{SI}.
He assumed them to be endowed with a piecewise linear structure, as PL methods
allowed him to compute the bordism groups of Witt spaces.
We do not use these computations in the present paper.

\section{Intersection Spaces and Poincar\'e Duality}
\label{sec.intspacespd}

Let $\left(X,B\right)$ be a two strata pseudomanifold such that $X$ is $n$-dimensional, and $B$ is $b$-dimensional
and nonempty.
The Thom-Mather control data provide a tubular neighborhood $T$ of $B$ in $X$ and a distance function $\rho:T\rightarrow [0,\infty)$.  
Let $\epsilon:\Sigma =B \to (0,\infty)$ be the smooth function provided by
Proposition \ref{prop.linkbundle} such that
$\pi:E\to B$ is a fiber bundle, where
$E= \{ x\in T ~|~ \rho (x) = \epsilon (\pi (x)) \}$.
Let $M$ be the complement in $X$ of $T_\epsilon = \{ x\in T ~|~ \rho (x) < \epsilon (\pi (x)) \}$ and
let $L$ be the fiber of $\pi:E\to B$. By the surjectivity of $\pi$, $L$ is not empty.
The space $M$ is a smooth $n$-dimensional manifold with boundary $\partial M =E$.
Let $c=\dim L =n-1-b$.  Fix a perversity $\ov{p}$ satisfying the Goresky-MacPherson growth
conditions $\ov{p}(2)=0,$ $\ov{p}(s) \leq \ov{p}(s+1) \leq \ov{p}(s)+1$ for all $s\in \{ 2,3,\ldots \}$.
Set $k=c - \ov{p}\left(c+1\right)$. The growth conditions ensure that $k>0$.
Let $\ov{q}$ be the dual perversity to $\ov{p}$. The integer $l = c-\ov{q}\left(c+1\right)$ is positive.  
Assume that there exist $G$-equivariant Moore approximations of degree $k$ and $l$,  
$$\tr{f}{k}:\tr{L}{k}\rightarrow L \text{ and } \tr{f}{l}:\tr{L}{l}\rightarrow L$$
for some choice of structure group $G$ for the bundle $\pi:E\to B$.

We perform the fiberwise truncation and cotruncation
of Section \ref{fiberwise} on the link bundle $\pi: E=\partial M \rightarrow B$,  
use these constructions to define two incarnations of
intersection spaces, $I^{\ov{p}}X$ and $J^{\ov{p}}X$ associated to $X$, 
and show that they are homotopy equivalent.  The first, $I^{\ov{p}}X,$ agrees with the original definition 
given by the first author in \cite{BA} in all cases where they can be compared, the second $J^{\ov{p}}X$ has not 
been given before. It is introduced here to facilitate certain computations.

\begin{definition}\label{IX}
Define the map $\tr{\tau}{k}:\ft{E}{k} \rightarrow M$ to be the composition 
$$\xymatrix{
\tr{\tau}{k}: \ft{E}{k} \ar[r]^{\tr{F}{k} }& E=\partial M \ar@{^{(}->}[r]^(.6)i & M 
}\!\!,$$
where $i$ is the canonical inclusion of $\partial M$ as the boundary. 
Define $I^{\ov{p}}X$ to be the homotopy cofiber of $\tr{\tau}{k},$ i.e. the
homotopy pushout of the pair of maps
	$$\xymatrix{  \star & \ft{E}{k} \ar[l] \ar[r]^{\tr{\tau}{k}} & M.}$$
This is called the \emph{$\ov{p}$-intersection space} for $X$ defined via truncation.
If $E\cong B\times L$ is a product bundle, then this agrees with 
\cite[Definition 2.41]{BA}.
\end{definition}

\begin{definition}\label{JX}
In Section \ref{fiberwise}, we obtained the map $\ctr{C}{k}:E\rightarrow \ctr{Q}{k}E$.  
Define the $\ov{p}$-intersection space for $X$ \emph{via cotruncation}, $J^{\ov{p}}X$, to be the space obtained 
as the homotopy pushout of
	$$\xymatrix{ \ctr{Q}{k} & E \ar[l]_{\ctr{C}{k}} \ar@{^{(}->}[r]^{i} & M.}$$
\end{definition}

We have the following diagram of topological spaces, commutative up to homotopy, in which every square is a homotopy pushout square:
	$$\xymatrix{ \ft{E}{k} \ar[r]^{\tr{F}{k}} \ar[d]^{\tr{\pi}{k}} & 
E \ar[d]^{\ctr{c}{k}} \ar[r]^i & M \ar[dd]^{\ctr{\eta}{k}} \\ 
B \ar[r]^{\sigma} \ar[d] & \cft{E}{k} \ar[d]^{\ctr{\xi}{k}} & \\ 
\star \ar[r]^{[c]} & \ctr{Q}{k}E \ar[r]^{\ctr{\nu}{k}} & J^{\ov{p}}X, }$$
where $\ctr{\eta}{k}$ and $\ctr{\nu}{k}$ are defined to be the maps coming from the 
definition of $J^{\ov{p}}X$ as a homotopy pushout.  

\begin{lemma}  \label{lem.ipxhejpx}
The canonical collapse map $J^{\ov{p}}X \to I^{\ov{p}}X$ is a homotopy equivalence.
\end{lemma}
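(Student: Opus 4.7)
The plan is to verify that the canonical collapse arises as the equivalence produced by two successive applications of the pasting lemma for homotopy pushouts. The statement of the pasting lemma we need is: given a $2\times 2$ grid of squares in which each constituent square is a homotopy pushout, the outer rectangle is again a homotopy pushout. In the setting of the explicit point-set model of Definition~\ref{pushoutdef}, this may be verified directly by exhibiting a canonical homeomorphism between iterated pushouts, or obtained formally from two applications of the universal property of Remark~\ref{pushoutuniversal}.

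The first step is to read off the three homotopy pushout squares present in the diagram immediately preceding the lemma: the upper-left square exhibits $\cft{E}{k}$ as the homotopy pushout of $B\stackrel{\tr{\pi}{k}}{\leftarrow}\ft{E}{k}\stackrel{\tr{F}{k}}{\to}E$; the lower-left square exhibits $\ctr{Q}{k}E$ as the homotopy pushout of $\star\leftarrow B\stackrel{\sigma}{\to}\cft{E}{k}$; and the tall right-hand rectangle exhibits $J^{\ov{p}}X$ as the homotopy pushout of $\ctr{Q}{k}E\stackrel{\ctr{C}{k}}{\leftarrow}E\stackrel{i}{\hookrightarrow}M$ (Definition~\ref{JX}).

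Pasting the two left squares vertically, the pasting lemma gives that the rectangle with corners $\ft{E}{k}, E, \star, \ctr{Q}{k}E$ is a homotopy pushout; equivalently, $\ctr{Q}{k}E$ is canonically the homotopy pushout of $\star\leftarrow \ft{E}{k}\stackrel{\tr{F}{k}}{\to} E$, with right vertical edge the composition $\ctr{\xi}{k}\circ \ctr{c}{k}=\ctr{C}{k}$. Pasting this rectangle horizontally with the right-hand pushout defining $J^{\ov{p}}X$, one more application of the pasting lemma yields that the outer rectangle with corners $\ft{E}{k}, M, \star, J^{\ov{p}}X$ is a homotopy pushout; its top edge is the composition $i\circ \tr{F}{k}=\tr{\tau}{k}$ and its bottom-left corner is a point. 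But this is precisely the diagram whose homotopy pushout is, by Definition~\ref{IX}, the intersection space $I^{\ov{p}}X$. The universal property therefore produces a canonical homotopy equivalence $J^{\ov{p}}X\to I^{\ov{p}}X$.

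The remaining task is to check that this canonical comparison agrees with the stated collapse map: a definition chase verifying that it is the identity on $M\subset J^{\ov{p}}X$, sends the cone point of $\ctr{Q}{k}E$ to the cone point of $I^{\ov{p}}X$, and routes the cylinder $E\times I\subset J^{\ov{p}}X$ coming from $\ctr{C}{k}$ into the cone on $\ft{E}{k}$ sitting in $I^{\ov{p}}X$ via $\tr{F}{k}$. This point-set bookkeeping is the only minor obstacle; the substantive content of the proof is exhausted by the two pasting steps.
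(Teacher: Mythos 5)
Your strategy --- pasting the three homotopy pushout squares of the diagram preceding the lemma so as to exhibit $J^{\ov{p}}X$ as a homotopy pushout of $\star \leftarrow \ft{E}{k} \xrightarrow{\tr{\tau}{k}} M$ --- is a genuinely different route from the paper's. The paper argues directly with the collapse map: $J^{\ov{p}}X$ contains the cone $cB$ as a subspace, $(J^{\ov{p}}X,cB)$ is an NDR-pair, so collapsing the contractible subspace $cB$ is a homotopy equivalence, and the quotient $J^{\ov{p}}X/cB$ is homeomorphic to $I^{\ov{p}}X$. Your approach is more structural and would generalize, but the paper's two-line argument is self-contained within the toolkit of Section \ref{pushout}, whereas yours imports the pasting law for homotopy pushouts, which the paper never establishes.

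Two points in your write-up need repair. First, the justifications you offer for the pasting law do not work as stated: there is no canonical homeomorphism between the iterated pushout and the one-step pushout of the composite span (for the vertical pasting, the iterated model $\ctr{Q}{k}E$ contains the cone $cB$ and the mapping cylinder of $\tr{\pi}{k}$, while the one-step model is the mapping cone of $\tr{F}{k}$; these are only homotopy equivalent), and the universal property of Remark \ref{pushoutuniversal} is a strict point-set property of the double mapping cylinder --- it produces comparison maps but says nothing about their being homotopy equivalences. Supplying that homotopy-invariance input (collapsing a contractible piece along a cofibration, or a gluing lemma) is precisely the content of the paper's own proof, so your argument ultimately needs the same NDR-type step it is trying to bypass. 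Second, the equivalence the pasting law yields naturally goes in the direction $I^{\ov{p}}X \to J^{\ov{p}}X$: it is the canonical map from the double mapping cylinder of $\star \leftarrow \ft{E}{k} \to M$ to the corner space. The lemma concerns the collapse map $J^{\ov{p}}X \to I^{\ov{p}}X$, which goes the other way, so ``agreeing with the collapse map'' really means showing the collapse map is a homotopy inverse of this comparison map (for instance, that the composite $I^{\ov{p}}X \to J^{\ov{p}}X \to I^{\ov{p}}X$ is homotopic to the identity via a reparametrization of the cone coordinate). That verification is routine, but it is more than point-set bookkeeping and should be carried out rather than deferred.
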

\begin{proof}
By construction, the space $J^{\ov{p}}X$ contains the cone on $B$, $cB,$ as a subspace and 
$(J^{\ov{p}}X, cB)$ is an NDR-pair. Since $cB$ is contractible, the collapse map
$J^{\ov{p}}X \to J^{\ov{p}}X/cB$ is a homotopy equivalence. The quotient
$J^{\ov{p}}X/cB$ is homeomorphic to $I^{\ov{p}}X$.
\end{proof}

The sequence 
\[ \operatorname{ft}_{<k} E \stackrel{\tau_{<k}}{\longrightarrow} M
  \longrightarrow \operatorname{cone} (\tau_{<k}) = I^{\ov{p}} X \]
induces a long exact sequence
\begin{equation} \label{JPXcmv}
\xymatrix{ \ar[r] & 
H^{r-1} (\ft{E}{k}) \ar[r]^{\delta^{\ov{p},r}} & \rrC{I^{\ov{p}}X}{r}  \ar[r]^{\ctr{\eta}{k}^r}& 
H^r (M)  \ar[r]^{\tr{\tau}{k}^r} & H^r (\ft{E}{k}) \ar[r]&. }
\end{equation}
Furthermore, we can define $\hat{M}$ to be the homotopy pushout of the pair of maps 
	$$\xymatrix{ \star & \partial M = E \ar[l] \ar[r]^i & M.}$$  
This is nothing but the space $M$ with a 
cone attached to the boundary.  Define $J^{-1}X$ to be the homotopy pushout obtained from the pair of maps 
	$$\xymatrix{ \star & \ctr{Q}{k}E \ar[l] \ar[r]^{\ctr{\nu}{k}} & J^{\ov{p}}X. }$$
\begin{lemma}  
The canonical collapse map $J^{-1}X \to \hat{M}$ is a homotopy equivalence.
\end{lemma}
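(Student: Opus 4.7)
The plan is to mimic the proof of Lemma \ref{lem.ipxhejpx} almost verbatim, only replacing $(J^{\ov{p}}X, cB)$ by $(J^{-1}X, cQ_{\geq k}E)$. By its very definition as the homotopy pushout of $\star \leftarrow Q_{\geq k}E \to J^{\ov{p}}X$, the space $J^{-1}X$ contains a canonical copy of the cone $cQ_{\geq k}E$, namely the image of $Q_{\geq k}E \times [0,1]$ in the pushout, with $Q_{\geq k}E \times \{0\}$ collapsed to the point coming from the $\star$ factor. First I would check that $(J^{-1}X, cQ_{\geq k}E)$ is an NDR-pair; this is standard for homotopy pushouts since the cylinder factor produces a genuine cofibration. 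Because $cQ_{\geq k}E$ is contractible, collapsing it yields a homotopy equivalence
\[
J^{-1}X \;\stackrel{\simeq}{\longrightarrow}\; J^{-1}X / cQ_{\geq k}E.
\]

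Next I would identify the quotient with $\hat{M}$. Since $cQ_{\geq k}E \subset J^{-1}X$ contains $Q_{\geq k}E$ at its base, collapsing $cQ_{\geq k}E$ has the effect of collapsing the subspace $Q_{\geq k}E \subset J^{\ov{p}}X$ to a single point. Writing $J^{\ov{p}}X$ concretely as the quotient of $E \times I \sqcup Q_{\geq k}E \sqcup M$ by the relations $(x,0)\sim C_{\geq k}(x)$ and $(x,1)\sim i(x)$, the quotient $J^{\ov{p}}X / Q_{\geq k}E$ is obtained by additionally identifying all of $Q_{\geq k}E$ to a point $\ast$. The resulting identification $(x,0) \sim \ast$ collapses $E\times \{0\}$ to a point, turning the cylinder $E\times I$ into the cone $cE$, still attached to $M$ along $E\times \{1\}$ via $i$. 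This is exactly the homotopy pushout of $\star \leftarrow E \xrightarrow{i} M$ defining $\hat{M}$. Hence
\[
J^{-1}X / cQ_{\geq k}E \;\cong\; \hat{M},
\]
and one verifies that under this homeomorphism, the collapse map agrees with the canonical map $J^{-1}X \to \hat{M}$ obtained from the universal property of the homotopy pushout (with $J^{\ov{p}}X \to \hat{M}$ sending $Q_{\geq k}E$ to the cone point, $M$ to itself, and the mapping cylinder $E\times I$ across the cone of $\hat{M}$).

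The arguments are entirely formal manipulations of homotopy pushouts; the only mild subtlety is the NDR-pair verification for $cQ_{\geq k}E \subset J^{-1}X$ and the compatibility of the homeomorphism $J^{-1}X/cQ_{\geq k}E \cong \hat{M}$ with the canonical collapse map, neither of which presents a genuine difficulty once one works with the explicit quotient models for both homotopy pushouts.
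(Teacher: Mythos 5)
Your proposal is correct and follows essentially the same route as the paper: the paper's proof likewise observes that $cQ_{\geq k}E \subset J^{-1}X$ is a contractible subspace forming an NDR-pair, so the collapse map is a homotopy equivalence, and that the quotient $J^{-1}X/cQ_{\geq k}E$ is homeomorphic to $\hat{M}$. Your explicit identification of the quotient with the homotopy pushout of $\star \leftarrow E \xrightarrow{i} M$ simply spells out details the paper leaves implicit.
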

\begin{proof}
The space $J^{-1}X$ contains the cone $cQ_{\geq k} E$ as a subspace and 
$(J^{-1} X, cQ_{\geq k} E)$ is an NDR-pair. Thus the collapse map
$J^{-1}X \to J^{-1}X/cQ_{\geq k} E$ is a homotopy equivalence. The quotient
$J^{-1}X/cQ_{\geq k} E$ is homeomorphic to $\hat{M}$.
\end{proof}
By the lemma, using $l$ and $\ov{q}$ instead of $k$ and $\ov{p}$, we have the 
long exact sequence \eqref{pushoutMVred} associated to $J^{-1}X$:
\begin{equation}  \label{JQXhmv} 
\xymatrix{ \ar[r] & \rrH{\ctr{Q}{l}E}{r}  \ar[r]^{\ctr{\nu}{l,r}}   & 
\rrH{J^{\ov{q}}X}{r}   \ar[r]^{\ctr{\zeta}{l,r}} & 
\rH{M,\partial M}{r}\ar[r]^{\delta^{\ov{q}}_r} & 
\rrH{\ctr{Q}{l}E}{r-1} \ar[r] &, }
\end{equation}
where $\ctr{\zeta}{l}$ is the composition of the map $J^{\ov{q}}X\rightarrow J^{-1}X,$ 
defined by $J^{-1}X$ as a homotopy pushout, with the collapse map
$J^{-1} X \stackrel{\simeq}{\longrightarrow} \hat{M}$.
In the sequence, we have identified $\redH_r (\hat{M}) \cong \rH{M,\partial M}{r}$.  

\begin{theorem}\label{JXPD}
Let $\left(X,B\right)$ be a compact, oriented, two strata pseudomanifold of dimension $n$. 
Let $\ov{p}$ and $\ov{q}$ be 
complementary perversities, and $k=c-\ov{p}\left(c+1\right)$, $l=c-\ov{q}\left(c+1\right)$,
where $c=n-1-\dim B$.
Assume that equivariant Moore approximations to $L$ of degree $k$ and degree $l$ exist. 
If the local duality obstructions $\ob_* (\pi,k,l)$ of the link bundle $\pi$ vanish, 
then there is a global Poincar\'e duality isomorphism 
\begin{equation} \label{equ.ipiqpd}
\rrC{I^{\ov{p}}X}{r} \cong \rrH{I^{\ov{q}}X}{n-r}.
\end{equation}
\end{theorem}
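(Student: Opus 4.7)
The plan is to derive the duality isomorphism \eqref{equ.ipiqpd} by a five--lemma comparison of two long exact sequences. By Lemma \ref{lem.ipxhejpx} it suffices to produce an isomorphism $\rrC{I^{\ov{p}}X}{r} \cong \rrH{J^{\ov{q}}X}{n-r}$. I would stack the cohomology sequence \eqref{JPXcmv}, induced by the cofiber $\ft{E}{k} \xrightarrow{\tr{\tau}{k}} M \to I^{\ov{p}}X$, on top of the homology sequence \eqref{JQXhmv} (read in degree $n-r$) coming from the pushout definition of $J^{\ov{q}}X$ together with the homotopy equivalence $J^{-1}X \simeq \hat{M}$. The outer four vertical arrows of the resulting ladder are the Poincar\'e--Lefschetz duality $D_M: H^*(M) \xrightarrow{\cong} H_{n-*}(M, \partial M)$ for the compact oriented $n$-manifold with boundary $M$, and the fiberwise duality $D_{k,l}: H^*(\ft{E}{k}) \xrightarrow{\cong} \rrH{\ctr{Q}{l}E}{n-1-*}$ from Proposition \ref{bundle duality}. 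All four are isomorphisms.

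The substance of the proof lies in verifying commutativity of the two outer squares of the ladder, namely the ones involving $H^\bullet(M) \to H^\bullet(\ft{E}{k})$ on top and the connecting homomorphism $\delta^{\ov{q}}$ on the bottom. Since $\tr{\tau}{k}^* = \tr{F}{k}^* \circ i^*$, where $i: E = \partial M \hookrightarrow M$, this splits into two compatibilities. Proposition \ref{prop.dob0} --- this is the unique point at which the hypothesis $\ob_*(\pi, k, l) = 0$ is invoked --- supplies
\[ D_{k,l} \circ \tr{F}{k}^* \;=\; \ctr{C}{l *} \circ D_E, \]
and the standard naturality of Poincar\'e--Lefschetz duality on $(M, \partial M)$ (the analogue of \eqref{boundary PD} in ordinary (co)homology) gives $D_E \circ i^* = \partial \circ D_M$, where $\partial: H_*(M, \partial M) \to H_{*-1}(\partial M)$ is the pair boundary. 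Unwinding the definition of $\delta^{\ov{q}}$ in \eqref{JQXhmv} through the Mayer--Vietoris sequence \eqref{pushoutMVred} for the pushout defining $J^{-1}X$ and the identification $\rrH{J^{-1}X}{*} \cong H_*(M, \partial M)$, one shows that $\delta^{\ov{q}} = \ctr{C}{l *} \circ \partial$. Concatenating these three identities produces the desired commutativity of the outer squares.

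With exact rows, commutative outer squares, and four outer vertical isomorphisms in place, a standard five--lemma style argument (well--definedness of the middle arrow is obtained by a diagram chase using exactness of both rows and commutativity of both outer squares) produces a unique isomorphism $D: \rrC{I^{\ov{p}}X}{r} \xrightarrow{\cong} \rrH{J^{\ov{q}}X}{n-r}$ completing the ladder. Composing with the isomorphism $\rrH{J^{\ov{q}}X}{n-r} \cong \rrH{I^{\ov{q}}X}{n-r}$ coming from Lemma \ref{lem.ipxhejpx} yields the Poincar\'e duality isomorphism \eqref{equ.ipiqpd}.

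The hardest step will be the explicit identification of $\delta^{\ov{q}}$ as the composite $\ctr{C}{l *} \circ \partial_{(M, \partial M)}$: it requires tracing a relative class through the chain of pushouts defining $J^{\ov{q}}X$, $J^{-1}X$, and $\hat{M}$, and through the Mayer--Vietoris connecting map, with careful attention to sign conventions so that Poincar\'e--Lefschetz naturality applies on the nose. Once that identification is secured, Proposition \ref{prop.dob0} does the decisive work: it converts the vanishing of the local duality obstructions directly into the compatibility of $D_{k,l}$ with the pair boundary, which is precisely what couples the two exact sequences through Poincar\'e duality.
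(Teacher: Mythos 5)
Your proposal is correct and takes essentially the same route as the paper's proof: the same ladder pairing \eqref{JPXcmv} with \eqref{JQXhmv}, the outer verticals $D_M$ and the truncation--cotruncation duality, commutativity of the outer squares via Proposition \ref{prop.dob0} (which the paper combines with Proposition \ref{prop.obzerodisdkl} to identify the resulting map with $D_{k,l}$) together with Lefschetz naturality, and a five-lemma fill-in of the middle arrow, concluded via Lemma \ref{lem.ipxhejpx}. The identification $\delta^{\ov{q}} = C_{\geq l*}\circ \partial$ that you flag as the delicate step is treated in the paper as immediate from the construction of \eqref{JQXhmv}.
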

\begin{proof}
We achieve this by pairing the sequence (\ref{JPXcmv}) with the sequence (\ref{JQXhmv}) 
(observing Lemma \ref{lem.ipxhejpx}) and using the five lemma.  
Consider the following diagram of solid arrows whose rows are exact:
\begin{equation} \label{dia.globalpd}
\xymatrix@C=20pt{ 
\ar[r] & H^{r-1} (\ft{E}{k}) \ar[r]^{\delta^{\ov{p},*}}\ar[d]^{D^{r-1}_{k,l}}_\cong 
   & \rrC{I^{\ov{p}}X}{r}  \ar@{-->}[d]^{D^r_{IX}}\ar[r]^{\ctr{\eta}{k}^*}& 
H^r (M)  \ar[r]^{\tr{\tau}{k}^*} \ar[d]^{D_{M}^r}_\cong & 
H^r (\ft{E}{k}) \ar[d]^{D^{r}_{k,l}}_\cong & \\
\ar[r] & \rrH{\ctr{Q}{l}E}{n-r}  \ar[r]^{\ctr{\nu}{l,*}}   & \rrH{I^{\ov{q}}X}{n-r}   \ar[r]^{\ctr{\zeta}{l,*}}& 
\rH{M,\partial M}{n-r}\ar[r]^{\delta^{\ov{q}}_{*}}& \rrH{\ctr{Q}{l}E}{n-r-1}  & 
} \end{equation}
Here $D^r_{k,l}$ comes from Proposition \ref{bundle duality}, and $D_M^r$ comes from the classical Lefschetz 
duality for manifolds with boundary. The solid arrow square on the right can be written as
$$\xymatrix{ 
H^r (M) \ar[r]^{i^*} \ar[d]^{D^r_M}_\cong & 
H^r (\partial M) \ar[r]^{\tr{F}{k}^*} \ar[d]^{D^r_{\partial M}}_\cong & 
H^r (\ft{E}{k}) \ar[d]^{D^r_{k,l}}_\cong \\ 
\rH{M,\partial M}{n-r} \ar[r]^{\delta^{M,\partial M}_{*}} & 
H_{n-r-1} (\partial M) \ar[r]^{\ctr{C}{l,*}} & \rrH{\ctr{Q}{l}E}{n-r-1}	
}$$
The left square commutes by classical Poincar\'e-Lefschetz duality, and the right square 
commutes by 
Proposition \ref{prop.dob0} and Proposition \ref{prop.obzerodisdkl}, since $\ob_* (\pi,k,l)=0$.
Thus diagram (\ref{dia.globalpd}) commutes. By e.g. \cite[Lemma 2.46]{BA}, 
we may find a map $D^r_{IX}$ to fill in the dotted 
arrow so that the diagram commutes.  By the five lemma, $D^r_{IX}$ is an isomorphism.
\end{proof}

It does not follow from this proof that for a $4d$-dimensional Witt space $X$ the associated intersection form
$\redH_{2d} (IX)\times \redH_{2d} (IX)\to \rat$ is symmetric, where $IX = I^{\bar{m}} X =
I^{\bar{n}} X$. In Section \ref{sec.signintsp}, however, we shall prove that the isomorphism
(\ref{equ.ipiqpd}) can always be constructed so as to yield a symmetric intersection form
(cf. Proposition \ref{prop.dixsymmetric}).

\section{Moore Approximations and the Intersection Homology Signature}
\label{sec.ihsign}

Assume that $\left(X,B\right)$ is a two-strata Witt space with $\dim X = n=4d$, $d> 0$, and 
$\dim B = b$, then $c=4d-1-b=\dim L$.  If we use the upper-middle perversity $\ov{n}$ and the 
lower-middle perversity $\ov{m}$, which are complementary, we get the associated pair of integers 
$k = \lfloor \frac{c+1}{2} \rfloor$ and $l =\lceil \frac{c+1}{2} \rceil$.  When $c$ is odd then 
$k=l=\frac{c+1}{2}$, and when $c$ is even then $k=c/2$ and $l=k+1$.  Notice that the codimension of 
$B$ in $X$ is $c+1$.  So the Witt condition says that when $c$ is even then $\rH{L}{\frac{c}{2}}=0$.  
In this case if an equivariant Moore approximation of degree $k$ exists, then so does one of 
degree $k+1 =l$ and they can be chosen to be equal.  Therefore, when $X$ satisfies the Witt condition 
and an equivariant Moore approximation to $L$ of degree $k$ exists, we can construct 
$I^{\ov{m}}X = I^{\ov{n}}X$ and
$J^{\ov{m}}X = J^{\ov{n}}X$.  We denote the former space $IX$ and the latter $JX$ and call 
this homotopy type the \emph{intersection space} associated to the Witt space $X$.

The cone bundle $DE$ is nothing but $\cft{E}{c+1}$ with $\tr{L}{c+1}=L$.  
Note that when $E=\partial M$ as above, then $DE$ is a two strata space with boundary $\partial DE = \partial M$, 
and we can realize $X$ as the pushout of the pair of maps $\xymatrix{ M & \partial M \ar[l]_i \ar[r]^{\ctr{c}{c+1}} & DE}$.  
Thus $\partial M$ is bi-collared in $X$ and by Novikov additivity, Prop. II,3.1 \cite{SI}, we have that the 
intersection homology Witt element $w_{IH}$, 
defined in I,4.1 \cite{SI}, is additive over these parts,
\begin{equation} \label{equ.novadd}
w_{IH}\left(X\right) = w_{IH}(\hat{M}) + w_{IH}\left(TE\right)\in W\left(\Q\right),
\end{equation}
where the Thom space $TE$ is $DE$ with a cone attached to its boundary, 
and $W\left(\Q\right)$ is the Witt group of $\Q$. 
When $X$ is Witt, we write $IH_* (X)$ for $I^{\bar{m}} H_* (X) = I^{\bar{n}} H_* (X)$.

\begin{proposition}\label{TEhomology1}
If an equivariant Moore approximation to $L$ of degree $k=\lfloor \frac{1}{2} (\dim L +1) \rfloor$ 
exists, then the middle degree, 
middle perversity intersection homology of the $n=4d$-dimensional Witt space $TE$ vanishes,
$$\ImH{TE}{2d} = 0.$$
\end{proposition}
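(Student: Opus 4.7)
The argument will be a Mayer--Vietoris computation in intersection homology for the natural decomposition $TE = DE \cup_E cE$, where $cE$ is the cone attached to $\partial DE = E$. The two key inputs are the cone formula (Proposition~\ref{Cone Formula}) applied to $cE$, and the surjectivity of $i^\partial_r(B)\colon H_r(E) \to \ImH{DE}{r}$ stated after Proposition~\ref{relationtoIH}.

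Since $\dim E = 4d-1$ and $\ov{m}(4d) = 2d-1$, the cone formula gives $\ImH{cE}{r} \cong H_r(E)$ for $r < 2d$ (via inclusion of the cone base $E \hookrightarrow cE$) and $\ImH{cE}{r} = 0$ for $r \geq 2d$. To invoke Proposition~\ref{relationtoIH} for $\ov{m}$, one needs a Moore approximation of degree $c - \ov{m}(c+1)$; this equals $k$ when $c$ is odd, and $k+1$ when $c$ is even, but under the Witt hypothesis $\rH{L}{c/2}=0$, so a degree-$k$ approximation serves in both cases (as already indicated in the discussion preceding the statement).

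The open cover of $TE$ by neighborhoods of $DE$ and of $cE$, meeting in an open collar of $E$, produces the Mayer--Vietoris sequence
\[
\cdots \to H_r(E) \stackrel{\alpha_r}{\longrightarrow} \ImH{DE}{r} \oplus \ImH{cE}{r} \to \ImH{TE}{r} \stackrel{\partial}{\longrightarrow} H_{r-1}(E) \stackrel{\alpha_{r-1}}{\longrightarrow} \cdots,
\]
with $\alpha_r = (i^\partial_r,\, -\iota_r)$, where $\iota_r\colon H_r(E) \to \ImH{cE}{r}$ is induced by inclusion. At $r = 2d$ the right summand vanishes and $\alpha_{2d} = i^\partial_{2d}$ is surjective, so $\partial\colon \ImH{TE}{2d} \to H_{2d-1}(E)$ is injective. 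At $r = 2d-1$, the cone formula makes $\iota_{2d-1}$ an isomorphism, so $\alpha_{2d-1}$ has trivial kernel (its second coordinate is $\pm \id$). By exactness $\partial = 0$, and combined with its injectivity this yields $\ImH{TE}{2d} = 0$.

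The main technical point to verify is the validity of the intersection-homology Mayer--Vietoris sequence for the pseudomanifold $TE$, which has two singular strata ($B$ and the newly introduced cone point) admitting disjoint neighborhoods compatible with the decomposition, together with the naturality needed to identify $\iota_{2d-1}$ with the isomorphism coming from the cone formula. Both are standard in the Friedman--McClure singular-chain framework already in use throughout the paper, so no essentially new idea is needed beyond the two results we are quoting.
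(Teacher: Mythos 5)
Your proof is correct and follows essentially the same route as the paper: the Mayer--Vietoris sequence for $TE = DE \cup_E cE$, the cone formula for $\ImH{cE}{r}$, and the surjectivity of $i^\partial_{2d}(B)\colon H_{2d}(E)\to \ImH{DE}{2d}$ from Proposition~\ref{relationtoIH} (including the correct handling of the Witt condition when $c$ is even). The only difference is cosmetic: you conclude by showing the connecting map is simultaneously injective and zero, whereas the paper shows the map into $\ImH{TE}{2d}$ is simultaneously surjective and zero.
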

\begin{proof}
In this proof we use the notation $\dot{c}E$ and $\dot{D}E$ to mean the open cone on $E$ and the 
open cone bundle associated to $E$.  According to (\ref{openclosed}), 
$I^{\ov{p}}H_r (\dot{D}E) \cong \IH{DE}{r}$, and 
$\IH{\dot{c}E}{r}\cong \IH{cE}{r}$ for all $r\geq 0$.  
Hence, as in the proof of Proposition \ref{Cone Formula}, we can 
identify the long exact sequence of intersection homology groups associated to the pair 
$(\dot{D}E,\dot{D}E\setminus B)$ with the same sequence associated to the 
$\partial$-stratified pseudomanifold $\left(DE,E\right)$ from \eqref{boundary PD}.

Define open subsets $U,V$ of $TE$ by 
$U=TE\setminus B=\dot{c}E$ and $V=TE\setminus c=\dot{D}E$, where $c$ is the cone point. 
Then $TE=U\cup V$ and $U\cap V = E\times (-1,1)$.  The Mayer-Vietoris sequence associated to the pair $\left(U,V\right)$ gives
\begin{equation} \label{TEMV}
\xymatrix{ 
\ar[r] & \rH{E}{r} \ar[r]^(.4){i_r^{TE}} & IH_r(\dot{D}E) \oplus\ImH{\dot{c}E}{r}\ar[r]^(.65){j_r^{TE}} 
& \ImH{TE}{r} \ar[r]^{\delta_r^{TE}} & \rH{E}{r-1} \ar[r] &
}\end{equation}
Here we have identified $\ImH{E\times(-1,1)}{r}\cong \rH{E}{r}$.  
After making the identifications as decribed in the previous paragraph, 
the map $i_r^{TE}= i_r^{DE}\oplus i_r^{cE}$ is identified as the sum of the maps coming 
from the sequences associated to the pairs $\left(DE,E\right)$ and $\left(cE,E\right)$ respectively.  
In degrees $r<2d$ we know from Proposition \ref{Cone Formula} that $i_r^{cE}$ is 
an isomorphism $\rH{E}{r}  =\ImH{cE}{r}$.  Thus $i_r^{TE}$ is injective for $r<2d$.  
Consequently, when $r=2d,$ we have an exact sequence
\[
\xymatrix{ 
\cdots \ar[r] & \rH{E}{2d} \ar[r] & 
\ImH{DE}{2d} \oplus IH_{2d} (cE)   \ar[r]
& \ImH{TE}{2d} \ar[r] & 0.
} \]
By the cone formula for intersection homology, $IH_{2d} (cE)=0$, since 
$2d = \dim E - \ov{m}(\dim E +1)$. Now by Proposition \ref{relationtoIH},
the map $H_{2d} (E)\to IH_{2d} (DE)$ is surjective.
\end{proof}

\begin{corollary}\label{IHsignature}
Let $X$ be a compact, oriented, $n=4d$-dimensional stratified pseudomanifold of depth $1$ 
which satisfies the Witt condition.  If equivariant Moore approximations of degree 
$k=\lfloor \frac{1}{2} (\dim L +1) \rfloor$ to the links of the singular set exist, then
	$$w_{IH}\left(X\right)=w_{IH} (\hat{M}) \in W\left(\Q\right).$$
In particular, the signature of the intersection form on intersection homology satisfies
	$$\sigma_{IH}\left(X\right)= \sigma_{IH} (\hat{M}).$$
\end{corollary}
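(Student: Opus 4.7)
The plan is to combine Novikov additivity with the vanishing statement of Proposition~\ref{TEhomology1}: each Thom-space contribution vanishes in $W(\Q)$ once a suitable Moore approximation is in place, so the Witt class of $X$ reduces to that of $\hat M$.

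Unpacking the geometry, for each singular stratum $\Sigma_i$ of the depth-$1$ Witt space $X$, Proposition~\ref{prop.linkbundle} supplies a link bundle $\pi_i : E_i \to \Sigma_i$ with fiber $L_i$, whose associated closed cone bundle $DE_i$ is homeomorphic to a tubular neighborhood of $\Sigma_i$. Writing $M$ for the compact $4d$-manifold obtained by excising the open tubes, one has $\partial M = \bigsqcup_i E_i$ and
\[
X \;=\; M \;\cup_{\partial M}\; \bigsqcup_{i=1}^r DE_i,
\]
with each $E_i$ bi-collared in $X$. Correspondingly $\hat M$ is $M$ with a cone attached along each $E_i$, and $TE_i = DE_i \cup_{E_i} cE_i$ is the Thom space of the $i$-th cone bundle.

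Next, I would iterate the two-strata Novikov additivity~\eqref{equ.novadd} once per bi-collared hypersurface $E_i$, peeling off $TE_i$ at each step, to obtain
\[
w_{IH}(X) \;=\; w_{IH}(\hat M) \;+\; \sum_{i=1}^r w_{IH}(TE_i) \;\in\; W(\Q).
\]
For each $i$, the hypothesis supplies an equivariant Moore approximation of $L_i$ of degree $k_i = \lfloor (\dim L_i + 1)/2 \rfloor$, so Proposition~\ref{TEhomology1} applies and yields $\ImH{TE_i}{2d} = 0$. Since the class $w_{IH}(TE_i)$ is represented by the symmetric intersection pairing on middle-dimensional intersection homology, the vanishing of this vector space forces $w_{IH}(TE_i) = 0$ in $W(\Q)$. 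The sum therefore collapses to $w_{IH}(X) = w_{IH}(\hat M)$. Applying the signature homomorphism $\sigma : W(\Q) \to \Z$ immediately gives $\sigma_{IH}(X) = \sigma_{IH}(\hat M)$.

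The main obstacle is not the algebra but the bookkeeping: one must verify that each intermediate space arising in the iteration of~\eqref{equ.novadd} — in particular each $TE_i$ and the partial cone-offs of $M$ culminating in $\hat M$ — is itself a Witt pseudomanifold, so that the Witt classes on the right-hand side are genuinely defined in $W(\Q)$. This reduces to a local check of the Witt condition on the links at the newly created cone-point strata, namely on the $E_i$ themselves in the odd-codimension case, and should be handled by combining the Witt hypothesis imposed on $X$ at $\Sigma_i$ with the link-bundle structure of $E_i$ over $\Sigma_i$. Once this is in place, the corollary is a direct consequence of~\eqref{equ.novadd} and Proposition~\ref{TEhomology1}.
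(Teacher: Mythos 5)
Your argument is correct and is essentially the paper's own proof: Novikov additivity applied along the bi-collared hypersurfaces $E_i$, combined with the vanishing $\ImH{TE_i}{2d}=0$ from Proposition~\ref{TEhomology1}, which kills each Thom-space contribution in $W(\Q)$. The Witt-condition bookkeeping you flag at the end is immediate, since the newly created cone-point strata have even codimension $4d$ (so the Witt condition is vacuous there) and the strata $\Sigma_i$ retain their links $L_i$ from $X$.
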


\begin{proof}
If $\ImH{TE}{2n}=0$, then $w_{IH}\left(TE\right)=0$. The assertion follows from
Novikov additivity (\ref{equ.novadd}).
\end{proof}

\begin{example}
Let $X=\mathbb{CP}^2$ be complex projective space with $B=\mathbb{CP}^1 \subset X$
as the bottom stratum, so that the link bundle is the Hopf bundle over $B$. Then
\[ \sigma_{IH} (X) = \sigma (\mathbb{CP}^2)=1, \]
but
\[ \sigma (M,\partial M) = \sigma (D^4, S^3) =0. \]
Indeed, the link $S^1$ in the Hopf bundle has no middle-perversity equivariant Moore-approximation
because the Hopf bundle has no section.
\end{example}

\section{The Signature of Intersection Spaces}
\label{sec.signintsp}

Theorem 2.28 in \cite{BA} states that for a closed, oriented, $4d$-dimensional Witt space $X$ with only isolated
singularities, the signature of the symmetric nondegenerate intersection form
$\redH_{2d} (IX)\times \redH_{2d} (IX)\to \rat$ equals the signature of the Goresky-MacPherson-Siegel
intersection form $IH_{2d} (X)\times IH_{2d} (X)\to \rat$ on middle-perversity intersection
homology. In fact, both are equal to the Novikov signature of the top stratum. We shall here
generalize that theorem to spaces with twisted link bundles that allow for equivariant Moore approximation.

\begin{definition}
Define the signature of a $4d$-dimensional manifold-with-boundary $\left(M,\partial M\right)$ to be 
	$$\sigma\left(M,\partial M\right) = \sigma\left( \beta \right),$$
where $\beta$ is the bilinear form
$$\beta: \im j_* \times \im j_* \rightarrow \Q,~ 
   (j_*  v, j_* w) \mapsto (d_M (v)) (j_* w),$$
the homomorphism
\[ j_*: H_{2d} (M) \longrightarrow H_{2d} (M,\partial M) \]
is induced by the inclusion, and
\[ d_M: H_{2d} (M) \longrightarrow H^{2d} (M, \partial M) \]
is Lefschetz duality.
This is frequently referred to as the \emph{Novikov signature} of $(M,\partial M)$.
It is well-known (\cite{SI}) that $\sigma (M, \partial M) = \sigma_{IH} (\hat{M})$.
\end{definition}

Let $(X,B)$ be a two strata Witt space with $\dim X = n =4d$, $\dim B=b$. We assume that an 
equivariant Moore approximation of degree $k = 4d-b-1-\ov{m}\left(4d-b\right)$ exists for the link $L$ of $B$ in $X$,
and that the local duality obstruction $\ob_* (\pi, k,k)$ vanishes.
As discussed in the previous section, this implies that the intersection space $IX$ exists and is well defined.  
Theorem \ref{JXPD} asserts that $IX$ satisfies Poincar\'e duality
\[ d_{IX}: \redH_{2d} (IX) \stackrel{\cong}{\longrightarrow} \redH^{2d} (IX). \]
We shall show (Proposition \ref{prop.dixsymmetric}) that $d_{IX}$ can in fact be so constructed 
that the associated intersection form
on the middle-dimensional homology is symmetric. One may then consider its signature:

\begin{definition}
The signature of the space $IX,$
	$$\sigma\left(IX\right) = \sigma\left( \beta \right),$$	
	is defined to be the signature of the symmetric bilinear form
	$$\beta: \redH_m (IX) \times \redH_m (IX) \rightarrow \Q,$$
with $m=2d,$ defined by 
	$$\beta (v,w)=d_{IX} (v)(w)$$
for any $v,w \in \redH_m (IX)$.  Here we have identified 
$\redH^m (IX) \cong \redH_m (IX)^\dagger$ via the universal coefficient theorem.
\end{definition}

\begin{theorem}\label{HIsignature}
The signature of $IX$ is supported away from the singular set $B$, that is,
	$$\sigma\left(IX\right) = \sigma\left(M,\partial M\right).$$
\end{theorem}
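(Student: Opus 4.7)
The plan is to follow the strategy of Banagl and Spiegel (cf.\ the proof of \cite[Theorem 2.28]{BA} for isolated singularities, later refined by Spiegel in \cite{SP}), adapting it to the twisted link bundle setting. I would exhibit a decomposition of the symmetric intersection form $\beta$ on $\widetilde{H}_{2d}(IX)$ as a Witt-orthogonal sum of a subspace carrying the Novikov form of $(M,\partial M)$ and a hyperbolic complement arising from the truncated link bundle.

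Fix the symmetric duality $D_{IX}$ furnished by Proposition \ref{prop.dixsymmetric}, and work with the middle-degree piece of the long exact sequence associated to the cofibre sequence $\ft{E}{k} \xrightarrow{\tau_{<k}} M \to IX$:
\begin{equation*}
H_{2d}(\ft{E}{k}) \xrightarrow{\tau_{<k,*}} H_{2d}(M) \xrightarrow{\eta_*} \widetilde{H}_{2d}(IX) \xrightarrow{\partial} H_{2d-1}(\ft{E}{k}) \xrightarrow{\tau_{<k,*}} H_{2d-1}(M).
\end{equation*}
Let $K = \im(\eta_*)$. Comparing with the cofibre sequence $\partial M \xrightarrow{i} M \to \hat{M}$ through the map $F_{<k}: \ft{E}{k} \to \partial M$ produces a natural surjection $K \twoheadrightarrow \im(j_*:H_{2d}(M) \to H_{2d}(M,\partial M))$. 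Using Lemma \ref{lem.flkcgkexact} together with the Witt condition, which ensures that the cokernel of $F_{<k,*}$ in $H_{2d}(\partial M)$ is concentrated in the cotruncation summand $\widetilde{H}_{2d}(\cft{E}{k})$, a diagram chase upgrades this surjection to an isomorphism. The right-hand solid square of the duality diagram (\ref{dia.globalpd}) then yields
\begin{equation*}
\beta(\bar u, \bar v) \;=\; \langle D_M(u),\, j_*(v)\rangle
\end{equation*}
for $u,v \in H_{2d}(M)$ mapping to $\bar u,\bar v \in K$. This is precisely the Novikov pairing on $\im(j_*)$, whose signature is $\sigma(M,\partial M)$.

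The remaining and genuinely hard step is to show that the complementary piece of $\widetilde{H}_{2d}(IX)$ is Witt-trivial. The quotient $\widetilde{H}_{2d}(IX)/K$ is isomorphic via $\partial$ to $\ker(\tau_{<k,*}|_{H_{2d-1}(\ft{E}{k})})$ and is paired under $D_{IX}$ with a subspace of classes pulled back from $\widetilde{H}^*(\cft{E}{k})$ via $C^*_{\geq k}$. The self-pairing in $\beta$ of this complement reduces to cup products of the form $C^*_{\geq k}(x)\cup C^*_{\geq k}(y)$ in complementary degrees, which vanish by the cotruncation cup product calculation underlying Proposition \ref{prop.globalprodobvan}; the cross-pairing with $K$ vanishes because $K$ is pulled back from $H^*(M)$ while the complement is annihilated by $\eta^*$; and the pairing between the two halves of the complement is nondegenerate by the bundle Poincar\'e duality of Proposition \ref{bundle duality}. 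Together these yield a metabolic structure on the complement, so $[\beta] = [\beta|_K]$ in $W(\mathbb{Q})$ and $\sigma(IX) = \sigma(M,\partial M)$. The main obstacle is to promote these scalar pairing statements to an honest orthogonal Witt decomposition of $\beta$; this is where Spiegel's chain-level construction from \cite{SP} is indispensable, here carried out with the fiberwise (co)truncation of Section \ref{fiberwise} and the local-to-global precosheaf machinery of Section \ref{sec.localtoglobal} in place of the isolated-singularity arguments of \cite{BA}.
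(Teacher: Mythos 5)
Your outline contains a structural error that breaks the proposed Witt decomposition. You set $K=\im \eta_*$ and claim a diagram chase (using the Witt condition) upgrades the surjection $K\twoheadrightarrow \im j_*$ to an isomorphism. This is false in general: since $C_{\geq k *}$ is surjective (Lemma \ref{lem.flkcgkexact}) and the column $\rrH{\ctr{Q}{k}E}{m}\xrightarrow{\nu_*}\redH_m(IX)\xrightarrow{\zeta_*}H_m(M,\partial M)$ in diagram (\ref{equ.ijetazetanuctauf}) is exact, one has $\ker\zeta_*=\im\nu_*=\im(\eta_* i_*)\subset \im\eta_*$; hence $\zeta_*$ maps $K$ onto $\im j_*$ with kernel the \emph{entire} space $L=\ker\zeta_*$, whose dimension $l$ is in general positive (it is precisely the size of the hyperbolic block in the final answer) and is not controlled by the Witt condition. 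Consequently $\beta|_K$ is degenerate with radical $L$, the claimed orthogonality between $K$ and a complement cannot hold (the radical $L\subset K$ must pair perfectly with the complement for $\beta$ to be nondegenerate), and your picture of ``two halves of the complement'' pairing with each other is not the actual geometry: one leg of the hyperbolic pairing lies inside $\im\eta_*$. In addition, the vanishing of the complementary cup products $C^*_{\geq k}(x)\cup C^*_{\geq k}(y)$ is not supplied by the computation behind Proposition \ref{prop.globalprodobvan}, which is special to product bundles; in the twisted case this vanishing is exactly the standing hypothesis $\ob_*(\pi,k,k)=0$, and its role is to produce a duality isomorphism compatible with the exact ladder (Propositions \ref{prop.dob0} and \ref{prop.obzerodisdkl}), not to kill self-pairings directly.

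The deeper gap is that the genuinely hard step is deferred rather than carried out. You invoke Proposition \ref{prop.dixsymmetric} at the outset and then say the promotion to an honest decomposition is ``where Spiegel's chain-level construction is indispensable''; but that proposition \emph{is} the core of the paper's proof of the theorem, and its proof is exactly the missing construction: one chooses a basis $\{u_j\}$ of $L=\ker\zeta_*$, lifts it through $\rrH{\ctr{Q}{k}E}{m}$ and $H_m(\partial M)$ to classes $\overline u_j$, builds the annihilator $Q$ and a complement $W\supset \eta_*\langle\overline e_i\rangle$, sets $V=(\zeta_*|_W)^{-1}(\im j_*)$ and $L'=(\zeta_*|_W)^{-1}(Q\cap\im\zeta_*)$, \emph{defines} $d_{IX}$ on the resulting basis so that its Gram matrix is $\left(\begin{smallmatrix}0&0&I\\0&S&0\\I&0&0\end{smallmatrix}\right)$ with $S$ the Novikov form, and then verifies that this $d_{IX}$ is a legitimate Poincar\'e duality isomorphism by checking the compatibility squares (\ref{equ.diazetadixetadmp}) and (\ref{equ.dianudixdkkdelta}). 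None of this appears in your proposal; and if one is instead allowed to quote Proposition \ref{prop.dixsymmetric} as a black box, the theorem follows in one line from its matrix statement, making the intermediate argument you sketch both unnecessary and, in its specific claims ($K\cong\im j_*$, orthogonality of $K$ and its complement), incorrect. A corrected abstract version of your idea does exist --- $L$ is isotropic, $L^{\perp}=L\oplus V$, and $L^{\perp}/L$ carries the Novikov form --- but establishing it requires exactly the explicit lifts and the verified compatibility of $d_{IX}$ with the ladder that the paper supplies.
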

Before we prove this theorem, we note that in view of Corollary \ref{IHsignature}, we
immediately obtain:
\begin{corollary}
If a two-strata Witt space $(X,B)$ allows for middle-perversity equivariant Moore-approximation of
its link and has vanishing local duality obstruction, then
\[ \sigma_{IH}(X) = \sigma (IX). \]
\end{corollary}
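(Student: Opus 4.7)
The plan is to compute $\sigma(IX)$ by decomposing the middle-dimensional intersection form $\beta$ on $\rrH{IX}{2d}$ orthogonally into a hyperbolic form and a copy of the Novikov form on $(M,\partial M)$, thereby generalizing the strategy of \cite[Theorem 2.28]{BA} and Spiegel's method \cite{SP} to the twisted-bundle case handled here. I work throughout with $JX$, which is homotopy equivalent to $IX$ by Lemma \ref{lem.ipxhejpx}. Since we are on a Witt space at middle perversity with an equivariant Moore approximation of degree $k$ and with $\ob_*(\pi,k,k)=0$, Theorem \ref{JXPD} produces a Poincar\'e duality isomorphism $D^{2d}_{IX}$, and the forthcoming Proposition \ref{prop.dixsymmetric} further ensures that $\beta$ can be chosen symmetric, so that $\sigma(IX)=\sigma(\beta)$ is well defined.

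The central geometric input is the middle-degree ladder \eqref{dia.globalpd}, whose right-hand square identifies $\ctr{\zeta}{k,*}\circ \ctr{\eta}{k,*}$ with $j_*: H_{2d}(M)\to H_{2d}(M,\partial M)$ under Lefschetz duality $D_M$, and whose left-hand square identifies $K=\ker \ctr{\zeta}{k,2d} = \im \ctr{\nu}{k,2d}$ with the image of the connecting map $\delta^{\ov p, *}: H^{2d-1}(\ft{E}{k})\to \rrC{IX}{2d}$ under the isomorphism $D^{2d-1}_{k,k}$ of Proposition \ref{bundle duality}. The first step is to show $K$ is isotropic for $\beta$: the Kronecker pairing of $\delta^{\ov p, *}(b)$ against a class $\ctr{\nu}{k, *}(x)$ coming from $\rrH{\ctr{Q}{k}E}{2d}$ unwinds, via the two Poincar\'e dualities glued by the diagram, into a cup product $\ctr{C}{k}^{*}(x')\cup \ctr{C}{k}^{*}(y')$ in $H^{4d-1}(E)$ of classes pulled back from $\ctr{Q}{k}E$, and by the discussion after Proposition \ref{prop.globalprodobvan} such complementary cup products in the equal-perversity case vanish exactly when $\ob_*(\pi,k,k)=0$.

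Next, using the splitting \eqref{splitexact} of $H_{2d-1}(E)$ into truncation and cotruncation summands, together with Lemma \ref{lem.flkcgkexact} and the self-duality of fiberwise truncation-cotruncation from Proposition \ref{bundle duality}, one constructs a complementary isotropic subspace $K'\subset \rrH{IX}{2d}$ with $K\cap K'=0$ such that $K\oplus K'$ carries a hyperbolic restriction of $\beta$; $K'$ arises as a lift of $\ker\bigl(H_{2d-1}(\ft{E}{k})\to H_{2d-1}(M)\bigr)$ through the exact sequence \eqref{JPXcmv}. Then $\beta$ splits orthogonally as a hyperbolic form on $K\oplus K'$ (of signature $0$) and a nondegenerate symmetric form on $(K\oplus K')^{\perp}$; the right-hand square of \eqref{dia.globalpd} combined with Lefschetz duality $D_M$ identifies this latter form with the Novikov form $(j_*v,j_*w)\mapsto d_M(v)(j_*w)$ on $\im j_*$, yielding $\sigma(\beta)=\sigma(M,\partial M)$. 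The main obstacle is the construction and analysis of $K'$: verifying that it is isotropic, pairs hyperbolically with $K$, and has the correct rank so that $(K\oplus K')^{\perp}$ matches $\im j_*$ requires combining the duality between $\ft{E}{k}$ and $\ctr{Q}{k}E$, Lefschetz duality on $(M,\partial M)$, and the splitting \eqref{splitexact} in a rank-consistent way; this is exactly the extension of Spiegel's hyperbolicity argument to twisted link bundles. Once isotropy, complementarity, and rank-matching are in place, the identification of the residual form with the Novikov form is a formal diagram chase.
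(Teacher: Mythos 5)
Your plan, even if every step of the hyperbolic decomposition were carried out, only establishes $\sigma(IX)=\sigma(M,\partial M)$, i.e.\ Theorem \ref{HIsignature}; the quantity $\sigma_{IH}(X)$ never appears in your argument, yet it is one side of the equality you are asked to prove. The corollary needs a second, independent input: that the Goresky--MacPherson--Siegel signature of $X$ is \emph{also} supported on the top stratum. In the paper this is exactly Corollary \ref{IHsignature}: the middle-perversity Moore approximation of the link (degree $k=\lfloor\tfrac{1}{2}(\dim L+1)\rfloor$) forces $\ImH{TE}{2d}=0$ by Proposition \ref{TEhomology1} (which uses Proposition \ref{relationtoIH} and the intersection homology cone formula), hence $w_{IH}(TE)=0$ in $W(\Q)$, and Novikov additivity (\ref{equ.novadd}) over the bicollared hypersurface $\partial M\subset X$ gives $w_{IH}(X)=w_{IH}(\hat{M})$, so $\sigma_{IH}(X)=\sigma_{IH}(\hat{M})=\sigma(M,\partial M)$ by Siegel's identification of the Novikov signature \cite{SI}. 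Note that the vanishing of the local duality obstruction plays no role in this half; what is needed is the middle-degree Moore approximation. Without this bridge your proof stops at $\sigma(\beta)=\sigma(M,\partial M)$ and the claimed equality $\sigma_{IH}(X)=\sigma(IX)$ does not follow. The paper deduces the corollary in one line precisely by concatenating Theorem \ref{HIsignature} with Corollary \ref{IHsignature}.

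For the half you do address, your route is essentially the paper's own proof of Theorem \ref{HIsignature}: there $\redH_{2d}(IX)$ is decomposed as $L\oplus V\oplus L'$ with $L=\ker\zeta_{\geq k*}$ and an isotropic complement $L'$ pairing hyperbolically with $L$, while $V\cong \im j_*$ carries the Novikov form (Proposition \ref{prop.dixsymmetric}), so nothing is gained over simply citing that proposition. Your description of the complementary isotropic $K'$ is the thinnest point: the paper does not obtain $L'$ as a lift of $\ker(H_{2d-1}(\ft{E}{k})\to H_{2d-1}(M))$ through (\ref{JPXcmv}), but via the annihilation subspace $Q\subset H_{2d}(M,\partial M)$, lifts $\overline{u}_j\in H_{2d}(\partial M)$ of a basis of $\ker\zeta_{\geq k*}$ obtained from the surjectivity of $C_{\geq k*}$, and a dual-basis construction; moreover the symmetric duality $d_{IX}$ is built by hand on that basis rather than taken ready-made from Theorem \ref{JXPD}, whose isomorphism is a priori neither symmetric nor compatible with your decomposition. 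These details could be repaired or bypassed by citing Proposition \ref{prop.dixsymmetric}; the genuinely missing piece is the $\sigma_{IH}$ comparison described above.
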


The rest of this section is devoted to the proof of Theorem \ref{HIsignature}.
We build on the method of Spiegel \cite{SP}, which in turn is partially based
on the methods introduced in the proof of \cite[Theorem 2.28]{BA}.
Regarding notation, we caution that the letters $i$ and $j$ will both denote
certain inclusion maps and appear as indices. This cannot possibly lead to any confusion.

Let $\{ e_1, \ldots, e_r \}$ be any basis for $j_* H_m (M)$, where
\[ j_*: H_{m} (M) \longrightarrow H_{m} (M,\partial M) \]
is induced by the inclusion.
For every $i=1,\ldots, r,$ pick a lift $\overline{e}_i \in H_m (M),$ 
$j_* (\overline{e}_i) = e_i$.
Then $\{ \overline{e}_1,\ldots, \overline{e}_r \}$ is a linearly independent set in $H_m (M)$ and
\begin{equation} \label{equ.oekerjiszero}
\rat \langle \overline{e}_1,\ldots, \overline{e}_r \rangle \cap \ker j_* = \{ 0 \}. 
\end{equation}
Let 
\[ d_M: H_m (M) \stackrel{\cong}{\longrightarrow} H^m (M,\partial M)=H_m (M,\partial M)^\dagger \]
be the Lefschetz duality isomorphism, i.e. the inverse of 
\[ D'_M: H^m (M,\partial M) \stackrel{\cong}{\longrightarrow} H_m (M), \]
given by capping with the fundamental 
class $[M,\partial M]\in H_{2m} (M,\partial M)$.
Let 
\[ d'_M: H_m (M,\partial M) \stackrel{\cong}{\longrightarrow} H^m (M) \]
be the inverse of 
\[ D_M: H^m (M) \stackrel{\cong}{\longrightarrow} H_m (M,\partial M), \]
given by capping with the fundamental class.
We shall make frequent use of the symmetry identity 
\[ d_M (v)(w) = d'_M (w)(v), \]
$v\in H_m (M),$ $w\in H_m (M,\partial M),$ which holds since the cup product
of $m$-dimensional cohomology classes commutes as $m=2d$ is even.
The commutative diagram
\[ \xymatrix{
H_m (M) \ar[r]^{j_*} \ar[d]_{d_M} & H_m (M,\partial M) \ar[d]^{d'_M} \\
H^m (M,\partial M) \ar[r]_{j^*} & H^m (M)
} \]
implies that the symmetry equation
\[ d_M (\overline{e}_i)(e_j) = d_M (\overline{e}_j)(e_i) \]
holds, as the calculation
\begin{align*} 
d_M (\overline{e}_i)(e_j) &= d_M (\overline{e}_i)( j_* \overline{e}_j)
 = j^* d_M (\overline{e}_i)(\overline{e}_j) = d'_M (j_* \overline{e}_i)(\overline{e}_j) \\
 &= d'_M (e_i)(\overline{e}_j) = d_M (\overline{e_j})(e_i) 
\end{align*}
shows.

In the proof of \cite[Theorem 2.28]{BA}, the first author introduced the annihilation subspace
$Q\subset H_m (M,\partial M)$,
\[ Q = \{ q\in H_m (M,\partial M) ~|~ d_M (\overline{e}_i)(q)=0 \text{ for all } i \}. \]
It is shown on p. 138 of \emph{loc. cit.} that one obtains an internal direct sum decomposition
\[ H_m (M,\partial M) = \im j_* \oplus Q. \]
Let $L\subset \redH_m (IX)$ be the kernel of the map
\[ \zeta_{\geq k*}: \redH_m (IX) \longrightarrow H_m (M,\partial M). \]
Once we have completed the construction of a symmetric intersection form,
$L$ will eventually be shown to be a Lagrangian subspace of an appropriate subspace of
$\redH_m (IX)$. Let $\{ u_1, \ldots u_l \}$ be any basis for $L$.

We consider the commutative diagram
\begin{equation} \label{equ.ijetazetanuctauf}
\xymatrix{
& H_m (M,\partial M) \ar@{=}[r] & H_m (M,\partial M) & \\
H_m (\ftr_{<k} E) \ar[r]^{\tau_{<k*}} & H_m (M) \ar[u]^{j_*} \ar[r]^{\eta_{\geq k*}} &
   \redH_m (IX) \ar[u]^{\zeta_{\geq k*}}  \ar[r]^{\delta_*}  &
   H_{m-1} (\ftr_{<k} E) \\
H_m (\ftr_{<k} E) \ar@{=}[u] \ar@{^{(}->}[r]^{F_{<k*}} &
   H_m (\partial M) \ar[u]^{i_*} \ar@{->>}[r]^{C_{\geq k*}} &
  \redH_m (Q_{\geq k} E) \ar[u]^{\nu_{\geq k*}} \ar[r]^{\delta_* =0} &
  H_{m-1} (\ftr_{<k} E) \ar@{=}[u] 
} \end{equation}
The rows and columns are exact and we have used Lemma \ref{lem.flkcgkexact}.
By exactness of the right hand column, the basis elements $u_j$ can be lifted to
$\redH_m (Q_{\geq k} E)$, and by the surjectivity of $C_{\geq k*},$ these lifts
can be further lifted to $H_m (\partial M)$. In this way, we obtain linearly independent
elements $\overline{u}_1,\ldots, \overline{u}_l$ in $H_m (\partial M)$ such that
\[ \eta_{\geq k*} i_* (\overline{u}_j) = \nu_{\geq k*} C_{\geq k*} (\overline{u}_j) = u_j \]
for all $j$.
Setting
\[ w^j = d_M (i_* (\overline{u}_j)) \]
yields a linearly independent set $\{ w^1,\ldots, w^l \} \subset H^m (M,\partial M).$
From now on, let us briefly write $\eta_*, \zeta_*,$ etc., for $\eta_{\geq k*}, \zeta_{\geq k*},$ etc.
Since $\eta_* i_* (\overline{u}_j)=u_j,$ we have
\[ \rat \langle i_* (\overline{u}_1),\ldots, i_* (\overline{u}_l) \rangle \cap \ker \eta_* = \{ 0 \}. \]
Together with (\ref{equ.oekerjiszero}), and noting $\ker \eta_* \subset \ker j_*,$
this shows that there exists a linear subspace
$A\subset H_m (M)$ yielding an internal direct sum
decomposition
\begin{equation} \label{equ.hommdecomp}
H_m (M) = \rat \langle i_* (\overline{u}_1),\ldots, i_* (\overline{u}_l) \rangle \oplus
  \ker \eta_* \oplus \rat \langle \overline{e}_1, \ldots, \overline{e}_r \rangle \oplus A. 
\end{equation}
Setting
\[ Z =  \ker \eta_* \oplus \rat \langle \overline{e}_1, \ldots, \overline{e}_r \rangle \oplus A, \]
we have
\[ H_m (M) = \rat \langle i_* (\overline{u}_1),\ldots, i_* (\overline{u}_l) \rangle \oplus Z, \]
such that 
\begin{equation} \label{equ.keretaoveinz}
\ker \eta_* \subset Z \text{ and } \rat \langle \overline{e}_1, \ldots, \overline{e}_r \rangle \subset Z. 
\end{equation}
Choose a basis $\{ \widetilde{z}_1, \ldots, \widetilde{z}_s \}$ of $Z$ and put
$z^j = d_M (\widetilde{z}_j) \in H^m (M,\partial M)$.
Then $\{ z^1,\ldots z^s \}$ is a basis for $d_M (Z)$ and
\[ H^m (M,\partial M) = \rat \langle w^1, \ldots, w^l \rangle \oplus 
     \rat \langle z^1, \ldots, z^s \rangle. \]
 Let 
\[ \{ w_1,\ldots, w_l, z_1,\ldots, z_s \} \subset H_m (M,\partial M) \]
be the dual basis of $\{ w^1, \ldots, w^l, z^1, \ldots, z^s \}$, that is,
\begin{equation} \label{equ.dualwz}
w^i (w_j)=\delta_{ij},~ z^i (z_j)=\delta_{ij},~ w^i (z_j)=0,~ z^i (w_j)=0. 
\end{equation}

\begin{lemma} \label{lem.winimzeta}
The set $\{ w_1,\ldots, w_l \}$ is contained in the image of $\zeta_*$.
\end{lemma}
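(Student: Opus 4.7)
The plan is to use the Poincar\'e duality square from Theorem \ref{JXPD} to convert the assertion into a cohomological statement, and then apply the long exact sequence \eqref{JPXcmv} together with the dual basis relations.

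Specifically, specializing the diagram \eqref{dia.globalpd} to degree $r=m$ in the Witt setting (where $\ov p = \ov q$ and $k=l$) yields a commutative square
\[ \xymatrix@C=40pt{
\redH^m(IX) \ar[r]^{\eta_{\geq k}^*} \ar[d]_{D_{IX}}^{\cong} & H^m(M) \ar[d]_{D_M}^{\cong} \\
\redH_m(IX) \ar[r]^{\zeta_{\geq k*}} & H_m(M, \partial M).
}  \]
Since $D_{IX}$ is an isomorphism, $\im \zeta_{\geq k*} = D_M(\im \eta_{\geq k}^*)$, so the claim $w_j \in \im \zeta_{\geq k*}$ is equivalent to $d'_M(w_j) \in \im \eta_{\geq k}^*$. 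Exactness of \eqref{JPXcmv} at $H^m(M)$ gives $\im \eta_{\geq k}^* = \ker \tau_{<k}^*$, and under the universal coefficient identification $H^m(M) \cong H_m(M)^\dagger$ this is precisely the annihilator of $\im \tau_{<k*} = \ker \eta_{\geq k*}$.

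It therefore remains to check that $d'_M(w_j)$ vanishes on $\ker \eta_{\geq k*}$. Using the symmetry identity $d'_M(w_j)(v) = d_M(v)(w_j)$ for $v \in H_m(M)$, this reduces to verifying $d_M(v)(w_j)=0$ for every $v \in \ker \eta_{\geq k*}$. By the very construction of $Z$ in \eqref{equ.hommdecomp}, one has the inclusion $\ker \eta_{\geq k*} \subseteq Z$ recorded in \eqref{equ.keretaoveinz}, so any such $v$ expands as $v = \sum_i \alpha_i \widetilde{z}_i$; hence $d_M(v) = \sum_i \alpha_i z^i$, and the dual basis relation $z^i(w_j) = 0$ from \eqref{equ.dualwz} delivers the vanishing. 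The main conceptual point is not any single computation but the observation that $\ker \eta_{\geq k*}$ was deliberately placed inside the complement $Z$ of $\rat\langle i_*(\overline{u}_1),\ldots, i_*(\overline{u}_l)\rangle$ precisely in order to make this dual basis argument go through.
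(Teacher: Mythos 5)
Your proof is correct, and its core computation is exactly the one the paper uses: the decisive point in both arguments is that $\ker \eta_{\geq k*} = \im \tau_{<k*}$ lies inside $Z$ by \eqref{equ.keretaoveinz}, so the dual-basis relations \eqref{equ.dualwz} force $d_M(v)(w_j)=0$ for all $v$ in that kernel. Where you differ is in the bookkeeping that turns this vanishing into the statement $w_j \in \im \zeta_{\geq k*}$: the paper stays in homology, using exactness of \eqref{JQXhmv} at $H_m(M,\partial M)$ and the identification $\delta_* = D_{k,k}\circ \tau_{<k}^* \circ d'_M$ (the right-hand solid square of \eqref{dia.globalpd}), so it only needs the bundle-level duality $D_{k,k}$ and shows directly that $\delta_*(w_j)=0$; you instead pass through the global duality isomorphism $D_{IX}$ of Theorem \ref{JXPD} to get $\im \zeta_{\geq k*} = D_M(\im \eta_{\geq k}^*)$, and then work in cohomology with \eqref{JPXcmv} and the universal coefficient identification. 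Your route is legitimate here because the standing hypotheses of this section (existence of the degree-$k$ Moore approximation and vanishing of $\ob_*(\pi,k,k)$) guarantee Theorem \ref{JXPD}, and there is no circularity since only the existence of some isomorphism filling the square is used --- indeed the paper itself invokes precisely this square, as \eqref{equ.etazetaduality}, in the proof of the very next lemma. The trade-off is that your argument leans on the heavier five-lemma-produced global isomorphism where the paper's version needs only the boundary-map identification, but both are sound and yield the same reduction.
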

\begin{proof}
In view of the commutative diagram
\[ \xymatrix{
\redH_m (IX) \ar[r]^{\zeta_*} & H_m (M,\partial M) \ar[r]^{\delta_*} \ar[d]_{d'_M}^\cong &
  \redH_{m-1} (Q_{\geq k} E) \\
& H^m (M) \ar[r]^{\tau^*} & H^m (\ftr_{<k} E), \ar[u]_{D_{k,k}}^\cong
} \]
it suffices to show that $\delta_* (w_j)=0$, since the top row is exact.
Let $x\in H_m (\ftr_{<k} E)$ be any element. Then $\tau_* x \in \ker \eta_* \subset Z$,
so $d_M (\tau_* x)(w_j)=0$ by (\ref{equ.dualwz}). Consequently,
\[ (\tau^* d'_M (w_j))(x) = d'_M (w_j)(\tau_* x)=d_M (\tau_* x)(w_j)=0. \]
It follows that $\tau^* d'_M (w_j)=0$ and in particular
\[ \delta_* (w_j) = D_{k,k} \tau^* d'_M (w_j)=0. \]
\end{proof}
Suppose that $v\in \ker \zeta_* \cap \eta_* \langle \overline{e}_1,\ldots, \overline{e}_r \rangle$.
Then $v$ is a linear combination $v=\eta_* \sum \lambda_i \overline{e}_i$ and
\[ 0= \zeta_* (v) = \zeta_* \eta_* \sum \lambda_i \overline{e}_i
 = \sum \lambda_i j_* (\overline{e}_i) = \sum \lambda_i e_i. \]
Thus $\lambda_i =0$ for all $i$ by the linear independence of the $e_i.$ This shows that 
\[ L \cap \eta_* \langle \overline{e}_1,\ldots, \overline{e}_r \rangle = \{ 0 \}.  \]
Therefore, it is possible to choose a direct sum complement $W\subset \redH_m (IX)$ of $L=\ker \zeta_*,$
\begin{equation} \label{equ.decomphixlw} 
\redH_m (IX) = L\oplus W, 
\end{equation}
such that
\begin{equation} \label{equ.etaoveiinw}
 \eta_* \langle \overline{e}_1,\ldots, \overline{e}_r \rangle \subset W.
\end{equation}
The restriction
\[ \zeta_*|_W: W \longrightarrow \im \zeta_* \]
is then an isomorphism and thus by Lemma \ref{lem.winimzeta}, we may define
\[ \overline{w}_j = (\zeta_*|_W)^{-1} (w_j). \]
We define subspaces $V, L' \subset W$ by
\[ V=  (\zeta_*|_W)^{-1} (\im j_*),~   L' = (\zeta_*|_W)^{-1} (Q\cap \im \zeta_*). \]
Recall that $\{ e_1, \ldots, e_r \}$ is a basis of $\im j_*$. Setting
\[ v_j = (\zeta_*|_W)^{-1} (e_j), \]
yields a basis $\{ v_1, \ldots, v_r \}$ for $V$.
From
\[ \zeta_* (v_i) = e_i = j_* (\overline{e}_i) = \zeta_* \eta_* (\overline{e}_i) \]
it follows that
\[ v_i = \eta_* (\overline{e}_i), \]
since both $v_i$ and $\eta_* (\overline{e}_i)$ are in $W$ and $\zeta_*$ is injective on $W$.
The decomposition $H_m (M,\partial M)=\im j_* \oplus Q$ induces a decomposition
\[ \im \zeta_* = (\im j_* \oplus Q)\cap \im \zeta_* =
   \im j_* \oplus (Q\cap \im \zeta_*). \]
Applying the isomorphism $(\zeta_*|_W)^{-1}$, we receive a decomposition
\[ W = (\zeta_*|_W)^{-1}(\im j_*) \oplus (\zeta_*|_W)^{-1}(Q\cap \im \zeta_*)
       =  V \oplus L'. \]
By (\ref{equ.decomphixlw}), we arrive at a decomposition
\[ \redH_m (IX) = L \oplus V \oplus L'. \]
\begin{lemma}
The set $\{ \overline{w}_1,\ldots, \overline{w}_l \} \subset W$ is contained in $L'$.
\end{lemma}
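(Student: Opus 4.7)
The plan is to unwind the definitions of $\overline{w}_j$ and $L'$ and reduce the claim to a pairing computation that falls out of the dual basis relations~(\ref{equ.dualwz}).

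First, by the very definitions $\overline{w}_j = (\zeta_*|_W)^{-1}(w_j)$ and $L' = (\zeta_*|_W)^{-1}(Q \cap \im \zeta_*)$, so the statement $\overline{w}_j \in L'$ is equivalent to $w_j \in Q \cap \im \zeta_*$. The second factor is handled immediately by Lemma~\ref{lem.winimzeta}, so the entire content of the lemma is to verify that $w_j$ lies in the annihilation subspace $Q$, i.e., that $d_M(\overline{e}_i)(w_j) = 0$ for every $i = 1, \ldots, r$.

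To see this, I would use the inclusion $\rat\langle \overline{e}_1, \ldots, \overline{e}_r \rangle \subset Z$ from~(\ref{equ.keretaoveinz}). Applying the Lefschetz duality isomorphism $d_M$, we obtain $d_M(\overline{e}_i) \in d_M(Z) = \rat\langle z^1, \ldots, z^s \rangle$ for each $i$. Writing $d_M(\overline{e}_i) = \sum_k \lambda_k^i z^k$ and evaluating at $w_j$ gives
\[ d_M(\overline{e}_i)(w_j) = \sum_k \lambda_k^i \, z^k(w_j) = 0, \]
where the last equality is one of the dual basis relations $z^k(w_j) = 0$ in~(\ref{equ.dualwz}). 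This forces $w_j \in Q$ and completes the argument.

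There is no real obstacle here; the lemma is essentially a bookkeeping statement that makes the prior careful choice of basis pay off. The only point requiring attention is confirming that $\overline{e}_i$ lands inside $Z$ (which was arranged by the direct sum decomposition~(\ref{equ.hommdecomp})) so that $d_M(\overline{e}_i)$ lies in the span of the $z^k$'s rather than having a nontrivial component along the $w^k$'s.
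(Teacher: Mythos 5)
Your proposal is correct and follows essentially the same route as the paper: reduce membership in $L'$ to showing $w_j = \zeta_*(\overline{w}_j) \in Q$, then observe that $d_M(\overline{e}_i) \in d_M(Z) = \rat\langle z^1,\ldots,z^s\rangle$ by (\ref{equ.keretaoveinz}) and conclude via the dual basis relations (\ref{equ.dualwz}). The only cosmetic difference is that you separately invoke Lemma \ref{lem.winimzeta} for $w_j \in \im\zeta_*$, which the paper leaves implicit since the definition of $\overline{w}_j$ already presupposes it.
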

\begin{proof}
By construction of $L'$, we have to show that $\zeta_* (\overline{w}_j)\in Q$ for all $j$.
Now $\zeta_* (\overline{w}_j)=w_j$, so by construction of $Q$, we need to demonstrate
that $d_M (\overline{e}_i)(w_j)=0$ for all $i$. By (\ref{equ.keretaoveinz}), 
$d_M (\overline{e}_i) \in d_M (Z),$ whence the result follows from (\ref{equ.dualwz}).
\end{proof}

\begin{lemma}
The set $\{ \overline{w}_1,\ldots, \overline{w}_l \} \subset W$ is a basis for $L'$.
\end{lemma}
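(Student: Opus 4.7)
The plan is to show $\dim L' = l$; combined with the linear independence of $\{\overline{w}_1, \ldots, \overline{w}_l\} \subset L'$ (immediate from $\zeta_*(\overline{w}_j) = w_j$ and the independence of the $w_j$ in $H_m(M,\partial M)$), this will imply that these elements form a basis of $L'$. As a first reduction, since $\zeta_*|_W: W \to \im \zeta_*$ is an isomorphism (because $L = \ker \zeta_*$ is complementary to $W$) and $\zeta_*$ sends $L'$ onto $Q \cap \im \zeta_*$ by construction, $\dim L' = \dim(Q \cap \im \zeta_*)$. From $j_* = \zeta_* \circ \eta_*$ one has $\im j_* \subseteq \im \zeta_*$, and the decomposition $H_m(M, \partial M) = \im j_* \oplus Q$ restricts to $\im \zeta_* = \im j_* \oplus (Q \cap \im \zeta_*)$, whence $\dim L' = \dim \im \zeta_* - r$.

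Next I would compute $\dim \im \zeta_*$ via Poincaré duality on $IX$. The commutative square from diagram (\ref{dia.globalpd}), namely $\zeta_* \circ D_{IX} = D_M \circ \eta^*$, combined with the fact that $D_{IX}$ and $D_M$ are isomorphisms, yields $\dim \im \zeta_* = \dim \im \eta^*$. The cohomology long exact sequence associated to the cofibration $\ft{E}{k} \to M \to IX$ identifies $\im \eta^* = \ker \tau^*$, and over $\rat$ one has $\dim \im \tau^* = \dim \im \tau_*$; by exactness of (\ref{JPXcmv}), $\im \tau_* = \ker \eta_*$. Hence $\dim \im \zeta_* = \dim H_m(M) - \dim \ker \eta_*$.

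To finish, I would compute $\dim \ker \eta_*$. The bottom-left commutative square of (\ref{equ.ijetazetanuctauf}), the surjectivity of $C_{\geq k*}$ from Lemma \ref{lem.flkcgkexact}, and exactness of (\ref{JQXhmv}) together give $L = \im \nu_* = \im(\eta_* \circ i_*) = \eta_*(\ker j_*)$, the last equality because $\im i_* = \ker j_*$ in the long exact sequence of the pair $(M, \partial M)$. Applying rank-nullity to $\eta_*|_{\ker j_*}$, whose kernel is $\ker \eta_*$ since $\ker \eta_* \subseteq \ker j_*$, yields $\dim \ker j_* = \dim \ker \eta_* + l$; combined with $\dim \ker j_* = \dim H_m(M) - r$, this gives $\dim \ker \eta_* = \dim H_m(M) - l - r$, whence $\dim \im \zeta_* = l + r$ and $\dim L' = l$. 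The main obstacle is the careful interplay of the two long exact sequences (\ref{JPXcmv}) and (\ref{JQXhmv}) with the Poincaré--Lefschetz duality square from (\ref{dia.globalpd}); no new ingredient beyond the machinery already in place is required.
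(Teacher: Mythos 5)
Your proof is correct, and while it rests on the same two pillars as the paper's argument -- a dimension count in $\redH_m(IX)=L\oplus V\oplus L'$ and the duality square $\zeta_*\circ D_{IX}=D_M\circ\eta^*$ supplied by Theorem \ref{JXPD} (diagram (\ref{dia.globalpd})), which gives $\rk\zeta_*=\rk\eta^*=\rk\eta_*$ -- the way you pin down the ranks is genuinely different. The paper only needs an inequality: it invokes the general estimate $\rk(\zeta_*\eta_*)\geq\rk\eta_*-\dim\ker\zeta_*$ for the composition $j_*=\zeta_*\eta_*$ to get $\rk\zeta_*\leq l+\rk j_*$, compares the two decompositions $\redH_m(IX)=L\oplus W$ and $L\oplus V\oplus L'$ to conclude $\dim L'\leq l$, and combines this with the lower bound coming from the independent set $\{\overline{w}_1,\ldots,\overline{w}_l\}\subset L'$. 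You instead compute everything exactly: from the exact right-hand column of diagram (\ref{equ.ijetazetanuctauf}), the surjectivity of $C_{\geq k*}$ (Lemma \ref{lem.flkcgkexact}) and the commutativity of the lower middle square you identify $L=\im\nu_*=\eta_*(\ker j_*)$, whence $\dim\ker j_*=\dim\ker\eta_*+l$ by rank--nullity (using $\ker\eta_*\subset\ker j_*$), and then $\rk\zeta_*=\rk\eta^*=\dim\ker\tau^*$ together with $\im\tau_*=\ker\eta_*$ gives $\rk\zeta_*=l+r$ and $\dim L'=\dim(Q\cap\im\zeta_*)=l$ on the nose. Your route is slightly longer but yields the sharper statement $\rk\zeta_*=l+r$ (equivalently $\dim\ker j_*=\dim\ker\eta_*+l$) rather than a one-sided bound, and it makes visible where the discrepancy between $\ker j_*$ and $\ker\eta_*$ goes, namely onto $L$. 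One small citation slip: the homology fact $\im\tau_*=\ker\eta_*$ does not follow from (\ref{JPXcmv}), which is the cohomology sequence; it is the exactness of the middle row of diagram (\ref{equ.ijetazetanuctauf}) (the homology cofibration sequence of $\ft{E}{k}\to M\to I^{\ov{p}}X$). This is a labeling issue only and does not affect the argument.
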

\begin{proof}
The preimages
$\overline{w}_j = (\zeta_*|_W)^{-1} (w_j)$ under the isomorphism $\zeta_*|_W$ are linearly
independent since $\{ w_1, \ldots, w_l \}$ is a linearly independent set. In particular,
$\dim L' \geq l$. It remains to be shown that $\dim L' \leq l$.
Standard linear algebra provides the inequality
\[ \rk \eta_* \leq \dim \ker \zeta_* + \rk (\zeta_* \eta_*), \]
valid for the composition of any two linear maps.
As $\zeta_* \eta_* = j_*,$ we may rewrite this as
\begin{equation} \label{equ.rketaleqlrkj}
 \rk \eta_* \leq l + \rk j_*. 
\end{equation}
By Theorem \ref{JXPD}, there exists some isomorphism
$\redH^m (IX) \to \redH_m (IX)$ such that
\begin{equation} \label{equ.etazetaduality}
\xymatrix{
\redH^m (IX) \ar[r]^{\eta^*} \ar[d]_{\cong} & H^m (M) \ar[d]_{\cong}^{D_M} \\
\redH_m (IX) \ar[r]^{\zeta_*} & H_m (M,\partial M)
} \end{equation}
commutes. Therefore,
\[ \rk \zeta_* = \rk \eta^* = \rk \eta_*, \]
and by (\ref{equ.rketaleqlrkj}),
\[ \rk \zeta_* \leq l + \rk j_*. \]
The decomposition (\ref{equ.decomphixlw}) implies that
\[ \dim \redH_m (IX) = l + \dim W = l + \rk \zeta_* \leq 2l + \rk j_*.  \]
On the other hand, the decomposition $\redH_m (IX)=L\oplus V\oplus L'$ implies
\[ \dim \redH_m (IX) = l + \dim V + \dim L' = l + \rk j_* + \dim L'. \]
It follows that
\[ l + \rk j_* + \dim L' \leq 2l + \rk j_* \]
and thus
\[ \dim L' \leq l. \]
\end{proof}
In summary then, we have constructed a certain basis
\begin{equation} \label{equ.basishmix}  
\{ u_1, \ldots, u_l, v_1, \ldots, v_r, \overline{w}_1,\ldots ,\overline{w}_l \} 
\end{equation}
for $\redH_m (IX) = L\oplus V\oplus L'$.

\begin{remark}
The above proof shows that $\rk \eta_* \leq l+\rk j_* = l+r.$ Thus the restriction of $\eta_*$
to the subspace $A\subset H_m (M)$ in the decomposition (\ref{equ.hommdecomp})
is zero, which implies that $A\subset \ker \eta_*$ and so $A=\{ 0 \}$. The decomposition of $H_m (M)$
is thus seen to be
\begin{equation} \label{equ.hmdecomp}
H_m (M) = \rat \langle i_* (\overline{u}_1),\ldots, i_* (\overline{u}_l) \rangle \oplus
  \ker \eta_* \oplus \rat \langle \overline{e}_1, \ldots, \overline{e}_r \rangle. 
\end{equation}
In particular,
\[ Z =  \ker \eta_* \oplus \rat \langle \overline{e}_1, \ldots, \overline{e}_r \rangle. \]
\end{remark}

Let
\[  \{ u^1, \ldots, u^l, v^1, \ldots, v^r, \overline{w}^1,\ldots ,\overline{w}^l \} \]
be the dual basis for $\redH^m (IX)$. Setting
\[ L^\dagger = \rat \langle u^1, \ldots, u^l \rangle,~
  V^\dagger = \rat \langle v^1, \ldots, v^r \rangle,~
  (L')^\dagger = \rat \langle \overline{w}^1, \ldots, \overline{w}^l \rangle, \]
we get a dual decomposition
\[ \redH^m (IX) = L^\dagger \oplus V^\dagger \oplus (L')^\dagger. \]
We define the duality map
\[ d_{IX}: \redH_m (IX) \longrightarrow \redH^m (IX) \]
on basis elements to be
\begin{align*}
d_{IX} (u_j) &:= \overline{w}^j, \\
d_{IX} (\overline{w}_j) &:= u^j, \\
d_{IX} (v_j) &:= \zeta^* d_M (\overline{e}_j).
\end{align*}
We shall now prove that $d_{IX}$ is an isomorphism.
\begin{lemma}
The image $d_{IX} (V)$ is contained in $V^\dagger$.
\end{lemma}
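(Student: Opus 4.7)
The plan is to use the defining basis (\ref{equ.basishmix}) of $\redH_m(IX) = L\oplus V\oplus L'$ and its dual basis, and to show directly that the image $d_{IX}(v_j) = \zeta^* d_M(\overline{e}_j)$ pairs to zero against every $u_i$ and every $\overline{w}_i$. By definition of the dual basis, this forces $d_{IX}(v_j)$ to lie in the span of $v^1,\ldots, v^r$, which is $V^\dagger$, and the lemma will follow by linearity.

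First I would handle the $u_i$ pairing. For any $u_i \in L = \ker \zeta_*$, I would simply compute
\[ (\zeta^* d_M(\overline{e}_j))(u_i) = d_M(\overline{e}_j)(\zeta_* u_i) = d_M(\overline{e}_j)(0) = 0, \]
using only the definitions of $\zeta^*$ and $L$.

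The slightly more substantive step will be the $\overline{w}_i$ pairing. Here I would write
\[ (\zeta^* d_M(\overline{e}_j))(\overline{w}_i) = d_M(\overline{e}_j)(\zeta_* \overline{w}_i) = d_M(\overline{e}_j)(w_i), \]
by the defining property $\zeta_*(\overline{w}_i) = w_i$. Now I would invoke the containment (\ref{equ.keretaoveinz}), which gives $\overline{e}_j \in Z$, and therefore $d_M(\overline{e}_j) \in d_M(Z) = \rat\langle z^1,\ldots, z^s\rangle$. Since $\{w_1,\ldots,w_l, z_1,\ldots,z_s\}$ was constructed in (\ref{equ.dualwz}) as the dual basis to $\{w^1,\ldots,w^l, z^1,\ldots,z^s\}$, every $z^a$ vanishes on $w_i$, so $d_M(\overline{e}_j)(w_i) = 0$.

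Combining the two vanishings, $\zeta^* d_M(\overline{e}_j)$ annihilates $L\oplus L'$ and therefore belongs to $V^\dagger$, as required. The only potentially tricky point is making sure one uses the correct inclusion $\rat\langle \overline{e}_1,\ldots,\overline{e}_r\rangle \subset Z$ from (\ref{equ.keretaoveinz}); once that is in hand, the rest is an immediate bookkeeping argument on dual bases.
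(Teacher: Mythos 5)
Your proposal is correct and follows essentially the same route as the paper: expand $d_{IX}(v_j)=\zeta^* d_M(\overline{e}_j)$ in the dual basis, kill the $u^p$-coefficients via $u_p\in\ker\zeta_*$, and kill the $\overline{w}^i$-coefficients via $\zeta_*\overline{w}_i=w_i$, $d_M(\overline{e}_j)\in d_M(Z)$ and the dual-basis relations (\ref{equ.dualwz}). Nothing is missing.
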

\begin{proof}
In terms of the dual basis, $d_{IX} (v_j)$ can be expressed as a linear combination
\[ d_{IX} (v_j) = \sum_p \pi_p u^p + \sum_q \epsilon_q v^q + \sum_i \lambda_i \overline{w}^i. \]
The coefficients $\pi_p$ are 
\[ \pi_p = \left( \zeta^* d_M (\overline{e}_j) \right) (u_p) 
  = d_M (\overline{e}_j)(\zeta_* u_p)=0, \]
since $u_p \in L = \ker \zeta_*$.
Using (\ref{equ.dualwz}) and $d_M (\overline{e}_j)\in d_M (Z) = \rat \langle z^1,\ldots, z^s \rangle,$
we find
\[ \lambda_i = \left( \zeta^* d_M (\overline{e}_j) \right) (\overline{w}_i) 
  = d_M (\overline{e}_j)(w_i)=0. \]
\end{proof}

\begin{lemma} \label{lem.dixvinj}
The restriction $d_{IX}|:V \to V^\dagger$ is injective.
\end{lemma}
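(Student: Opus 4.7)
The plan is to suppose that $x = \sum_{j=1}^r \lambda_j v_j \in V$ lies in the kernel of $d_{IX}|$, and to deduce $\lambda_j = 0$ for all $j$. By the defining formula $d_{IX}(v_j) = \zeta^* d_M(\overline{e}_j)$, the hypothesis reads $\sum_j \lambda_j \zeta^* d_M(\overline{e}_j) = 0$ in $\redH^m(IX)$. The natural thing to do is to evaluate this vanishing functional on each basis vector $v_i \in V$. Because $v_i = \eta_*(\overline{e}_i)$ and $\zeta_* \circ \eta_* = j_*$ (which is visible in diagram (\ref{equ.ijetazetanuctauf})), one has $\zeta_* v_i = j_*(\overline{e}_i) = e_i$, so the evaluation produces the linear system
\[ 0 \;=\; \sum_{j=1}^r \lambda_j \, d_M(\overline{e}_j)(e_i) \qquad \text{for every } i = 1, \ldots, r. \]

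Invoking the symmetry $d_M(\overline{e}_j)(e_i) = d_M(\overline{e}_i)(e_j)$ already established just before (\ref{equ.hommdecomp}), the matrix $B_{ji} := d_M(\overline{e}_j)(e_i)$ is recognized as the Gram matrix, in the basis $\{e_1, \ldots, e_r\}$, of the Novikov intersection form $\beta$ of $(M, \partial M)$ restricted to $\im j_*$. To conclude $\lambda_j = 0$, I would invoke the classical fact that $\beta$ is nondegenerate on $\im j_*$: if $j_* w \in \im j_*$ satisfies $\beta(j_*v, j_*w) = d_M(v)(j_*w) = 0$ for every $v \in H_m(M)$, then $j_*w$ is annihilated by the entire image of the Lefschetz isomorphism $d_M \colon H_m(M) \xrightarrow{\cong} H^m(M, \partial M) = H_m(M, \partial M)^\dagger$, which separates points, forcing $j_*w = 0$. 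Hence $B$ is invertible and the linear system forces each $\lambda_j$ to vanish.

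No genuine obstacle is anticipated: the entire argument is a short unwinding of the definition of $d_{IX}$ on $V$ combined with the standard nondegeneracy of the Novikov form on $\im j_*$. The only point requiring care is the correct identification $\zeta_* v_i = e_i$, which follows at once from $v_j = \eta_*(\overline{e}_j)$ and the commutativity $\zeta_* \eta_* = j_*$ in (\ref{equ.ijetazetanuctauf}).
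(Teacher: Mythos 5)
Your argument is correct, but it travels a slightly different road than the paper's proof. You pair the hypothetical kernel element only against the basis of $V$ itself: using $\zeta_* v_i = e_i$ you reduce to the linear system $\sum_j \lambda_j\, d_M(\overline{e}_j)(e_i)=0$, recognize the coefficient matrix as the Gram matrix of the Novikov form $\beta$ on $\im j_*$ in the basis $\{e_i\}$, and then invoke (and correctly justify from Lefschetz duality, since $d_M$ surjects onto $H_m(M,\partial M)^\dagger$) the nondegeneracy of $\beta$ on $\im j_*$; the symmetry $d_M(\overline{e}_j)(e_i)=d_M(\overline{e}_i)(e_j)$, already established in the text, lets you flip slots. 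The paper instead applies $\eta^*$ to the equation $d_{IX}(v)=0$ and uses $\eta^*\zeta^*=j^*$ together with $j^* d_M = d'_M j_*$ to obtain $d'_M\bigl(\sum_q \epsilon_q e_q\bigr)=0$, whence $\sum \epsilon_q e_q=0$ since $d'_M$ is an isomorphism, and the $\epsilon_q$ vanish by linear independence of the $e_q$; this avoids any appeal to the restricted form's nondegeneracy. What your route buys is a bit more: you are in effect proving that the block $S$ of the intersection form on $V$ (the classical Novikov form on $\im j_*$, which reappears in Proposition \ref{prop.dixsymmetric}) is itself nondegenerate, whereas the paper's chain of identities is shorter and uses only the full Lefschetz isomorphism plus linear independence. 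Both arguments ultimately rest on the same duality, and your identification $\zeta_* v_i = j_*\overline{e}_i = e_i$ is handled correctly.
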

\begin{proof}
Suppose that $v=\sum_q \epsilon_q v_q$ is any vector $v\in V$ with $d_{IX} (v)=0$.
Then
\begin{align*}
0 = \eta^* d_{IX} (v) 
&= \eta^* \sum \epsilon_q d_{IX} (v_q) = \eta^* \sum \epsilon_q \zeta^* d_M (\overline{e}_q) \\
&= j^* d_M \sum \epsilon_q \overline{e}_q = d'_M \sum \epsilon_q j_* (\overline{e}_q) \\
&= d'_M \sum \epsilon_q e_q.
\end{align*}
Since $d'_M$ is an isomorphism, $\sum \epsilon_q e_q =0$ and by the linear independence of
the $e_q$, the coefficients $\epsilon_q$ all vanish. This shows that $v=0$.
\end{proof}
By definition, $d_{IX}$ maps $L$ isomorphically onto $(L')^\dagger$ and $L'$ isomorphically
onto $L^\dagger$. Since by Lemma \ref{lem.dixvinj}, $d_{IX}|:V\to V^\dagger$ is an isomorphism,
we conclude that the duality map $d_{IX}: \redH_m (IX)\to \redH^m (IX)$ is an isomorphism.

\begin{proposition} \label{prop.dixsymmetric}
The intersection form $\beta: \redH_m (IX)\times \redH_m (IX)\to \rat$ given by
$\beta (v,w) = d_{IX} (v)(w)$ is symmetric. In fact it is given in terms of the basis
(\ref{equ.basishmix}) by the matrix
\[ \begin{pmatrix}
0 & 0 & I \\
0 & S & 0 \\
I & 0 & 0
\end{pmatrix}, \]
where $I$ is the $l\times l$-identity matrix and
$S$ is a symmetric $r\times r$-matrix, representing the classical intersection form
on $\im j_*$ whose signature is the Novikov signature $\sigma (M,\partial M)$.
\end{proposition}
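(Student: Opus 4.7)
The plan is to verify the stated matrix form of $\beta$ by computing each of the nine blocks directly from the definition $\beta(x,y)=d_{IX}(x)(y)$, the formulas defining $d_{IX}$ on the three summands of $\redH_m(IX)=L\oplus V\oplus L'$, and the dual basis relations in $\redH^m(IX)$. Symmetry will then be read off from the resulting matrix, and the identification of the middle block with the Novikov form on $\im j_*$ will give the signature claim. The work is essentially linear algebra bookkeeping, so I expect no serious obstacle once each block is computed.

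First I would treat the six blocks with no $V$-factor on either input. Since $d_{IX}(u_i)=\overline{w}^i$ and $d_{IX}(\overline{w}_i)=u^i$, each of these blocks is an evaluation of a dual basis vector, so by the defining pairings of the dual basis we immediately get $\beta(u_i,\overline{w}_j)=\delta_{ij}=\beta(\overline{w}_i,u_j)$ while $\beta(u_i,u_j)$, $\beta(\overline{w}_i,\overline{w}_j)$, $\beta(u_i,v_j)$, and $\beta(\overline{w}_i,v_j)$ all vanish. This produces the two identity blocks in the anti-diagonal corners and kills the $L$-$L$, $L'$-$L'$, $L$-$V$, and $L'$-$V$ blocks.

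Next I would handle the blocks with $v_i$ as first input, using $d_{IX}(v_i)=\zeta^*d_M(\overline{e}_i)$. The block $\beta(v_i,u_j)$ equals $d_M(\overline{e}_i)(\zeta_*u_j)$, which vanishes because $u_j\in L=\ker\zeta_*$. The block $\beta(v_i,\overline{w}_j)=d_M(\overline{e}_i)(w_j)$ vanishes by the dual basis relations \eqref{equ.dualwz} together with the fact that $d_M(\overline{e}_i)\in d_M(Z)=\rat\langle z^1,\ldots,z^s\rangle$. This kills the remaining off-diagonal blocks and, combined with the vanishing of $\beta(u_j,v_i)$ and $\beta(\overline{w}_j,v_i)$ established in the previous step, confirms that the only nontrivial entries outside the two identity blocks lie in the central $V$-$V$ block $S$.

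Finally, for the central block I would compute
\[ S_{ij}=\beta(v_i,v_j)=d_M(\overline{e}_i)(\zeta_*v_j)=d_M(\overline{e}_i)(e_j)=d_M(\overline{e}_i)(j_*\overline{e}_j), \]
then apply the commutative square $j^*d_M=d'_M j_*$ and the symmetry $d_M(v)(w)=d'_M(w)(v)$ recorded earlier in the section to conclude $S_{ij}=d_M(\overline{e}_j)(e_i)=S_{ji}$. By inspection of the definition of $\sigma(M,\partial M)$, the matrix $S$ is precisely the Gram matrix of the Novikov intersection form on $\im j_*$ with respect to the basis $\{e_1,\ldots,e_r\}$, so $\sigma(S)=\sigma(M,\partial M)$. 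The overall matrix of $\beta$ is then manifestly symmetric, being the orthogonal sum of a hyperbolic form of signature zero (from the two identity blocks between $L$ and $L'$) with $S$; since signature is additive over such sums, this simultaneously proves the proposition and, combined with Corollary \ref{IHsignature}, will yield Theorem \ref{HIsignature}. The only mild subtlety is to confirm that nothing from the $L'$-side of $V$ sneaks into the $S$-block, but this is exactly what \eqref{equ.dualwz} and $d_M(\overline{e}_i)\in d_M(Z)$ ensure.
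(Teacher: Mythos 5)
Your proposal is correct and follows essentially the same route as the paper: a block-by-block computation of $\beta$ on the basis (\ref{equ.basishmix}), using the dual basis relations for the $L$- and $L'$-blocks, the facts $u_j\in\ker\zeta_*$ and $d_M(\overline{e}_i)\in d_M(Z)$ together with (\ref{equ.dualwz}) for the mixed blocks, and the symmetry identity $d_M(v)(w)=d'_M(w)(v)$ via $j^*d_M=d'_Mj_*$ for the central block $S$, which is exactly the Gram matrix of the Novikov form on $\im j_*$ in the basis $\{e_1,\ldots,e_r\}$.
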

\begin{proof}
On $V$, we have
\begin{align*}
d_{IX} (v_i)(v_j)
&= \zeta^* d_M (\overline{e}_i)(v_j) = \zeta^* d_M (\overline{e}_i)(\eta_* \overline{e}_j) \\
&= d_M (\overline{e}_i)(j_* \overline{e}_j) = d_M (\overline{e}_i)(e_j) = d_M (\overline{e}_j)(e_i) \\
&= d_M (\overline{e}_j)(j_* \overline{e}_i) = \zeta^* d_M (\overline{e}_j)(\eta_* \overline{e}_i) \\
& = \zeta^* d_M (\overline{e}_j)(v_i) = d_{IX} (v_j)(v_i).
\end{align*}
These are the symmetric entries of $S$.
Between $V$ and $L$ we find
\[ d_{IX} (v_i)(u_j) =  \zeta^* d_M (\overline{e}_i)(u_j)= d_M (\overline{e}_i)(\zeta_* u_j)=0, \]
as $u_j \in L=\ker \zeta_*$. This agrees with
\[ d_{IX} (u_j)(v_i) = \overline{w}^j (v_i) =0, \]
by definition of the dual basis.
The intersection pairing between $V$ and $L'$ is trivial as well:
\[ d_{IX} (v_i)(\overline{w}_j) = \zeta^* d_M (\overline{e}_i)(\overline{w}_j) =
   d_M (\overline{e}_j)(\zeta_* \overline{w}_j) = d_M (\overline{e}_i)(w_j) =0, \]
since $d_M (\overline{e}_i) \subset d_M (Z)$.
This agrees with
\[ d_{IX} (\overline{w}_j)(v_i) = u^j (v_i) =0, \]
again by definition of the dual basis. On L,
\[ d_{IX} (u_i)(u_j) = \overline{w}^i (u_j)=0 \]
and on $L',$
\[ d_{IX} (\overline{w}_i)(\overline{w}_j) = u^i (\overline{w}_j) =0. \]
Finally, the intersection pairing between $L$ and $L'$ is given by
\[ d_{IX} (u_i)(\overline{w}_j) = \overline{w}^i (\overline{w}_j) = \delta_{ij} =
   u^j (u_i) = d_{IX} (\overline{w}_j)(u_i). \]
\end{proof}

Theorem \ref{HIsignature} follows readily from this proposition because
\[ \sigma (IX) = \sigma (S) + \sigma \begin{pmatrix} 0 & I \\ I & 0 \end{pmatrix} =
  \sigma (S) = \sigma (M,\partial M). \]

It remains to prove that both
\begin{equation} \label{equ.diazetadixetadmp}
\xymatrix{
\redH_m (IX) \ar[r]^{\zeta_*} \ar[d]_{d_{IX}} & H_m (M,\partial M) \ar[d]^{d'_M} \\
\redH^m (IX) \ar[r]_{\eta^*} & H^m (M)
} 
\end{equation}
and
\begin{equation} \label{equ.dianudixdkkdelta}
\xymatrix{
\redH_m (Q_{\geq k} E) \ar[r]^{\nu_*} & \redH_m (IX) \ar[d]^{d_{IX}} \\
H^{m-1} (\ftr_{<k} E) \ar[u]^{D_{k,k}} \ar[r]^{\delta^*} & \redH^m (IX)
} 
\end{equation}
commute.
We begin with diagram (\ref{equ.diazetadixetadmp}) and check the commutativity on basis elements. \\

1. We verify that $\eta^* d_{IX} (u_j) = d'_M \zeta_* (u_j)$ for all $j$. By exactness,
  $\zeta_* \eta_* i_* = j_* i_* =0$ and hence
 \[ d'_M \zeta_* (u_j) = d'_M \zeta_* \eta_* i_* (\overline{u}_j)=0. \]
So it remains to show that $\eta^* d_{IX} (u_j)=0$. We break this into three steps according to the
decomposition (\ref{equ.hmdecomp}). Evaluating on elements of the form $i_* \overline{u}_i$
yields
\[ \eta^* d_{IX} (u_j)(i_* \overline{u}_i) = (\eta^* \overline{w}^j)(i_* \overline{u}_i)
  = \overline{w}^j (\eta_* i_* \overline{u}_i) = \overline{w}^j (u_i)=0. \]
If $a$ is any element in $\ker \eta_*,$ then
\[ (\eta^* \overline{w}^j)(a) = \overline{w}^j (\eta_* a)=0. \]
Before evaluating on elements $\overline{e}_i,$ we observe that since
$\eta_* \overline{e}_i \in W$ (by (\ref{equ.etaoveiinw})) and
\[ \zeta_* (\eta_* \overline{e}_i) = j_* \overline{e}_i \in \im j_*, \]
we have
\[ \eta_* \overline{e}_i \in W \cap \zeta^{-1}_* (\im j_*) =V. \]
It follows that
\[ (\eta^* \overline{w}^j)(\overline{e}_i) = \overline{w}^j (\eta_* \overline{e}_i)=0. \]
Thus $\eta^* d_{IX} (u_j)=0$ as claimed.\\

2. On basis elements $v_j,$ the commutativity is demonstrated by the calculation
\begin{align*}
\eta^* d_{IX} (v_j)
&= \eta^* \zeta^* d_M (\overline{e}_j) = j^* d_M (\overline{e}_j) =
  d'_M j_* (\overline{e}_j) \\
&= d'_M \zeta_* \eta_* (\overline{e}_j) = d'_M \zeta_* (v_j).
\end{align*}

3. We prove that $\eta^* d_{IX} (\overline{w}_j) = d'_M \zeta_* (\overline{w}_j)$ for all $j$.
Again it is necessary to break this into three steps according to the
decomposition (\ref{equ.hmdecomp}). Evaluating on elements of the form $i_* \overline{u}_i$
yields
\[ \eta^* d_{IX} (\overline{w}_j)(i_* \overline{u}_i) = \eta^* (u^j)(i_* \overline{u}_i)
 = u^j (\eta_* i_* \overline{u}_i) = u^j (u_i) = \delta_{ij}
\]
and
\[ d'_M \zeta_* (\overline{w}_j)(i_* \overline{u}_i) = d'_M (w_j)(i_* \overline{u}_i)
 = d_M (i_* \overline{u}_i)(w_j) = w^i (w_j) = \delta_{ij}. \]
If $a$ is any element in $\ker \eta_*,$ then
\[ \eta^* (u^j)(a) = u^j (\eta_* a)=0=d_M (a)(w_j)= d'_M (w_j)(a), \]
using (\ref{equ.dualwz}) and $d_M (a) \in d_M (Z)$.
Finally, on elements $\overline{e}_i$ we find
\[ \eta^* (u^j)(\overline{e}_i) = u^j (\eta_* \overline{e}_i) = u^j (v_i)=0
 = d_M (\overline{e}_i)(w_j)= d'_M (w_j)(\overline{e}_i), \]
using (\ref{equ.dualwz}) and $d_M (\overline{e}_i) \in d_M (Z)$.
The commutativity of (\ref{equ.diazetadixetadmp}) is now established.\\

If $a\in H_m (M)$ and $b\in \redH_m (IX)$ are any elements, then using (\ref{equ.diazetadixetadmp}),
\begin{align*}
\zeta^* d_M (a)(b)
&= d_M (a)(\zeta_* b) = d'_M (\zeta_* b)(a) = (\eta^* d_{IX} b)(a) \\
&= d_{IX} (b)(\eta_* a) = d_{IX} (\eta_* a)(b),
\end{align*}
where the last equation uses the symmetry of $d_{IX},$ Proposition \ref{prop.dixsymmetric}.
Hence the diagram
\begin{equation} \label{equ.etadmzetadix} 
\xymatrix{
H_m (M) \ar[r]^{\eta_*} \ar[d]_{d_M} & \redH_m (IX) \ar[d]^{d_{IX}} \\
H^m (M,\partial M) \ar[r]^{\zeta^*} & \redH^m (IX)
} \end{equation}
commutes as well.
The cohomology braid of the triple
\[ \xymatrix{
\ftr_{<k} E \ar[r]^{F_{<k}} \ar[rd]_{\tau} & \partial M \ar[d]^i \\
& M
} \]
contains the commutative square
\begin{equation} \label{equ.deltaflkzetadelta}
\xymatrix{
H^{m-1} (\partial M) \ar[r]^{\delta^*} \ar[d]_{F^*_{<k}} & H^m (M,\partial M) \ar[d]^{\zeta^*} \\
H^{m-1} (\ftr_{<k} E) \ar[r]^{\delta^*} & \redH^m (IX).
} \end{equation}
We are now in a position to prove the commutativity of (\ref{equ.dianudixdkkdelta}).

Let $a\in H^{m-1} (\ftr_{<k} E)$ be any element.
We must show that $d_{IX} \nu_* D_{k,k} (a)=\delta^* (a)$.
As $F^*_{<k}: H^{m-1} (\partial M)\to H^{m-1} (\ftr_{<k} E)$ is surjective
(Lemma \ref{lem.flkcgkexact}), there exists an $\overline{a} \in H^{m-1} (\partial M)$
with $a = F^*_{<k} (\overline{a}).$
By Propositions \ref{prop.dob0}, \ref{prop.obzerodisdkl}, $D_{k,k}$ is the unique isomorphism such that
\[ \xymatrix{ \rCH{\partial M}{m-1} \ar[r]^{\tr{F}{k}^*} \ar[d]_{D_{\partial M}}^{\cong} & 
  \rCH{\ft{E}{k}}{m-1} \ar[d]^{D_{k,k}}_{\cong} \\ 
 \rH{\partial M}{m} \ar[r]^{\ctr{C}{k *}} & \rrH{\ctr{Q}{k}E}{m} } \]
commutes. Therefore,
\[ D_{k,k} (a) = D_{k,k} F^*_{<k} (\overline{a}) = C_{\geq k*} D_{\partial M}
 (\overline{a}). \]
Then, by the lower middle square in Diagram (\ref{equ.ijetazetanuctauf}),
\[ \nu_* D_{k,k} (a) = \nu_* C_{\geq k*} D_{\partial M} (\overline{a})
  = \eta_* i_* D_{\partial M} (\overline{a}). \]
Applying $d_{IX}$ and using the commutative diagram (\ref{equ.etadmzetadix}), 
we arrive at
\[ d_{IX} \nu_* D_{k,k} (a) = d_{IX} \eta_* i_* D_{\partial M} (\overline{a})
  = \zeta^* d_M i_* D_{\partial M} (\overline{a}). \]
Now the commutative diagram
\[ \xymatrix{
H_m (\partial M) \ar[r]^{i_*} & H_m (M) \ar[d]^{d_M} \\
H^{m-1} (\partial M) \ar[u]_{D_{\partial M}} \ar[r]^{\delta^*} &
  H^m (M,\partial M)
} \]
shows that
\[ d_{IX} \nu_* D_{k,k} (a) = \zeta^* \delta^* (\overline{a}), \]
which by Diagram (\ref{equ.deltaflkzetadelta}) equals $\delta^* F^*_{<k} (\overline{a}) = \delta^* (a),$ as
was to be shown.

\section{Sphere Bundles, Symplectic Toric Manifolds}
\label{sec.sphsymptorman}

We discuss equivariant Moore approximations for linear sphere bundles and
for symplectic toric manifolds.

\begin{proposition}\label{eulercondition}
Let $\xi = (E,\pi,B)$ be an oriented real $n$-plane vector bundle 
over a closed, oriented, connected, $n$-dimensional base manifold $B$.
Let $S(\xi)$ be the associated sphere bundle and
let $e_\xi \in H^n (B;\intg)$ be the Euler class of $\xi$.  
Then $S(\xi)$ can be given a structure group which allows for a degree $k$ equivariant Moore approximation,
for some $0<k<n,$ if and only if $e_\xi =0$.
\end{proposition}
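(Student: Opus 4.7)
My plan is to prove the two implications separately; both rest on standard tools. The forward direction uses classical obstruction theory for sections of sphere bundles, while the reverse direction combines the fiberwise truncation of Section~\ref{fiberwise} with the Gysin sequence of the oriented sphere bundle.

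For $(\Leftarrow)$, I would use that since $\dim B=n$ matches the rank of $\xi$, the Euler class is the only obstruction to a nowhere-zero section of $\xi$; thus $e_\xi=0$ produces a section $s:B\to S(\xi)$, which reduces the structure group of $S(\xi)$ to $G=\SO(n-1)\subset\SO(n)$, the stabilizer of a basepoint $p\in S^{n-1}$. Under this reduced action $\{p\}$ is a $G$-fixed subspace, so the inclusion $\{p\}\hookrightarrow S^{n-1}$ is $G$-equivariant. Since $H_r(\{p\};\rat)=0$ for $r\geq 1$, and since $H_0(\{p\};\rat)\cong \rat\cong H_0(S^{n-1};\rat)$ while $H_r(S^{n-1};\rat)=0$ for $0<r<n-1$, this inclusion meets Definition~\ref{EqMooreApx} and is a $G$-equivariant Moore approximation of every degree $k$ with $0<k<n$.

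For $(\Rightarrow)$, assume a $G$-equivariant Moore approximation $f_{<k}:L_{<k}\to S^{n-1}$ exists for some structure group $G$ on $S(\xi)$ and some $0<k<n$. Because $H_r(S^{n-1};\rat)$ is concentrated in degrees $0$ and $n-1$ and $k\leq n-1$, the defining properties of $f_{<k}$ force $H_0(L_{<k};\rat)\cong\rat$ and $H_r(L_{<k};\rat)=0$ for every $r>0$; so $L_{<k}$ is path-connected and rationally acyclic. The fiberwise truncation then yields a fiber bundle $\tr{\pi}{k}:\ft{S(\xi)}{k}\to B$ with rationally acyclic fiber, and the induced $G$-action on the one-dimensional $H^0(L_{<k};\rat)$ is automatically trivial. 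The Serre spectral sequence with rational coefficients collapses at $E_2$ and delivers an isomorphism $\tr{\pi}{k}^{\,*}:H^*(B;\rat)\xrightarrow{\cong} H^*(\ft{S(\xi)}{k};\rat)$.

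Since $\tr{\pi}{k}=\pi\circ \tr{F}{k}$, the composition $\tr{F}{k}^{\,*}\circ\pi^*$ is an isomorphism in degree $n$; in particular $\pi^*:H^n(B;\rat)\to H^n(S(\xi);\rat)$ is injective. The Gysin sequence of the oriented $S^{n-1}$-bundle $\pi:S(\xi)\to B$ provides the exact segment
\[ H^0(B;\rat)\xrightarrow{\,\cup e_\xi\,} H^n(B;\rat)\xrightarrow{\,\pi^*\,} H^n(S(\xi);\rat), \]
so injectivity of $\pi^*$ forces $e_\xi=1\cup e_\xi=0$ in $H^n(B;\rat)$. Because $B$ is closed, connected and oriented of dimension $n$, $H^n(B;\intg)\cong\intg$ embeds into $H^n(B;\rat)$, giving $e_\xi=0$ integrally. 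The only subtle input is the dimension matching $\dim B=n$ in the forward direction, which is what makes the Euler class a \emph{complete} obstruction to a section; everything else reduces to spectral sequence bookkeeping, and I anticipate no essential technical difficulty beyond this.
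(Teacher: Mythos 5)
Your proof is correct, and while your $(\Leftarrow)$ direction coincides with the paper's (use $e_\xi=0$ to get a nowhere-zero section, split off a trivial line, reduce the structure group to $\{1\}\times\SO(n-1)$, and take a fixed point $p\in S^{n-1}$ as the truncation in every degree $0<k<n$), your $(\Rightarrow)$ direction takes a genuinely different route. The paper argues via signatures: it forms the double $X=DE\cup_{SE}DE$, regards it as a two-strata pseudomanifold with bottom stratum the zero section, invokes Novikov additivity and Corollary \ref{IHsignature} to conclude $\sigma_{IH}(TE)=\sigma(X)=0$, and then identifies the middle-degree intersection form of the Thom space with the self-intersection $[B]\cdot[B]$, i.e.\ the Euler number, handling odd $n$ separately through the $2$-torsion of the Euler class. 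You instead observe that any degree-$k$ truncation of $S^{n-1}$ with $0<k<n$ is rationally acyclic and path-connected, so the truncated bundle has the rational cohomology of $B$ (Serre spectral sequence with one nonzero row), whence $\pi^*$ is injective on $H^n(B;\rat)$ and the Gysin sequence forces the rational Euler class to vanish; torsion-freeness of $H^n(B;\intg)\cong\intg$ then gives $e_\xi=0$ integrally. Your argument is more elementary and uniform in $n$ (no parity split, no intersection-homology or signature input beyond the fiberwise truncation construction of Section \ref{fiberwise}), whereas the paper's proof deliberately routes the obstruction through the signature machinery it has just developed, illustrating the theme that signatures obstruct fiberwise homology truncation; both are complete proofs of Proposition \ref{eulercondition}.
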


\begin{proof}
Assume that $S(\xi)$ can be given a structure group which allows for a degree $k$ 
equivariant Moore approximation for some $0<k<n.$
If the fiber dimension $n$ of the vector bundle is odd, then the Euler class has order two. Since
$H^n (B;\intg)\cong \intg$ is torsion free, $e_\xi =0$. Thus we may assume
that $n=2d$ is even. We form the double
\[ X^{4d} = DE \cup_{SE} DE, \]
where $DE$ is the total space of the disk bundle of $\xi$, and $SE = \partial DE$.
Then $X$ is a manifold, but we may view it as a 2-strata pseudomanifold $(X,B)$ by taking
$B\subset X$ to be the zero section in one of the two copies of $DE$ in $X$.
For this stratified space, $M=DE$, $\partial M =SE,$ and $\hat{M} = TE,$ the Thom-space
of $\xi$. Since the double of any manifold with boundary is nullbordant, the signature of $X$
vanishes, $\sigma_{IH} (X)=\sigma (X)=0$. 
Note that a degree $k$ equivariant Moore approximation to $S^{n-1},$ some $0<k<n,$
is in particular an equivariant Moore approximation of degree
$\lfloor \frac{1}{2} (\dim S^{n-1} +1) \rfloor = \lfloor \frac{n}{2} \rfloor$.
Thus by Corollary \ref{IHsignature},
\[ \sigma_{IH} (TE)  = \sigma_{IH} (X) =0. \]
The middle intersection homology of the Thom space of a vector bundle is given by
\[ IH_n (TE) \cong \im (H_n (DE)\to H_n (DE,SE)), \]
\cite[p. 77, Example 5.2.5.3]{KW}. By homotopy invariance $H_n (DE)\cong H_n (B)\cong \rat [B],$
and by the Thom isomorphism $H_n (DE,SE) \cong H_0 (B)\cong \rat.$
The intersection form on the, at most one-dimensional, image is determined by the self-intersection number
$[B]\cdot [B]$ of the fundamental class of $B$, which is precisely the Euler number.
Since $\sigma_{IH} (TE)=0$, this self-intersection number, and thus $e_\xi$, must vanish.
(Note that in this case, the map $H_n (DE)\to H_n (DE,SE)$ is the zero map and
$IH_n (TE)=0$, for $IH_n (TE)\cong \rat$ and $[B]\cdot [B]=0$ would contradict the
nondegeneracy of the intersection pairing.)

Conversely, if $e_{\xi}=0$, then \cite[Thm. 2.10, p. 137]{HI} asserts that $\xi$ has a 
nowhere vanishing section. This section induces a splitting $\xi \cong \xi' \oplus \underline{\real}^1,$
where $\xi'$ is an $(n-1)$-plane bundle and $\underline{\real}^1$ denotes the trivial line bundle
over $B$. This splitting reduces the structure group from $\SO (n)$ to
$\SO (1)\times \SO (n-1) = \{ 1 \} \times \SO (n-1)$. The action of this reduced structure group
on $S^{n-1}$ has two fixed points; let $p \in S^{n-1}$ be one of them. Then 
$\{ p \} \hookrightarrow S^{n-1}$ is an $\{ 1 \} \times \SO (n-1)$-equivariant Moore approximation
for every degree $0<k<n$.
\end{proof}

\begin{example}\label{Toric Manifold}
A symplectic toric manifold is a quadruple $\left(M,\omega,T^n,\mu\right)$, where $M$ is a $2n$-dimensional, 
compact, symplectic manifold with non-degenerate closed $2$-form $\omega$, there is an effective 
Hamiltonian action of the $n$-torus $T^n$ on $M$, and $\mu:M\rightarrow \R^n$ is a choice of moment 
map for this action.  There is a one-to-one correspondence between such $2n$-dimensional 
symplectic toric manifolds and so-called
Delzant polytopes in $\R^n$, \cite{DE}, given by the assignment
	$$\left(M,\omega,T^n,\mu\right) \mapsto \Delta_M := \mu\left(M\right).$$
Recall that a polytope in $\R^n$ is the convex hull of a finite number of points in $\R^n$.
Delzant polytopes in $\R^n$ have the property that each vertex has exactly $n$ edges adjacent to it and
for each vertex $p$, every edge adjacent to $p$ has the form
$\{ p+tu_i ~|~ T_i \geq t\geq 0 \}$ with $u_i \in \intg^n,$ and $u_1,\ldots, u_n$ constitute a
$\intg$-basis of $\intg^n$.

Section $3.3$ of \cite{dS} uses the Delzant polytope $\Delta_M$ to construct Morse functions on $M$ 
as follows:
Let $X\in \R^n$ be a vector whose components are independent over $\rat$.
Then $X$ is not parallel to any facet of $\Delta_M$ and the orthogonal projection
$\pi_X: \real^n \to \real$ onto the line spanned by $X$,
$\pi_X (Y) = \langle Y,X \rangle,$ is injective on the vertices of $\Delta_M$.
By composing the moment map 
$\mu$ with the projection $\pi_X,$  
one obtains a Morse function $f_X =\pi_X \circ \mu : M \rightarrow \R$, $f_X (q) = \langle \mu (q),X\rangle,$
whose critical points are precisely the fixed points of the $T^n$ action.
The images of the fixed points under the moment map are
the vertices of $\Delta_M$. Since the coadjoint action is trivial on a torus,
$T^n$ acts trivially on $\R^n$, and as $\mu$ is 
equivariant, it is thus constant on orbits.
Hence the level sets of $\pi_X\circ \mu$ are $T^n$-invariant. The index of a critical point 
$p$ is twice the number of edge vectors $u_i$ of $\Delta_M$ at $\mu (p)$ whose inner product 
with $X$ is negative,
$\langle u_i , X \rangle < 0$. In particular, the index is always even.
For $a\in \real,$ we set $M_{a} = f_X^{-1} (-\infty,a] \subset M$.

Suppose that one can choose $X$ in such a way that the critical points satisfy:
\begin{enumerate}
\item[(C)] For any two critical points $p,q$ of $f_X$, if the index of $p$ is larger than 
the index of $q$, then $f_X (p) > f_X (q)$.
\end{enumerate}

Then, since $f_X$ is Morse, for each critical value $a$ of $f_X$ the set 
$M_{a+\epsilon}$ is homotopy equivalent to a CW-complex with one cell 
attached for each critical point $p$ with $f_X (p) < a+\epsilon$.  
(Here $\epsilon >0$ has been chosen so small that there are no critical values of $f_X$
in $(a,a+\epsilon]$.) The dimension of the cell 
associated to $p$ is the index of $f_X$ at $p$.
Let $2i$ be the index of any critical point $p\in M_{a+\epsilon}$ with $f_X (p)=a$.
If $q\in M_{a+\epsilon}$ is an arbitrary critical point of $f_X$, then
$f_X (q)\leq f_X (p)=a$ and thus the index of $q$ is at most $2i$ by condition (C).
Thus $M_{a+\epsilon}$ contains all cells of $M$ that have dimension at most $2i$ and no other cells.
Since $M$ has only 
cells in even dimensions, the cellular chain complex of $M$ has zero differentials in all degrees.  
Thus, since $f_X$ is equivariant, $M_{a+\epsilon} \hookrightarrow M$ is a $T^n$-equivariant 
Moore approximation of degree 
$2i+1$ (and of degree $2i+2$), and is a smooth manifold with boundary.

A particular case of this is complex projective space
$(\cp^n, \omega_{\operatorname{FS}}, T^n, \mu)$, where $\omega_{\operatorname{FS}}$
is the Fubini-Study symplectic form and $T^n$ acts on $\cp^n$ by
\[ (e^{it_1}, \ldots, e^{it_n})\cdot (z_0: z_1: \cdots :z_n) =
     (z_0: e^{it_1} z_1: \cdots : e^{it_n} z_n). \]
On page $26$ of \cite{MI2}, an equivariant Morse function with $n+1$ critical points is constructed,
the $i$-th one having index $2i$ and critical value $i$.
Using this we obtain equivariant Moore approximations 
to $\cp^n$ of every degree with respect to the torus action.

In the case that $M$ is $4$-dimensional, condition (C) is satisfied. 
The Delzant polytope $\mu (M)$ associated to a $4$-dimensional symplectic toric manifold 
$(M,\omega,T^2,\mu)$ is a $2$-dimensional polytope in $\R^2$.  
As $M$ is compact, $f_X$ attains its minimum $m$ and its maximum $m'$ on $M$.
Let $p_{\min} \in M$ be a critical point with $f_X (p_{\min})=m$ and let
$p_{\max} \in M$ be a critical point with $f_X (p_{\max})=m'$.
Suppose that $p\in M$ is any critical point such that $f_X (p)=m$.
Then $\pi_X \mu (p)=m=\pi_X \mu (p_{\min})$. The moment images
$v=\mu (p)$ and $v_{\min} = \mu (p_{\min})$ are vertices of $\Delta_M$.
Since the projection $\pi_X$ is injective on vertices, we have $v=v_{\min}$.
Now as $\mu$ maps the fixed points (which are precisely the  critical points)
bijectively onto the vertices, it follows that $p=p_{\min}$. This shows
that $p_{\min}$ is unique and similarly $p_{\max}$ is unique.
The index of $p_{\min}$ is $0$, while the index of $p_{\max}$ is $4$.
Thus $\langle u_1, X \rangle \geq 0$ and $\langle u_2, X \rangle \geq 0$ at $v_{\min}$
and
$\langle u_1, X \rangle <0$ and $\langle u_2, X \rangle <0$ at $v_{\max}$.
Geometrically, this means that the two edges that go out from $v_{\min}$ point
in the same half-plane as $X$, while the outgoing edges at $v_{\max}$ point
in the half-plane complementary to the one of $X$.
If $v$ is any vertex of the moment polytope different from $v_{\min}, v_{\max}$,
then by the convexity of $\Delta_M,$ one of the two outgoing edges must point
in $X$'s half-plane, while the other outgoing edge points into the complementary half-plane,
yielding an index of $2$.
If $p\in M$ is a critical point different from $p_{\min}, p_{\max},$ then
$\mu (p)$ is a vertex different from $v_{\min}, v_{\max}$ and thus must have
index $2$. From this, it follows that condition (C) is indeed satisfied:
If $p,q$ are critical points such that $p$ has larger index than $q$, then there
are two cases: $p$ has index $4$ and $q$ has index in $\{0,2 \}$, or
$p$ has index $2$ and $q$ has index $0$. In the first case, $p=p_{\max}$ and
in the second case $q = p_{\min}$. In both cases it is then clear, using the uniqueness
of $p_{\min}, p_{\max},$ that 
$f_X (p) > f_X (q)$. We have thus shown:
\begin{proposition} \label{prop.toricmfddim4}
Every $4$-dimensional symplectic toric manifold $(M,\omega,T^n,\mu)$ 
has an equivariant Moore approximation $\tr{M}{k}$ of degree $k$ for every $k\in\Z$. 
Furthermore, the space $\tr{M}{k}$ can be chosen to be a smooth compact 
codimension $0$ submanifold-with-boundary of $M$.
\end{proposition}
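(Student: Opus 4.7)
The plan is to apply the general Morse-theoretic machinery outlined in the preceding discussion of Example \ref{Toric Manifold}, whose output is: if one can choose a generic direction $X \in \real^n$ so that the critical points of $f_X = \pi_X \circ \mu$ satisfy condition (C), then the sublevel sets $M_{a+\epsilon}$ yield $T^n$-equivariant Moore approximations in all degrees. So the entire task reduces to verifying (C) in the four-dimensional case, namely: whenever a critical point $p$ has strictly larger Morse index than another critical point $q$, one has $f_X(p) > f_X(q)$.

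First, I would fix $X \in \real^2$ with $\rat$-independent coordinates, so $\pi_X$ is injective on the (finite) vertex set of the Delzant polytope $\Delta_M \subset \real^2$ and $f_X$ is Morse. The critical points of $f_X$ correspond bijectively (via $\mu$) to the vertices of $\Delta_M$, and their indices are $2 \cdot \#\{ i : \langle u_i, X\rangle < 0 \}$, where $u_1, u_2$ are the edge vectors at the vertex. Since $M$ is compact, $f_X$ achieves a unique minimum $p_{\min}$ and unique maximum $p_{\max}$: uniqueness follows because $\pi_X$ is injective on vertices, so $\mu(p_{\min})$ and $\mu(p_{\max})$ are the unique vertices of $\Delta_M$ of minimal and maximal $\pi_X$-value. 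At $v_{\min} = \mu(p_{\min})$ both edge vectors lie in the closed half-plane $\{\langle \cdot, X\rangle \geq 0\}$ (yielding index $0$) and at $v_{\max}$ both lie in the open complementary half-plane (yielding index $4$).

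The key geometric step is to show that every other vertex $v$ of $\Delta_M$ has exactly one outgoing edge in each of the two open half-planes determined by $X^\perp$, forcing its index to be $2$. This is where I expect the main subtlety: it is essentially a convexity argument. If both outgoing edges at $v$ lay in the same closed half-plane, then $\Delta_M$ would be locally contained (near $v$) in that half-plane, and by convexity the whole polytope would lie in that half-plane, forcing $v$ to be either $v_{\min}$ or $v_{\max}$; the $\rat$-independence of the coordinates of $X$ (so that no edge is parallel to $X^\perp$) rules out boundary cases.

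With this in hand, (C) is immediate by case analysis: if $\mathrm{ind}(p) > \mathrm{ind}(q)$, then either $\mathrm{ind}(p)=4$, forcing $p=p_{\max}$, or $\mathrm{ind}(q)=0$, forcing $q=p_{\min}$; in either case $f_X(p) > f_X(q)$ by the uniqueness of the extrema. One then invokes the Morse-theoretic argument already spelled out to identify $M_{a+\epsilon} \hookrightarrow M$ as an equivariant Moore approximation of degree $2i+1$ (equivalently $2i+2$) for each $i$, with $M_{a+\epsilon}$ a smooth compact codimension-$0$ submanifold-with-boundary obtained as a regular sublevel set of the smooth $T^2$-invariant function $f_X$. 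The degrees $k \leq 0$ and $k$ exceeding the top nonvanishing homology dimension are covered by Example \ref{trivialexamples}.
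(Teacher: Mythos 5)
Your proposal is correct and follows essentially the same route as the paper: verify condition (C) in dimension $4$ by using injectivity of $\pi_X$ on the vertices of $\Delta_M$ to get unique critical points of index $0$ and $4$, use convexity of the Delzant polytope (with $\langle u_i,X\rangle\neq 0$ guaranteed by the rational independence of the coordinates of $X$) to force index $2$ at every other vertex, and then feed this into the already-established Morse-theoretic construction of equivariant Moore approximations by regular sublevel sets, handling $k\leq 0$ and large $k$ trivially. Your contrapositive convexity argument is just a slightly more explicit phrasing of the paper's step that at a non-extremal vertex the two outgoing edges must point into opposite half-planes.
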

\end{example}

\end{document}